\newcommand{\mc}[1]{{\mathcal #1}}
\newcommand{\bb}[1]{{\mathbb #1}}
\font \mymathbb = bbold10 at 11pt
\newcommand{\N}{\mathbb{N}}
\newcommand{\Mit}{\mathfrak{M}_\kappa}
\newcommand{\cMit}{\widehat{\mathfrak{M}}_\kappa}
\newcommand{\bMit}{\overline{\mathfrak{M}}_\kappa}
\newcommand{\sMit}{\sigma_{\mathfrak{M}}} 
\newcommand{\ind}{\mbox{\mymathbb{1}}}
\newcommand{\fl}[1]{\lfloor{#1}\rfloor} 
\renewcommand{\tilde}{\widetilde}
\DeclareMathOperator{\Var}{Var}
\DeclareMathOperator{\Prob}{\mathbb{P}}
\renewcommand{\P}{\Prob}
\DeclareMathOperator{\E}{\mathbb{E}}
\newcommand{\bPP}{\ensuremath{\overline{\mathbb{P}}}}
\newcommand{\bEE}{\ensuremath{\overline{\mathbb{E}}}}
\def\e{\varepsilon}
\newcommand\ud{\textrm{d}}
\newcommand\veps{\varepsilon}
\begin{document}



\section{Introduction}

Random walks in cooling random environments (RWCRE) are a model of random walks in dynamic random environments introduced by Avena and den Hollander in \cite{AveHol19}. 
This is a model for random motion in an inhomogeneous environment which experiences ``shocks" at certain prescribed times when the entire environment is resampled. 
By adjusting the sequence of times when the environment is resampled, the RWCRE model interpolates between that of random walk in a random environment (RWRE) 
where the environment is ``frozen" and never resampled 
and that of a simple random walk.\footnote{if the environment is resampled on each step, then it is easy to see that the annealed distribution of the RWCRE is the same as that of a simple random walk.} 
The adjective ``cooling" was attached to the model because if the gaps  between successive resamplings of the environment increases without bound then the environment is becoming more and more ``frozen" as time goes on, and in this case we might expect the walk to retain some of the strange asymptotic behaviors of a RWRE. 

Naturally, the behavior of a RWCRE depends on both the distribution $\mu$ that the environments are sampled from and the sequence of times $\tau = \{\tau(n)\}_{n\geq 1}$ (called the cooling map) at which the environment is resampled. Previous results \cite{ACdCdH20,ACP21} have shown that not only can one retain some of the characteristics of a RWRE by choosing a rapidly growing cooling sequence, but that the model of RWCRE can exhibit new limiting distributions (such as tempered stable distributions or sums of independent copies of Kesten-Sinai random variables) which do not occur in either the RWRE model or simple random walks .

The study of limiting distributions of RWCRE has been divided according to the type of the limiting distribution for the RWRE model with distribution $\mu$ on environments. The case where the corresponding RWRE model is recurrent was studied in \cite{AveHol19} and \cite{ACdCdH20}. 
The limiting distributions for transient RWRE are characterized by a parameter $\kappa > 0$ which depends on the distribution $\mu$ on environments \cite{KKS75}. 
The limiting distributions for the cases $\kappa > 2$ (the diffusive, Gaussian regime) and $\kappa \in (1,2)$ (the ballistic, stable regime) were studied in \cite{ACdCdH20} and \cite{ACP21}, respectively. 
In the present paper, we give the limiting distributions for RWCRE in the cases $\kappa \in (0,1)$ (the sub-ballistic, non-stable regime) and $\kappa = 2$ (the Gaussian, super-diffusive regime). The only remaining case ($\kappa=1$) is left for a future work. 
Our main result in the case $\kappa \in (0,1)$ gives examples of new limiting distributions (sums of independent Mittag-Leffler distributions), while our results in the case $\kappa=2$ show that the limiting distribution is always Gaussian but with a non-trivial scaling factor that depends very delicately on the specifics of the cooling map. 

In the remainder of the introduction we will briefly recall the model of one-dimensional RWRE as well as some of the relevant results that we will use. 
Then we will introduce the model of RWCRE and state our main results on the limiting distributions in the cases $\kappa \in (0,1)$ and $\kappa=2$. In the process of proving the limiting distributions for the RWCRE we also obtain some new results for RWRE which may be of independent interest. We state some of these new RWRE results in the introduction as well. 

\subsection{RWRE}
Here we will give a brief overview of the model of one-dimensional RWRE. The interested reader can see the lecture notes of Zeitouni \cite{Z04} or the various references below for more details.

Throughout the paper we use the notation
$\bb{N}_0 := \bb{N} \cup \{0\}$ for the set of non-negative integers.
The classical one-dimensional (static) RWRE model is defined as
follows.  Let $\omega=(\omega(x),x\in\bb{Z}) \in [0,1]^\bb{Z}$ be a
one dimensional environment.  The \emph{random walk in environment}
$\omega$ starting from $z \in \bb{Z}$ is the probability law
$P^\omega_{z}$ on the space of trajectories $\bb{Z}^{\bb{N}_0}$ which
corresponds to the Markov chain $Z = (Z_n)_{n\in\bb{N}_0}$ on $\bb{Z}$
with initial condition $z$ and transition probabilities
\[
P^{\omega}_z(Z_{n+1} = x + e \mid Z_n = x) 
= \left\{
\begin{array}{ll}
\omega(x) &\mbox{ if } e = 1,\\  
1 - \omega(x) &\mbox{ if } e = - 1,
\end{array}
\right. 
\qquad n \in \bb{N}_0.
\]
Let $\mc{G}$ be the sigma algebra on the space of trajectories
$\bb{Z}^{\bb{N}_0}$. By the monotone class theorem, one can verify the
measurability of the map $\omega \mapsto P_z^\omega (G)$ for any
$G \in \mc{G}$ and $z \in \bb{Z}$.  Thus, for any probability measure $\mu$ on the space $[0,1]^\bb{Z}$ of environments we can define the
probability measure $P^\mu_x$ on $\bb{Z}^{\bb{N}_0}$ as the semi-direct product
$P^\mu_x(\cdot):= \mu \ltimes P_z^\omega(\cdot) = \int P_z^\omega(\cdot) \mu (\ud
\omega)$. 
The stochastic process $Z = (Z_n)_{n\in \bb{N}_0}$ is called a RWRE and the distributions $P^\omega_x(\cdot)$ and $P^\mu_x(\cdot)$ are referred to as the \emph{quenched} and \emph{annealed} laws of the RWRE, respectively.

A standard assumption on the distribution $\mu$ on environments, which we will also make here, is that the environments are i.i.d. 
That is,
\begin{equation}\label{iid}
\mu = \alpha^{\bb{Z} },
\end{equation} 
for some probability distribution $\alpha$ on $[0,1]$.
We write $\langle\cdot\rangle$ to denote the expectation w.r.t.\ $\alpha$. 
A crucial quantity to characterize the asymptotic properties of RWRE
is the ratio of the transition probabilities to the left and to the
right at the origin
$\rho_0 = \frac{1 - \omega_0}{\omega_0}$.
For the remainder of the paper, we assume that
\begin{equation}\label{trans}
\langle \log \rho_0\rangle <0,
\end{equation}
which, as shown by Solomon~\cite{S75}, guarantees {\bf right
  transience} for the RWRE; that is, $P^\mu_0( \lim_{n\to\infty} Z_n = \infty ) = 1$.
In addition to \eqref{iid} and \eqref{trans}, we will also assume the conditions on the distribution $\mu$ given by the following definition. 
\begin{definition}[{\bf $\kappa$-regular measures}] \rm{}
  \label{goodenvironment}
  We say that a measure $\mu$ on environments is $\kappa$-\emph{regular} for some $\kappa>0$ if it satisfies \eqref{iid}, \eqref{trans}, the distribution of $\log \rho_0$ is non-lattice, 
\begin{equation}\label{kappadef}
  \langle \rho_0^\kappa\rangle = 1,
\end{equation}  
 and $\langle \rho_0^{\kappa+\e} \rangle < \infty$ for some $\e>0$. 
\end{definition}

\begin{remark}
Since the moment generating function $M(t) = \langle e^{t \log \rho_0} \rangle = \langle \rho_0^t \rangle $ is convex, if \eqref{trans} holds 
then under mild additional assumptions
then there is a (unique) $\kappa>0$ such that 
\eqref{kappadef} holds. 
The additional technical condition $\langle \rho_0^{\kappa+\e} \rangle < \infty$ can be seen as a sort of mild ellipticity condition. 
For some results that we will use, such as the limiting distributions from \cite{KKS75}, a weaker ellipticity condition $\langle \rho_0^\kappa (\log \rho_0)_+ \rangle < \infty$ is sufficient. However, the stronger ellipticity condition $\langle \rho_0^{\kappa+\e} \rangle < \infty$ is needed for the precise large deviation results from \cite{BD18} that will be instrumental in our analysis (see \eqref{prectail-speedcenter}  below). 
\end{remark}

The parameter $\kappa$ given by \eqref{kappadef} characterizes a number of aspects of the asymptotic behavior of the RWRE. 
For instance, 
since the convexity of $t \mapsto \langle \rho_0^t \rangle $ implies that $\kappa > 1$ if and only if $\langle \rho_0 \rangle < 1$, Solomon's LLN for RWRE in \cite{S75} can be written as 
\begin{equation}
  \label{speed-limit}
  \lim_n \frac{Z_n}{n} = v =
  \begin{cases}
    0 & \text{if }\kappa \leq 1 \\
    \frac{1 - \langle \rho_0\rangle}{1 + \langle \rho_0\rangle}&
    \text{if } \kappa > 1,
  \end{cases} \qquad P^\mu_0\text{-a.s.}
\end{equation}
That is, the RWRE is sub-ballistic ($v=0$) if $\kappa \in (0,1]$ and ballistic ($v>0$) if $\kappa > 1$. 
The parameter $\kappa$ also appears in many other results for one-dimensional RWRE, including the characterization of the limiting distributions for transient RWRE \cite{KKS75} and identifying the correct sub-exponential rate of decay for certain large deviation probabilities \cite{DPZ96,GZ98,FGP10,AP16,BD18}. In this paper we will be concerned only with the cases when $\kappa \in (0,1)$ and $\kappa=2$, so next we will recall some of the limiting distribution and large deviation results that are known for these cases.

\subsubsection{The sub-ballistic, non-stable case: \texorpdfstring{$\kappa \in (0,1)$}{kappa in (0,1)}}
The following theorem states the limiting distribution proved in \cite{KKS75} for RWRE in the regime $\kappa \in (0,1)$.
We refer to this as the sub-ballistic, non-stable regime because the walk has limiting speed $v=0$ by \eqref{speed-limit} and the limiting distribution is non-stable -- in contrast to the case $\kappa=1$ where the  walk is sub-ballistic and the limiting distribution is a $1$-stable distribution. 
We note that here and throughout the paper we will use $\Longrightarrow$ to denote convergence in distribution.

\begin{theorem}[\cite{KKS75}]\label{thm:Zn-sub}
 Let $(Z_n)_{n \in \bb{N}_0}$ be a RWRE with distribution $\mu$ on environments that is $\kappa$-regular with $\kappa \in (0,1)$. 
There is a constant $b>0$ such that
under the annealed law
\begin{equation}\label{Znldk1}
 \frac{Z_n}{n^\kappa} \underset{n\to\infty}{\Longrightarrow} \Mit, 
\end{equation}
where $\Mit$ is a non-negative random variable with Laplace transform
\begin{equation}\label{Mkdef}
E[ e^{-\lambda \Mit} ] = \sum_{n=0}^\infty \frac{(-b\lambda)^n}{\Gamma(1+\kappa n)}, \quad \lambda > 0.  
\end{equation}
\end{theorem}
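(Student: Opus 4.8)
\medskip
\noindent\emph{Proof sketch.} The plan is to transfer the statement to the hitting times of the walk and then invert. For $m\in\bb{N}$ set $T_m:=\inf\{k\ge0:Z_k=m\}$; by \eqref{trans} and \eqref{speed-limit} the walk is transient to the right with zero speed, so $T_m<\infty$ $P^\mu_0$-a.s.\ and $T_m/m\to\infty$. Let $M_n:=\max_{0\le k\le n}Z_k$, so that $\{M_n\ge m\}=\{T_m\le n\}$ exactly, and recall the standard fact that $M_n-Z_n=o(n^\kappa)$ in $P^\mu_0$-probability (the walk has backtracked only $O(\log n)$ below its running maximum by time $n$, since the depths of its leftward excursions have exponentially decaying tails). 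It therefore suffices, in order to prove \eqref{Znldk1}, to show
\begin{equation}\label{pp:Tm}
\frac{T_m}{m^{1/\kappa}}\underset{m\to\infty}{\Longrightarrow}\mc{S}
\end{equation}
for some strictly positive $\kappa$-stable random variable $\mc{S}$: taking $m=\fl{xn^\kappa}$ in $\{M_n\ge m\}=\{T_m\le n\}$ and using \eqref{pp:Tm} together with $n/m^{1/\kappa}\to x^{-1/\kappa}$ gives, for every $x>0$,
\[
P^\mu_0\!\Bigl(\tfrac{M_n}{n^\kappa}\ge x\Bigr)=P^\mu_0\bigl(T_{\fl{xn^\kappa}}\le n\bigr)\longrightarrow P\bigl(\mc{S}\le x^{-1/\kappa}\bigr)=P\bigl(\mc{S}^{-\kappa}\ge x\bigr),
\]
and hence $Z_n/n^\kappa\Longrightarrow\mc{S}^{-\kappa}=:\Mit$ (the limiting law has a continuous distribution function, so every $x>0$ is a continuity point).

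\medskip
\noindent To establish \eqref{pp:Tm} I would use the regeneration structure of the walk. Let $0=\sigma_0<\sigma_1<\sigma_2<\cdots$ be the regeneration times --- times at which $Z$ reaches a running maximum that it never afterwards steps to the left of --- and let $\eta_j:=Z_{\sigma_j}$ be the regeneration levels, so $\sigma_j=T_{\eta_j}$. Under $P^\mu_0$ the pairs $\bigl(\eta_{j+1}-\eta_j,\,\sigma_{j+1}-\sigma_j\bigr)_{j\ge1}$ are i.i.d., the level increments having finite (indeed exponential) moments; writing $W_j:=\sigma_{j+1}-\sigma_j$ and $\nu(m):=\max\{j:\eta_j\le m\}$,
\[
T_m=\sigma_1+\sum_{j=1}^{\nu(m)-1}W_j+\bigl(T_m-\sigma_{\nu(m)}\bigr),
\]
with the two boundary terms negligible at scale $m^{1/\kappa}$. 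The renewal law of large numbers gives $\nu(m)/m\to1/\bar d$ $P^\mu_0$-a.s., where $\bar d:=E[\eta_2-\eta_1]\in(0,\infty)$, so by the stable limit theorem for i.i.d.\ sums with a random, stabilising number of summands, \eqref{pp:Tm} reduces to the tail estimate
\begin{equation}\label{pp:tail}
P^\mu_0(W_1>t)\sim C_0\,t^{-\kappa},\qquad t\to\infty,
\end{equation}
for some $C_0\in(0,\infty)$: since $\kappa\in(0,1)$, this makes $W_1$ non-integrable and forces $m^{-1/\kappa}\sum_{j\le\nu(m)}W_j\Longrightarrow\mc{S}$ with $\mc{S}$ a positive $\kappa$-stable random variable satisfying $E[e^{-\lambda\mc{S}}]=e^{-a\lambda^\kappa}$, $a:=C_0\Gamma(1-\kappa)/\bar d>0$ (no centering is needed when $\kappa<1$).

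\medskip
\noindent The tail estimate \eqref{pp:tail} is the technical heart of the argument, and the step I expect to be the main obstacle. Up to multiplicative factors that are bounded above and below, $W_1$ equals the number of nearest-neighbour steps performed during a regeneration block, which is controlled by the number of leftward steps there; organised by level, this count is described by a branching process in the random environment $(\rho_{\eta_1},\rho_{\eta_1+1},\dots)$ with geometric offspring laws --- the classical Kesten--Kozlov--Spitzer representation of hitting times --- and its total progeny is in turn comparable to the random series $\sum_{i\ge0}\prod_{j=0}^{i}\rho_j$, which solves an affine recursion with random multiplier $\rho_0$. The key input is then Kesten's renewal theorem for such recursions: because $\log\rho_0$ is non-lattice, $\langle\rho_0^\kappa\rangle=1$ by \eqref{kappadef}, and $\langle\rho_0^{\kappa+\e}\rangle<\infty$, the stationary solution has tail $\sim C_0't^{-\kappa}$ with $C_0'\in(0,\infty)$; transferring this through the geometric (hence tail-index preserving) comparisons back to $W_1$ yields \eqref{pp:tail}. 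One must also check that the boundary blocks $\sigma_1$ and $T_m-\sigma_{\nu(m)}$ are genuinely negligible and that the dependence between $\nu(m)$ and the $W_j$ does not disturb the limit --- both routine once \eqref{pp:tail} is in hand.

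\medskip
\noindent It remains to identify the limit law. By the above, $\Mit\overset{d}{=}\mc{S}^{-\kappa}$ with $E[e^{-\lambda\mc{S}}]=e^{-a\lambda^\kappa}$, so writing $\mc{S}\overset{d}{=}a^{1/\kappa}\mc{S}_1$ where $(\mc{S}_s)_{s\ge0}$ is the standard $\kappa$-stable subordinator ($E[e^{-\lambda\mc{S}_s}]=e^{-s\lambda^\kappa}$) and letting $N_1:=\sup\{s\ge0:\mc{S}_s\le1\}$ be its inverse, the self-similarity $\mc{S}_s\overset{d}{=}s^{1/\kappa}\mc{S}_1$ gives $P(N_1>s)=P(\mc{S}_s\le1)=P(\mc{S}_1\le s^{-1/\kappa})$, whence $N_1\overset{d}{=}\mc{S}_1^{-\kappa}$ and $\Mit\overset{d}{=}a^{-1}N_1$. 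The moments of the inverse stable subordinator are classically $E[N_1^n]=n!/\Gamma(1+\kappa n)$, and the associated power series has infinite radius of convergence, so with $b:=1/a>0$,
\[
E[e^{-\lambda\Mit}]=\sum_{n=0}^\infty\frac{(-\lambda)^n}{n!}\,E\bigl[(a^{-1}N_1)^n\bigr]=\sum_{n=0}^\infty\frac{(-b\lambda)^n}{\Gamma(1+\kappa n)},\qquad\lambda>0,
\]
which is exactly \eqref{Mkdef}.
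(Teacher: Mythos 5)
Your sketch is correct in outline, but note that the paper does not prove this statement at all: Theorem \ref{thm:Zn-sub} is quoted from \cite{KKS75}, and the only original content is the remark that the Mittag--Leffler form \eqref{Mkdef} of the limit is equivalent to the $(S)^{-\kappa}$ form of \cite{KKS75} by \cite{Fel71}[Section XIII.8]. What you have done is reconstruct both ingredients. For the limit law itself you follow the classical route (invert the hitting times $T_m$, prove a positive $\kappa$-stable limit for $T_m/m^{1/\kappa}$, deduce $Z_n/n^\kappa \Rightarrow \mathcal{S}^{-\kappa}$ using that the backtracking below the running maximum is $O(\log n)$), but phrased through regeneration blocks rather than the original branching-process decomposition of \cite{KKS75}; this is the modern formulation, and the single heavy input you isolate --- the tail $\bPP(R_2-R_1>t)\sim C t^{-\kappa}$, coming from Kesten's renewal theorem for the random difference equation driven by $\rho_0$ under \eqref{kappadef} and the non-lattice condition --- is exactly the fact the paper itself records as \eqref{eq:reg_times_heavy}, so within the paper's framework your reduction is complete modulo cited results, and your Anscombe-type handling of the random index $\nu(m)$ and the boundary blocks is standard. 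Your identification step, writing $\mathcal{S}^{-\kappa}$ as a multiple of the inverse stable subordinator at time $1$ and summing the moments $n!/\Gamma(1+\kappa n)$, is a clean self-contained derivation of what the paper obtains by citing Feller, and the Laplace transform you land on matches \eqref{Mkdef} with $b=1/a$. The only caveat is that the tail estimate you call the technical heart is asserted (via Kesten's theorem and the geometric comparison of regeneration-time increments to the total progeny of the branching process) rather than proved; that is precisely the content of \cite{KKS75}, so your sketch is honest about where the real work lies, but it is not an independent proof of it.
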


The characterization of the limiting distribution $\Mit$ in \eqref{Mkdef} 
is quite  different from what is given in \cite{KKS75}. Indeed, in \cite{KKS75} the limiting distribution is 
of the form $(S)^{-\kappa}$ where $S$ is a $\kappa$-stable which has Laplace transform $E[e^{-\lambda S}] = e^{-c \lambda^\kappa}$ for some $c>0$. 
However, as can be seen from \cite{Fel71}[Section XIII.8] these two characterizations are equivalent. 
Since the Laplace transform in \eqref{Mkdef} can be written as $\mathcal{E}_\kappa(-b\lambda)$ where 
$\mathcal{E}_\kappa(z) = \sum_{n=0}^\infty \frac{z^n}{\Gamma(1+\kappa n)}$ is the Mittag-Leffler function with parameter $\kappa$, 
we say that $\Mit$ is a Mittag-Leffler random variable. 

\begin{remark}
 There is another family of non-negative random variables which also bear the name ``Mittag-Leffler." These are non-negative random variables $Y_\kappa$ with cumulative distribution functions given by $P(Y_\kappa \leq x) = 1 - \mathcal{E}_\kappa(-x^\kappa)$, for $x\geq 0$, and have Laplace transform $E[e^{-\lambda Y_\kappa}] = \frac{1}{1+\lambda^\kappa}$. 
 To distinguish these two families, the random variables $Y_\kappa$ are said to have Mittag-Leffler distribution \emph{of the first kind}, whereas $\Mit$ are said to have Mittag-Leffler distribution \emph{of the second kind}. 
 Since we will only be concerned with the Mittag-Leffler distributions of the second kind in this paper, we will omit the descriptor ``of the second kind" when referring to $\Mit$ for the remainder of the paper. 
\end{remark}

\subsubsection{The Gaussian, non-diffusive case: \texorpdfstring{$\kappa = 2$}{kappa equals 2}}

The limiting distribution results in \cite{KKS75} show that RWRE with $\kappa$-regular distributions $\mu$ have Gaussian limiting distributions only when the parameter $\kappa \geq 2$. However, the limiting distribution has diffusive $\sqrt{n}$ scaling only when $\kappa>2$ whereas the case $\kappa=2$ has non-diffusive scaling $\sqrt{n\log n}$. 
The following theorem collects this limiting distribution result and some large deviation results that we will use in the remainder of the paper. 

\begin{theorem}\label{thm:Zn-bor}
 Let $(Z_n)_{n \in \bb{N}_0}$ be a RWRE with distribution $\mu$ on environments that is $\kappa$-regular with $\kappa = 2$. Then, the following results hold. 
 \begin{description}
 \item[Limiting distribution \cite{KKS75}] There is a constant $b>0$ such that under the annealed law
 \begin{equation} \label{gaus-static-scaling}
 \frac{Z_n - nv}{b\sqrt{n\log n}} \underset{n\to\infty}{\Longrightarrow} \Phi, 
\end{equation}
where $\Phi$ is a standard Gaussian random variable. 
 \item[Large deviations \cite{CGZ00}] The sequence of random variables $\{Z_n/n\}_{n \geq 1}$ satisfies a large deviation principle with speed $n$ and good, convex rate function $I_\mu(x)$ with 
the property that $I_\mu(x)>0 \iff x \notin [0,v]$.
In particular, for any $\epsilon>0$ there is a constant $C_\epsilon>0$ such that 
\[
P^\mu_0( Z_n - nv > \epsilon n ) \leq e^{-C_\epsilon n}
\quad \text{and} \quad 
P^\mu_0( Z_n < -\epsilon n) \leq e^{-C_\epsilon n},  
\]
for all $n$ large enough. 
 \item[Moderate and large deviation slowdowns \cite{BD18}] There is a constant $K_0>0$ such that 
\begin{equation}\label{prectail-speedcenter}
 \lim_{n\to\infty} \sup_{ \sqrt{n} \log^3 n \leq x \leq nv - \log n } \left| \frac{P^\mu_0(Z_n - nv < -x)}{(nv-x)x^{-2}} - K_0 \right| = 0. 
\end{equation}
 \end{description}
\end{theorem}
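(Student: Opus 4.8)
Each of the three statements is drawn from the cited literature, so the plan is to assemble them in the stated form with only light bookkeeping. \textbf{Limiting distribution.} The natural route, the one taken in \cite{KKS75}, is via the hitting times $T_m := \inf\{k : Z_k = m\}$. Under \eqref{trans} the walk admits a regeneration structure under which $T_n$ equals, up to a negligible boundary term, a sum of $n$ i.i.d.\ terms whose common law has a tail of the form $C\,t^{-\kappa}(1+o(1))$, with $C$ governed by $\langle \rho_0^\kappa \rangle = 1$ and the non-lattice assumption. For $\kappa = 2$ these terms have finite mean but their truncated second moment diverges logarithmically, so $T_n$ satisfies the central limit theorem with the non-standard norming $\sqrt{n\log n}$ characteristic of the boundary case $\kappa = 2$ of the stable limit theorems for i.i.d.\ sums. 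Inverting through $\{Z_n < m\} = \{T_m > n\}$ and the monotonicity of $m \mapsto T_m$ then converts the Gaussian limit for $T_n$ into \eqref{gaus-static-scaling}, with $b$ determined by $v$ and the tail constant; I would simply cite \cite{KKS75} and record this translation.

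\textbf{Large deviations.} Here I would quote the annealed LDP of \cite{CGZ00}: $\{Z_n/n\}$ satisfies an LDP at speed $n$ with a convex good rate function $I_\mu$ whose zero set is exactly $[0,v]$ (with $v > 0$ since $\kappa = 2 > 1$, by \eqref{speed-limit}; the vanishing of $I_\mu$ throughout $[0,v]$ encodes the merely polynomial cost of slowdowns caused by strong traps). The two exponential estimates then follow from the LDP upper bound applied to the closed sets $\{x \geq v+\epsilon\}$ and $\{x \leq -\epsilon\}$, both disjoint from $[0,v]$: this gives
\[
\limsup_{n\to\infty}\frac{1}{n}\log P^\mu_0(Z_n - nv > \epsilon n) \leq -\inf_{x \geq v+\epsilon} I_\mu(x) < 0
\]
together with the analogous bound for the left tail, hence the claimed $C_\epsilon$ for all large $n$.

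\textbf{Moderate and large deviation slowdowns.} The estimate \eqref{prectail-speedcenter} is exactly the type of sharp slowdown asymptotic established in \cite{BD18} under the strengthened ellipticity $\langle \rho_0^{\kappa+\e}\rangle < \infty$: the dominant contribution to $\{Z_n - nv < -x\}$ comes from the walk being delayed by a single atypically strong trap, and a precise renewal analysis of the potential $\sum \log \rho_i$ simultaneously produces the polynomial prefactor $(nv-x)x^{-2}$ and the constant $K_0$, uniformly over $x$ in a polynomially wide window around $nv$ that reaches down toward the diffusive scale. I would specialize that statement to $\kappa = 2$ and rewrite it as \eqref{prectail-speedcenter}, the only checks being that $\sqrt{n}\log^3 n \leq x \leq nv - \log n$ lies within the uniformity window of \cite{BD18} and that the normalization of the constant matches.

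The only genuine difficulty is notational consistency — making the hitting-time inversion of the first part and the normalization conventions of the third part agree with \cite{KKS75} and \cite{BD18}, so that $b$ in \eqref{gaus-static-scaling} and $K_0$ in \eqref{prectail-speedcenter} are pinned down unambiguously. No new probabilistic argument is required here; the substantive use of these three inputs is deferred to the later analysis of RWCRE.
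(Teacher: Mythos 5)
Your proposal is correct and matches what the paper does: Theorem \ref{thm:Zn-bor} is stated as a compilation of known results, attributed directly to \cite{KKS75}, \cite{CGZ00}, and \cite{BD18}, with no proof given beyond those citations. Your sketches of the hitting-time inversion, the LDP upper bound on the closed sets $\{x\geq v+\epsilon\}$ and $\{x\leq-\epsilon\}$, and the specialization of the Buraczewski--Dyszewski slowdown asymptotics to $\kappa=2$ are exactly the standard translations implicit in the paper's citations.
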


\subsection{RWCRE}
In recent years there has been an interest in studying random walks in
\emph{dynamical} random environments; that is, environments which
change over time. A number of results have been able to prove central
limit theorems (Gaussian limiting distributions under diffusive
scaling) by assuming either that the environment has fast enough time
dynamics, see for example \cite{ABF18,BZ06,BMP00}, or by working in a perturbative regime of
certain model parameters where one can prove that the walk moves fast
enough to essentially escape the space time correlations of the
environment, see for example \cite{HdHdSST15,HKT20,HS15}. In contrast, the model of random walks
in cooling random environments first introduced in \cite{AveHol19}
gives a model of a dynamic environment where the time dynamics of the
environment can be made slow enough to retain some of the interesting
effects of the non-Gaussian limiting distributions that one sees with
RWRE.

A random walk in a cooling random environment (RWCRE) is a random walk in a
\emph{space-time} random
environment built by partitioning $\N_0$ into a sequence of intervals,
and assigning independently to each interval an environment sampled
from $\mu$.  Formally, let $(T_k)_{k\in\N}$ be an increment sequence
such that $T_k\in\N$. We will refer to this sequence as {\bf cooling
  increment sequence}.  We denote further by
$\tau(k):=\sum_{i=1}^{k}T_i $ the $k$-th cooling time, i.e. the time
at which a new environment is freshly sampled from $\mu$.  We will
refer to $\tau$ as the {\bf cooling map}. For convenience of notation we will let $\tau(0) = 0$ so that $T_k = \tau(k)-\tau(k-1)$ for all $k\geq 1$.

The RWCRE $(X_n)_{n\in\N_0}$ is defined as follows. 
Let $\bar{\omega} = (\omega^{(k)})_{k\geq 1}= \left( (\omega^{(k)}(x))_{x
  \in\bb{Z}} \right)_{k\geq 1}$ be an i.i.d.\ sequence of environments with
$\omega^{(k)} \sim \mu$. The RWCRE $X$ then starts at $X_0=0$ and evolves on each interval $[\tau(k-1),\tau(k))$ as a random walk in the environment $\omega^{(k)}$. More precisely, given a sequence of environments $\bar\omega$ and the cooling sequence $\tau$ we define the \textbf{quenched} law $P^{\bar\omega,\tau}(\cdot)$ of the RWCRE as that of a (time inhomogeneous) Markov chain with transition probabilities given by 
\[
  \label{cooling_kernel}
P^{\bar{\omega},\tau}(X_{n+1} = x + e \mid X_n = x)=
 \left\{
\begin{array}{ll}
\omega^{(k)}(x) &\mbox{ if } e = 1,\\  
1 - \omega^{(k)}(x) &\mbox{ if } e = - 1,
\end{array}
\right.
\qquad \text{if } \tau(k-1)\leq n < \tau(k). 
\]
The \textbf{annealed} law $\P^{\mu,\tau}(\cdot)$ of the RWRE is then obtained by averaging the quenched law with respect to the measure $\mu^{\N}$ on the sequence of environments $\bar{\omega}$. That is, 
\[
\bb{P}^{\mu,\tau}(\cdot) :=  \mu^{\bb{N}} \ltimes  P_0^{\bar{\omega},\tau}(\cdot) = \int P_0^{\bar{\omega},\tau}(\cdot) \, \ud \mu^{\bb{N}}(\bar{\omega}),
\]
Because we will always be discussing the RWCRE for a fixed distribution $\mu$ and cooling map $\tau$, in a slight abuse of notation we will simply use $\P$ in place of $\P^{\mu,\tau}$ for the annealed law of the RWCRE for the remainder of the paper.

\begin{remark}
Throughout the paper, we will use the following representation of the RWCRE. Let $(Z^{(k)})_{k\geq 1} = \left( (Z^{(k)}_n)_{n\geq 0} \right)_{k\geq 1}$ be a sequence nearest neighbor random walks that are i.i.d.\ with distribution $Z^{(k)} \sim P^\mu_0$; that is each $Z^{(k)}$ is an independent copy of a RWRE with distribution $\mu$ on the environment.
For a fixed $n\geq 1$ 
Let 
\[
\ell_n := \sup\{\ell : \, \tau(\ell) \leq n\} 
\]
be the number of resamplings of the environment by time $n$. 
With this notation, it is easy to see that (under the annealed measure $\P$ for the RWCRE)
\begin{equation}\label{basicdec0}
 X_n \overset{\text{Law}}{=} \sum_{k = 1}^{\ell_
n} Z^{(k)}_{T_k} + Z^{(\ell_n + 1)}_{n-\tau(\ell_n)}, \qquad n \geq 0. 
\end{equation}
(by convention if $\ell_n=0$ then the empty sum on the right side is zero so that $X_n \overset{\text{Law}}{=} Z^{(1)}_{n}$.) 
At times it will be convenient to have some notation to rewrite the right side of \eqref{basicdec0} as a single summation term. To this end, we can write 
\begin{equation}\label{basicdec}
X_n 
\overset{\text{Law}}{=} \sum_k Z^{(k)}_{T_{k,n}}, 
\qquad \text{where} \quad  
T_{k,n} = 
\begin{cases}
 T_k & \text{if } k\leq \ell_n \\
 n-\tau(\ell_n) & \text{if } k = \ell_n + 1 \\
 0 & \text{if } k > \ell_n+1. 
\end{cases}
\end{equation}
\end{remark}

The main results of this paper concern the limiting distributions for the RWCRE. Naturally the limiting distribution depends both on the distribution $\mu$ on environments and the cooling map $\tau$, but it is natural to separate the analysis according to type of the limiting distribution for the RWRE with environment distribution $\mu$. 
In this paper we will be concerned with the cases where $\mu$ is $\kappa$-regular with either $\kappa \in (0,1)$ or $\kappa = 2$. Before stating the results we obtain in these cases, however, we will first review some of the limiting distributions for RWCRE that have already been obtained for other regimes of RWRE.

\noindent\textbf{The Sinai regime: ``$\kappa=0$".\footnote{Since the parameter $\kappa>0$ characterizes the limiting distributions of transient RWRE, in a slight abuse of notation we will refer to the recurrent regime for RWRE (i.e., where $\mu$ is such that $\langle \log \rho_0 \rangle=0$) as having parameter $\kappa=0$.}} 
For recurrent RWRE, Sinai proved the limiting distribution $\frac{Z_n}{(\log n)^2} \Rightarrow V$, where $V$ is a non-Gaussian random variable that can be represented as a functional of a standard Brownian motion \cite{Sinai}.  The limiting distributions for RWCRE for $\mu$ in the Sinai regime were studied first in \cite{AveHol19} for a few special cases of cooling maps and then later in \cite{ACdCdH20} for general cooling maps. The results of \cite{ACdCdH20} showed that all subsequential limits 
of $\frac{X_n-\E[X_n]}{\sqrt{\Var(X_n)}}$
are either Gaussian, sums of independent copies of the random variable $V$, or an independent mixture of Gaussian and sums of independent copies of $V$ (the limiting distribution can depend both on the cooling map $\tau$ and the subsequence $n_j\to\infty$).
Functional limit laws for a few special cases of cooling maps were also obtained in \cite{Yon19}.

\noindent\textbf{The diffusive Gaussian regime: $\kappa>2$.} 
When $\mu$ is $\kappa$-regular with $\kappa>2$ then a CLT-like limiting distribution holds: $\frac{Z_n-nv}{b \sqrt{n}} \Rightarrow \Phi$ for some $b>0$, where $\Phi$ is a standard Gaussian random variable \cite{KKS75}. For $\mu$ in this regime it was shown that for \emph{any} cooling map $\tau$ the limiting distribution for the RWCRE is $\frac{X_n-\E[X_n]}{\sqrt{\Var(X_n)}} \Rightarrow \Phi$. 

\noindent\textbf{The ballistic, stable regime: $\kappa \in (1,2)$.}
When $\mu$ is $\kappa$-regular with $\kappa \in (1,2)$, the limiting distributions for RWRE are of the form $\frac{Z_n-nv}{n^{1/\kappa}} \Rightarrow \mathcal{S}_\kappa$, where $\mathcal{S}_\kappa$ is a $\kappa$-stable that is totally skewed to the left and has mean zero \cite{KKS75}.
Limiting distributions for RWCRE with $\mu$ in this regime were studied in \cite{ACP21} where sufficient conditions were given on the cooling map $\tau$ which lead to limiting distributions for the RWCRE which are (1) Gaussian, (2) $\kappa$-stable of the type $S_\kappa$, (3) generalized tempered $\kappa$-stable, or (4) a mixture of independent random variables of the first three types. 

The main results of the paper concern the limiting distributions of the RWCRE in the cases where the distribution $\mu$ on environments is $\kappa$-regular with either $\kappa \in (0,1)$ or $\kappa = 2$. 
Since both the results and the methods of proof are very different in these two cases we will state our results in each case separately.

\subsubsection{Limiting distributions for the case \texorpdfstring{$\kappa \in (0,1)$}{kappa in (0,1)}}

The decomposition of the RWCRE in \eqref{basicdec} as a sum of increments of independent copies of a RWRE, together with the fact that limiting distributions are known for the RWRE, suggests that one might be able to approximate the distibution of $X_n$ by an appropriate linear combination of independent copies of the limiting distribution of the RWRE (which in the case $\kappa \in (0,1)$ is a Mittag-Leffler random variable $\Mit$). We will refer to this general approach to proving a limiting distribution for the RWCRE as the \textbf{replacement method} approach.
Previous results for RWCRE have shown that the replacement method works sometimes (e.g., for the cases when the RWRE is either recurrent or $\kappa$-regular with $\kappa>2$, \cite{ACdCdH20}) but not always (e.g., when the RWRE is $\kappa$-regular with $\kappa \in (1,2)$, \cite{ACP21}). Our main result for the case $\kappa \in (0,1)$ is that the replacement method does indeed work for this case. 

To prepare for the statement of our main results in this case, note that using \eqref{basicdec} we can rewrite the normalized RWCRE as 
\begin{equation}
 \frac{X_n - \E[X_n]}{\sqrt{\Var(X_n)}} 
 \overset{\text{Law}}{=} \sum_k \frac{Z^{(k)}_{T_{k,n}} - E_0^\mu[Z_{T_{k,n}}]}{\sqrt{\Var(X_n)}} 
 = \sum_{k} \lambda_{\tau,n}(k) \frac{Z^{(k)}_{T_{k,n}} -
   E_0^\mu[Z_{T_{k,n}}]}{\sqrt{\Var(Z_{T_{k,n}})}},
 \label{rm-dec}
\end{equation}
where the coefficients in the last line are given by the vector $\lambda_{\tau,n} = (\lambda_{\tau,n}(k))_{k\geq 1}$ with 
\begin{equation}
    \label{eq:lambda_defi}
    \lambda_{\tau,n}(k)
    =  \sqrt{\frac{\Var(Z_{T_{k,n}})}{\Var(X_n)}}.
\end{equation}
The terms of the vector $\lambda_{\tau,n}$ reflect the relative weight that each 
term in the sum in \eqref{rm-dec}
contributes to the distribution of $X_n$. If the terms of the vector $\lambda_{\tau,n}$ converge to zero uniformly, then it is natural to expect that the limiting distribution of $X_n$ will be Gaussian. On the other hand, if some terms of $\lambda_{\tau,n}$ remain bounded away from zero then we expect the Mittag-Leffler random variables $\Mit$ to appear in the limiting distribution. To make this precise, and state our main results in the case $\kappa \in (0,1)$ we need to first introduce some notation. 

Let $\ell^{2} = \{ \mathbf{x} \in \mathbb{R}^\N : \sum_{k\geq 1} x(k)^2 < \infty \}$ be the collection of square summable sequences, and note that $\lambda_{\tau,n} \in \ell^{2}$ since $\sum_{k\geq 1} \lambda_{\tau,n}(k)^2 = 1$. For any non-negative sequence $\mathbf{x} \in \ell^{2}$ there exists a unique non-increasing sequence $\mathbf{x}^\downarrow \in \ell^{2}$ that is a re-ordering of the terms of $\mathbf{x}$.\footnote{That is $\mathbf{x}^\downarrow = (x^\downarrow(k))_{k\geq 1}$ is the unique element of $\ell^{2}$ such that 
$x^\downarrow(\cdot) = x(\pi(\cdot))$ 
for some bijection $\pi: \N \to \N$ and such that $x^\downarrow(k)\geq x^\downarrow(k+1)$ for all $k\geq 1$.}
Finally, for any random variable $Z$ with finite variance let $\widehat{Z} = \frac{Z-E[Z]}{\sqrt{\Var(Z)}}$ denote the normalized version of $Z$ and for any $\mathbf{x} \in \ell^{2}$ let 
\[
 \left( \widehat{Z} \right)^{\otimes \mathbf{x}} = \sum_{k\geq 1} x(k) \widehat{Z}_k, 
\]
where $\widehat{Z}_1,\widehat{Z}_2,\ldots$ are i.i.d.\ copies of the random variable $\widehat{Z}$. 

Having introduced the necessary notation, we are now ready to state our main result in the case $\kappa \in (0,1)$ which says that subsequential limits of RWCRE when $\kappa \in (0,1)$ are sums of independent Mittag-Leffler random variables, Gaussian random variables, or a mixture of the two. 

\begin{theorem}\label{thm:mit_mixtures}
 Let $X_n$ be a RWCRE with $\kappa$-regular distribution $\mu$ with $\kappa \in (0,1)$ and cooling map $\tau$. 
 Assume that $n_j \to \infty$ is a subsequence such that 
 $\lim_{j\to\infty} \lambda_{\tau,n_j}^\downarrow(k) = \lambda_*(k)$ for all $k\geq 1$ for some $\lambda_* \in \ell^{2}$. 
 Then, 
\begin{equation}
 \label{Lawsub}
 \frac{X_{n_j}-\E[X_{n_j}]}{\sqrt{\Var(X_{n_j})}}  
 \underset{j\to\infty}{\Longrightarrow}
\left( \cMit \right)^{\otimes \lambda_*}
+ a(\lambda_*)\Phi,
\end{equation}
where $a(\lambda_*) := \left( 1 - \sum_k \lambda^2_*(k)\right)^{1/2} \in [0,1]$  and
  $\Phi$ is a standard Gaussian random variable independent from
  $\left( \cMit \right)^{\otimes \lambda_*}$.  Moreover, the
  convergence in law also holds in $L^p$ for all $p>0$.
\end{theorem}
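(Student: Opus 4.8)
The plan is to establish the convergence in \eqref{Lawsub} via the decomposition \eqref{rm-dec}, treating it as a triangular array of independent summands, and then upgrade the convergence in law to $L^p$ by a uniform integrability argument. Write $W_{n,k} := \lambda_{\tau,n}(k)\,\widehat{Z}^{(k)}_{T_{k,n}}$, so that the normalized RWCRE equals $\sum_k W_{n,k}$ in law, with the $W_{n,k}$ independent across $k$ for fixed $n$, each centered, and $\sum_k \Var(W_{n,k}) = \sum_k \lambda_{\tau,n}(k)^2 = 1$. The first step is to reorder: since the summands are independent and the sum is invariant in law under permutation, we may replace $\lambda_{\tau,n_j}$ by its decreasing rearrangement $\lambda_{\tau,n_j}^\downarrow$ and work along the subsequence where $\lambda_{\tau,n_j}^\downarrow(k)\to\lambda_*(k)$ for each $k$. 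Fix a large integer $M$. The summands with index $k\le M$ must be handled individually: for each such $k$, $\lambda_{\tau,n_j}^\downarrow(k)\to\lambda_*(k)$, and one needs the corresponding time parameter $T_{k,n_j}$ (after reordering) to tend to infinity along the subsequence whenever $\lambda_*(k)>0$ — because $\Var(Z_m)$ grows like $m^{2\kappa}$ up to slowly varying corrections (a consequence of Theorem~\ref{thm:Zn-sub} together with the $L^p$ convergence that accompanies it, which we will need to cite or record as a preliminary), a bounded-away-from-zero weight forces a large time. Granting that, Theorem~\ref{thm:Zn-sub} gives $\widehat{Z}_{T_{k,n_j}} \Longrightarrow \cMit$, so the first $M$ terms jointly converge to $\sum_{k\le M}\lambda_*(k)\,\cMit_k$ with i.i.d. $\cMit_k$.

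The second step is to show that the tail $\sum_{k>M} W_{n_j,k}$ is, after the limit $j\to\infty$ and then $M\to\infty$, a Gaussian of variance $1 - \sum_{k\le M}\lambda_*(k)^2 \to a(\lambda_*)^2$. This is a Lindeberg-type CLT for the tail array: the relevant variance is $\sigma_{n_j,M}^2 := \sum_{k>M}\lambda_{\tau,n_j}(k)^2 = 1 - \sum_{k\le M}\lambda_{\tau,n_j}^\downarrow(k)^2$, which converges to $1-\sum_{k\le M}\lambda_*(k)^2$; and the Lindeberg condition holds because each individual weight in the tail is at most $\lambda_{\tau,n_j}^\downarrow(M+1)$, which converges to $\lambda_*(M+1)$, a quantity that tends to $0$ as $M\to\infty$ since $\lambda_*\in\ell^2$. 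One subtlety: the Lindeberg condition also needs control of $E[\widehat{Z}_m^2 \,\ind\{|\widehat{Z}_m| > \varepsilon/\lambda\}]$ uniformly in the time parameter $m$; this follows once we know the family $\{\widehat{Z}_m^2\}_{m\ge 1}$ is uniformly integrable, which is exactly the kind of statement the $L^p$ convergence in Theorem~\ref{thm:Zn-sub} (or a moment bound $\sup_m E[|\widehat{Z}_m|^{2+\delta}]<\infty$) provides. With the first $M$ terms and the tail controlled and independent of each other, a standard $\varepsilon/M$ argument (convergence of characteristic functions, then let $M\to\infty$) yields \eqref{Lawsub}.

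The third and final step is the $L^p$ upgrade. Convergence in distribution plus uniform integrability of $\big|\frac{X_{n_j}-\E[X_{n_j}]}{\sqrt{\Var(X_{n_j})}}\big|^p$ for every $p$ gives convergence in $L^p$. Uniform integrability will follow from a uniform moment bound $\sup_j E\big[\big|\frac{X_{n_j}-\E[X_{n_j}]}{\sqrt{\Var(X_{n_j})}}\big|^{q}\big] < \infty$ for $q > p$. Since the normalized RWCRE is a sum of independent centered terms with total variance $1$, Rosenthal's inequality bounds its $q$-th moment by a constant times $1 + \sum_k \lambda_{\tau,n_j}(k)^q \, E[|\widehat{Z}_{T_{k,n_j}}|^q]$, and using $\lambda_{\tau,n_j}(k)\le 1$ together with $\sum_k \lambda_{\tau,n_j}(k)^2 = 1$ and a uniform bound $\sup_m E[|\widehat{Z}_m|^q]<\infty$ (again from the $L^p$ convergence in Theorem~\ref{thm:Zn-sub}, or from Kesten's renewal-theoretic tail estimates for the hitting times), this is bounded uniformly in $j$.

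I expect the main obstacle to be the step that is logically prior to everything else: establishing that $\widehat{Z}_m$ converges to $\cMit$ \emph{together with convergence of second (and higher) moments}, equivalently that $\Var(Z_m)$ is regularly varying of index $2\kappa$ and that $\{\widehat{Z}_m^2\}$ is uniformly integrable. Theorem~\ref{thm:Zn-sub} as stated only gives convergence in law of $Z_n/n^\kappa$ with no moment information — indeed $\Mit$ itself has heavy tails and infinite moments of order $\ge 1/\kappa$ when... no, $\Mit$ has all moments, but the point is that $\widehat{Z}_m$ involves the \emph{centered and variance-normalized} walk, and matching $\Var(Z_m)$ to a clean power of $m$ requires the precise tail asymptotics of the regeneration/hitting-time structure (Kesten's lemma on $P^\mu_0(\rho_0 \cdots \rho_n > t)$ and its consequences for $E^\mu_0[Z_m]$ and $\Var(Z_m)$). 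This is presumably one of the ``new results for RWRE" the introduction alludes to, and I would isolate it as a separate proposition — roughly, that there is a slowly varying $L$ with $E^\mu_0[Z_m] \sim c_1 m^\kappa L(m)$ and $\Var(Z_m)\sim c_2 m^{2\kappa}L(m)^2$, and $\sup_m E[|\widehat Z_m|^q]<\infty$ for all $q<\infty$ — and prove it first; once that is in hand, the array argument above is routine.
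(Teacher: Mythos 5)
Your proposal is correct, and it rests on exactly the same key lemma as the paper: the upgrade of the KKS limit to $L^p$ convergence (Theorem \ref{thm:lp_cov}), which yields $\E[Z_m]\sim\mu_{\mathfrak{M}}m^\kappa$, $\Var(Z_m)\sim\sMit^2 m^{2\kappa}$ and $\sup_m \E\big[|\widehat{Z}_m|^q\big]<\infty$; you rightly isolate this as a prior proposition (and, because of these clean asymptotics, no slowly varying correction actually appears). Where you genuinely differ is in how the sum \eqref{rm-dec} is split. The paper thresholds on the size of the cooling increment: terms with $T_{k,n}\le J$ are compared to i.i.d.\ copies of $\cMit$ via the classical CLT (both produce the same Gaussian), terms with $T_{k,n}>J$ are coupled to copies of $\cMit$ with an $L^2$ error vanishing as $n\to\infty$, $J\to\infty$, so that everything reduces to the weighted i.i.d.\ sums $(\cMit)^{\otimes \lambda_{\tau,n_j}}$, whose limits are then identified with a cut point $K_j\to\infty$ and the Lindeberg CLT for the Mittag--Leffler tail. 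You instead threshold on the rank of the weight: the top $M$ rearranged weights converge to $\lambda_*(1),\dots,\lambda_*(M)$ (and, as you note, a positive limiting weight forces $T\to\infty$ because $\Var(X_{n_j})\to\infty$ while $\Var(Z_m)$ stays bounded for bounded $m$), and the low-weight tail is treated by a Lindeberg argument applied directly to the walk increments, which needs uniform integrability of $\{\widehat{Z}_m^2\}_{m\ge 1}$ --- supplied by the same uniform moment bounds, so your route trades the paper's coupling/replacement step for this uniform-in-$m$ control. One caution you only implicitly address: for fixed $M$ the Lindeberg condition for the tail fails in general (the maximal tail weight tends to $\lambda_*(M+1)>0$), so you must use a quantitative characteristic-function version of Lindeberg's estimate and take the double limit $j\to\infty$ then $M\to\infty$; the paper avoids this by letting its cut point $K_j$ grow with $j$. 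Your $L^p$ upgrade via Rosenthal's inequality is a compact substitute for the paper's binomial-expansion moment bound and uses the same three ingredients: independence and centering, $\sum_k \lambda_{\tau,n}(k)^\ell \le 1$ for $\ell\ge 2$, and the uniform moments of $\widehat{Z}_m$.
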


\begin{remark}
 While Theorem \ref{thm:mit_mixtures} shows that subsequential limiting distributions can be mixtures of Mittag-Leffler and Gaussian distributions, only the Gaussian distributions can arise as limits of the full sequence.
  To see this, first note that if $ \mathbf{x}, \mathbf{y} \in \ell^2$ and
  $\mathbf{x}^\downarrow \neq \mathbf{y}^\downarrow$ then the random
  variables
  $\left( \cMit \right)^{\otimes \mathbf{x}} + a(\mathbf{x})\Phi $ and
  $\left( \cMit \right)^{\otimes \mathbf{y}} + a(\mathbf{y})\Phi $ are
  distinct. This can be seen by examining the Laplace transform of the
  above random variables see also the proof of Lemma 1 in \cite[\S
  3.2]{ACdCdH20}. Now, since from every sequence
  $(\lambda_n, n \in \bb{N})$ in $\ell^2$ one can extract a
  subsequence $n_j$ for which
  $(\lambda_{n_j}^\downarrow(k), n \in \bb{N})$ converges for all
  $k \in \bb{N}$, it follows from \eqref{Lawsub} that the limit laws
  of the sequence
  $\bigg(\frac{X_{n}-\E[X_{n}]}{\sqrt{\Var(X_{n})}}, n \in \bb{N}
  \bigg)$ are in correspondence with the limit points of the sequence
  $(\lambda_{\tau,n}^\downarrow, n \in \bb{N})$ given by
  $\{\lambda_* \in \ell^2: \lim_n \lambda_{\tau,n}^\downarrow(k) =
  \lambda_*(k) \text{ for all } k \in \bb{N}\}$.
  Moreover, the full sequence
  $ \bigg(\frac{X_{n}-\E[X_{n}]}{\sqrt{\Var(X_{n})}}, n \in \bb{N}
  \bigg)$ converges if and only if the sequence $\lambda_{\tau,n}^\downarrow$
  admits a unique limit $\lambda_*$. 
  This happens if and only if
  $\lambda_*(k) = 0$ for all $k \in \bb{N}$ (or equivalently
  $\lim_k \lambda_{\tau,\tau(k)}(k) = 0$) since the vector $\lambda_{\tau,n}$ records the proportion of the variance coming from each cooling interval up to time $n$.
\end{remark}

\begin{remark}
  Theorem \ref{thm:mit_mixtures} shows that any subsequential limit of
  the RWCRE must be a random variable of the form in the right side of
  \eqref{Lawsub} for some $\lambda_*$. Indeed, using a diagonalization
  argument it is easy to see that for any subsequence there is always
  a further subsequence so that $\lambda_{\tau,n_j}^\downarrow(k)$
  converges for all $k$ (and the limiting vector $\lambda_*$ must be
  in $\ell^2$ by Fatou's Lemma).  Moreover, we give explicit examples
  in Section \ref{sec:ex} which show that that the limit can be a pure
  Gaussian ($a(\lambda^*) = 1$), a pure mixture of centered
  Mittag-Leffler random variables ($a(\lambda_*) = 0$) or a mixture of
  the two ($a(\lambda_*) \in (0,1)$).
Finally, it is a natural question as to whether or not 
there are restrictions on what mixtures of Mittag-Leffler and Gaussian random variables one can obtain as subsequential limits of RWCRE when $\kappa \in (0,1)$. 
In Example \ref{ex:arbMLGmix} answer this question by giving an algorithm which shows that for 
\emph{any} non-negative $\lambda_* \in \ell^2$ with $\sum_k \lambda_*(k)^2 \leq 1$ we can construct a cooling map $\tau$ and a subsequence $n_j$ such that \eqref{Lawsub} holds. 
\end{remark}

Theorem \ref{thm:mit_mixtures} as stated is quite general. However, to check the convergence of $\lambda^{\downarrow}_{\tau,n}$ along some subsequence one needs control on the variance of the corresponding RWRE. 
The following theorem, which is the key to the proof of
Theorem \ref{thm:mit_mixtures},
also provides the necessary asymptotics on the variance to be able to identify the (possibly subsequential) limiting distributions for specific choices of cooling maps $\tau$.

\begin{theorem}[\textbf{RWRE $L^p$ Convergence $\kappa\in(0,1)$}]\label{thm:lp_cov}
Let $Z=(Z_n)_{n\geq 0}$ be a RWRE with distribution $\mu$ on environments that is $\kappa$-regular with $\kappa \in (0,1)$. 
Then the convergence in distribution in \eqref{Znldk1} also holds in $L^p$ for all $p>0$. 
In particular $E_0^\mu[Z_n] \sim \mu_{\mathfrak{M}} n^\kappa$ and $\Var(Z_n) \sim \sMit^2 n^{2\kappa}$ as $n\to\infty$, where
\[
\mu_{\mathfrak{M}} := E[\Mit] = \frac{b}{\Gamma(1+\kappa)} 
\quad\text{and}\quad 
\sMit^2 := \Var(\Mit) = b^2 \left( \frac{2}{\Gamma(1+2\kappa)} - \frac{1}{\Gamma(1+\kappa)^2} \right). 
\]
\end{theorem}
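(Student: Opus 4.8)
The plan is to upgrade the convergence in distribution \eqref{Znldk1} to convergence in $L^p$ via uniform integrability, the latter coming from the moment bound $\sup_n \E\big[(|Z_n|/n^\kappa)^q\big] < \infty$ for every $q>0$. By Jensen's inequality it suffices to prove this for $q\geq 2$, and the negative part is harmless: by right transience $M:=\max(0,-\inf_{m\geq0}Z_m)$ is a.s.\ finite with moments of all orders, so $\E[(Z_n^-/n^\kappa)^q]\leq \E[M^q]n^{-\kappa q}\to0$; this also shows $Z_n^-/n^\kappa\to0$ in probability and hence $Z_n^+/n^\kappa\Longrightarrow\Mit$. Thus everything reduces to the bound
\[
\E\big[(Z_n^+)^q\big]\leq C_q\, n^{\kappa q},\qquad q\geq 2,
\]
after which uniform integrability together with \eqref{Znldk1} gives $Z_n/n^\kappa\to\Mit$ in $L^p$ for all $p>0$; the stated constants $\mu_{\mathfrak{M}}$ and $\sMit^2$ then follow by reading $\E[\Mit]$ and $\E[\Mit^2]$ off the power series in \eqref{Mkdef}.

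To prove the displayed moment bound I would use the regeneration structure of the transient RWRE. Let $0<\nu_1<\nu_2<\cdots$ be the regeneration times (set $\nu_0:=0$), and write $\chi_k:=\nu_k-\nu_{k-1}$ and $Y_k:=Z_{\nu_k}-Z_{\nu_{k-1}}\geq1$ for the duration and the displacement of the $k$-th block, so that $(Y_k,\chi_k)_{k\geq2}$ are i.i.d., $(Y_1,\chi_1)$ has a possibly different law, and $Z_{\nu_m}=\Sigma_m:=\sum_{i=1}^m Y_i$. It is classical (see e.g.\ \cite{KKS75,Z04}) that $Y_1,Y_2$ have finite moments of all orders while $\P(\chi_2>t)\geq c_1 t^{-\kappa}$ for $t$ large; this lower tail bound is all we use. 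Let $k_n:=\max\{k\geq0:\nu_k\leq n\}$, so that $\{k_n\geq j\}=\{\nu_j\leq n\}$. Since at every regeneration time the walk strictly exceeds all its previous positions and $\nu_{k_n+1}>n$, we have $0\leq Z_n^+\leq Z_{\nu_{k_n+1}}=\Sigma_{k_n+1}$. Moreover $\{\nu_j\leq n\}\subseteq\{\chi_i\leq n\text{ for all }i\leq j\}$, so $\P(\nu_j\leq n)\leq(1-c_1 n^{-\kappa})^{j-1}\leq e^{-c_1(j-1)n^{-\kappa}}$ for $n$ large, and therefore $\E[k_n^q]=\sum_{j\geq1}(j^q-(j-1)^q)\P(\nu_j\leq n)\leq q\sum_{j\geq1}j^{q-1}e^{-c_1(j-1)n^{-\kappa}}\leq C_q n^{\kappa q}$ for all $q\geq1$ (splitting the sum at $j\approx n^\kappa$).

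It remains to estimate $\E[\Sigma_{k_n+1}^q]$. Writing $\Sigma_m=\bar\mu m+(\Sigma_m-\bar\mu m)$ with $\bar\mu:=\E[Y_2]$ gives $\E[\Sigma_{k_n+1}^q]\leq 2^{q-1}\bar\mu^q\E[(k_n+1)^q]+2^{q-1}\E\big[|\Sigma_{k_n+1}-\bar\mu(k_n+1)|^q\big]$, and the first term is $O(n^{\kappa q})$ by the previous paragraph. For the second term, the crucial point — and the step I expect to be the main technical obstacle — is that one must keep the \emph{exact} last-block indicator $\ind\{\nu_{m-1}\leq n<\nu_m\}$ and \emph{not} weaken it to $\ind\{\nu_{m-1}\leq n\}$, which would cost a factor $n^\kappa$ (the expected number of blocks) and break the estimate. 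Decomposing
\[
\E\big[|\Sigma_{k_n+1}-\bar\mu(k_n+1)|^q\big]=\sum_{m\geq1}\E\big[|\Sigma_m-\bar\mu m|^q\,\ind\{\nu_{m-1}\leq n<\nu_m\}\big],
\]
I would condition on the $\sigma$-field $\mc{F}_{m-1}$ of the first $m-1$ blocks (on which $\Sigma_{m-1}$ and $s:=n-\nu_{m-1}$ are measurable and which is independent of $(Y_m,\chi_m)$), and use $\E[|Y_m-\bar\mu|^q\ind\{\chi_m>s\}]\leq\E[|Y_m-\bar\mu|^{2q}]^{1/2}\P(\chi_2>s)^{1/2}$ to obtain
\[
\E\big[|\Sigma_m-\bar\mu m|^q\ind\{\chi_m>s\}\,\big|\,\mc{F}_{m-1}\big]\leq C_q\Big(|\Sigma_{m-1}-\bar\mu(m-1)|^q\,\bar F(s)+\bar F(s)^{1/2}\Big),\quad\bar F:=\P(\chi_2>\cdot).
\]
Summing over $m$, reindexing $j=m-1$, and bounding $\bar F\leq1$: the $\bar F^{1/2}$-part contributes at most $C_q\sum_j\P(\nu_j\leq n)=C_q(1+\E[k_n])=O(n^\kappa)$, while the remaining part contributes $C_q\sum_j\E\big[|\Sigma_j-\bar\mu j|^q\ind\{\nu_j\leq n\}\big]$, which by Hölder's inequality (the $L^{qp}$-norm of the centered sum against $\P(\nu_j\leq n)^{1/p'}$, using the Marcinkiewicz--Zygmund/Rosenthal bound $\E|\Sigma_j-\bar\mu j|^{qp}\leq Cj^{qp/2}$) together with the geometric bound on $\P(\nu_j\leq n)$ is at most $C\big(\sum_{1\leq j\leq n^\kappa}j^{q/2}+(\text{exponentially small tail})\big)=O(n^{\kappa(q/2+1)})$. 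Since $q\geq2$ forces $q/2+1\leq q$, all three contributions are $O(n^{\kappa q})$, which completes the moment bound and hence the proof.
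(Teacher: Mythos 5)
Your overall strategy is the paper's: reduce \eqref{Znldk1}-plus-$L^p$ to uniform integrability of all moments of $|Z_n|/n^\kappa$, and read the constants $\mu_{\mathfrak{M}}$, $\sMit^2$ off the series \eqref{Mkdef}. Where you differ is in how the moment/tail bound is produced. The paper proves the uniform annealed tail estimate $\P(Z_n>xn^\kappa)\le 3e^{-cx}$ for $x\ge1$ and $n$ large by taking $m\approx xn^\kappa$ regeneration blocks and combining (i) the heavy-tail lower bound $\bPP(R_1>n)\ge cn^{-\kappa}$ to get $\bPP(R_m<n)\le e^{-cx}$, (ii) the exponential tails of $Z_{R_1}$, and (iii) Cram\'er's theorem for $\bPP(Z_{R_m}>xn^\kappa/2)$; UI then follows by integrating the tail. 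You instead bound $\E[(Z_n^+)^q]$ directly by $\E[\Sigma_{k_n+1}^q]$, control $\E[k_n^q]\le C_qn^{\kappa q}$ from the same heavy-tail lower bound, and treat the fluctuation term with Rosenthal/Marcinkiewicz--Zygmund plus H\"older against $\P(\nu_j\le n)$ (minding that the first block has a different law and is not centered at $\bar\mu$, which only affects constants). This closes: the fluctuation contribution is $O(n^{\kappa(q/2+1)})=O(n^{\kappa q})$ for $q\ge2$. Incidentally, the step you flag as the ``main technical obstacle'' is not actually needed: bounding $\ind\{\nu_{m-1}\le n<\nu_m\}\le\ind\{\nu_{m-1}\le n\}$ and applying the same H\"older/Rosenthal estimate already yields $O(n^{\kappa(q/2+1)})$, which suffices precisely because $q\ge2$; the conditioning on $\mc{F}_{m-1}$ is harmless but superfluous. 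The trade-off between the two routes: the paper's argument gives a sharper, uniform exponential tail in $x$ and is shorter; yours avoids Cram\'er's theorem at the cost of moment inequalities for the block sums.

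The one step you should not wave through is the left tail. ``By right transience, $M=\max(0,-\inf_m Z_m)$ is a.s.\ finite with moments of all orders'' is only half right: finiteness follows from transience, but the moments do not. You need the quantitative fact that the annealed backtracking probability $\P(\inf_m Z_m\le -k)$ decays exponentially in $k$; this is true for $\kappa$-regular $\mu$ (it can be proved from the hitting-probability formula, using $\langle\rho_0^\gamma\rangle<1$ for $\gamma\in(0,\kappa)$ to control the right half-line series and a lower-tail Cram\'er bound for the potential on the left half-line), but it is not a consequence of transience alone, so state and justify or cite it. Alternatively, handle the left tail as the paper does: since $\P(Z_n<-xn^\kappa)=0$ for $x\ge n^{1-\kappa}$, the left-tail contribution to the UI integral is at most $n^{(1-\kappa)p}\,\P(Z_n<-n^\kappa)$, which vanishes by the slowdown estimate $\P(Z_n<-n^\kappa)=e^{-n^{\kappa+o(1)}}$ of \cite{FGP10}.
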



In the case where $T_k \to \infty$ (that is, the gaps between resampling times of the environment diverge),
Theorem \ref{thm:lp_cov} implies that 
\begin{equation}\label{Vn-stdev}
 \lim_{n\to\infty} \frac{\Var(X_n)}{\sum_k (T_{k,n})^{2\kappa}} = \sMit^2. 
\end{equation}
In fact, the condition \eqref{Vn-stdev} (which can hold even if $T_k \not\to \infty$) is sufficient to give the following
more explicit way to check the conditions for the subsequential limiting distributions in \eqref{Lawsub}. 
\begin{corollary}\label{Cor:lambda_explicit}
Let $X_n$ be a RWCRE satisfying the assumptions of Theorem \ref{thm:mit_mixtures}. If in addition the cooling map is such that 
\eqref{Vn-stdev} holds,
then the conclusion of Theorem \ref{thm:mit_mixtures} holds true if $\lambda_{\tau,n}$ is replaced by the vector $\tilde{\lambda}_{\tau,n}$ defined by 
\[
    \tilde{\lambda}_{\tau,n}(k) = 
      \frac{(T_{k,n})^\kappa}{V_n},
    \quad \text{where} \quad
    V_n = \sqrt{\sum_j (T_{j,n})^{2\kappa} },
\]
where $T_{k,n}$ is defined as in \eqref{basicdec}.
\end{corollary}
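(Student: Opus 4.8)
The plan is to deduce Corollary~\ref{Cor:lambda_explicit} from Theorem~\ref{thm:mit_mixtures} by showing that, when \eqref{Vn-stdev} holds, a subsequence along which $\tilde\lambda_{\tau,n}^\downarrow$ converges pointwise is also one along which $\lambda_{\tau,n}^\downarrow$ converges pointwise, and to the same limit. Thus, fix $n_j\to\infty$ with $\tilde\lambda_{\tau,n_j}^\downarrow(k)\to\lambda_*(k)$ for all $k$ and $\lambda_*\in\ell^2$; it suffices to prove that $\lambda_{\tau,n_j}^\downarrow(k)\to\lambda_*(k)$ for all $k$, since then \eqref{Lawsub} (with convergence in law and in $L^p$) is precisely the conclusion of Theorem~\ref{thm:mit_mixtures}. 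The first ingredient is an algebraic identity. By independence of the walks $Z^{(k)}$ one has $\Var(X_n)=\sum_k\Var(Z_{T_{k,n}})$, and by Theorem~\ref{thm:lp_cov} we may write $\Var(Z_m)=\sMit^2 m^{2\kappa}r(m)$ for $m\ge1$, with $r(m)\to1$ as $m\to\infty$; since $0<\Var(Z_m)<\infty$ for each fixed $m\ge1$ (the RWRE is non-degenerate at finite times under our ellipticity assumptions), we also have $r_-:=\inf_{m\ge1}r(m)>0$ and $r_+:=\sup_{m\ge1}r(m)<\infty$. Setting $c_n:=\Var(X_n)/(\sMit^2 V_n^2)$, hypothesis \eqref{Vn-stdev} is exactly the statement $c_n\to1$, and a direct computation from \eqref{eq:lambda_defi} and the definition of $\tilde\lambda_{\tau,n}$ gives
\[
\lambda_{\tau,n}(k)=\theta_n(T_{k,n})\,\tilde\lambda_{\tau,n}(k)\quad\text{whenever }T_{k,n}\ge1,\qquad\text{where}\quad\theta_n(m):=\sqrt{r(m)/c_n},
\]
and both sides vanish when $T_{k,n}=0$. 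This multiplier satisfies: (i) there are constants $0<\theta_-\le\theta_+<\infty$ with $\theta_-\le\theta_n(m)\le\theta_+$ for all $m\ge1$ and all large $n$; and (ii) for every $\eta>0$ there are $M_\eta,N_\eta$ such that $|\theta_n(m)-1|<\eta$ whenever $m>M_\eta$ and $n>N_\eta$. Both follow from $r(m)\to1$ and $c_n\to1$. Finally, $V_n\to\infty$ because $\sum_jT_{j,n}=n\to\infty$ while $2\kappa\in(0,2)$.

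The core of the proof is a comparison of super-level sets. As $\lambda_{\tau,n}$ and $\tilde\lambda_{\tau,n}$ are unit vectors in $\ell^2$, the sets $\{k:\tilde\lambda_{\tau,n}(k)>t\}$ and $\{k:\lambda_{\tau,n}(k)>t\}$ are finite for each $t>0$. Using $\lambda=\theta\tilde\lambda$ together with (i)--(ii) I would establish that for every $\epsilon>0$, every $\eta\in(0,1)$, and all large $j$,
\[
\Bigl\{k:\tilde\lambda_{\tau,n_j}(k)>\frac{\epsilon}{1-\eta}\Bigr\}\ \subseteq\ \bigl\{k:\lambda_{\tau,n_j}(k)>\epsilon\bigr\}\ \subseteq\ \bigl\{k:\tilde\lambda_{\tau,n_j}(k)>(1-\eta)\epsilon\bigr\}.
\]
The mechanism is that any coordinate of $\tilde\lambda_{\tau,n_j}$ or $\lambda_{\tau,n_j}$ exceeding a fixed positive level is forced, using $\theta_n\le\theta_+$, to sit on a cooling interval of length at least a fixed positive multiple of $V_{n_j}^{1/\kappa}$ and hence of length tending to infinity; on such an interval $\theta_{n_j}$ is within $\eta$ of $1$ once $j$ is large, by (ii), and the two inclusions then follow from $\lambda=\theta\tilde\lambda$. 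Passing to cardinalities and invoking the standard equivalence (for nonnegative $\ell^2$-sequences) between pointwise convergence of decreasing rearrangements and convergence of the counting functions $t\mapsto\#\{\,\cdot\,>t\}$ at every level that is not a value of the limit, the hypothesis $\tilde\lambda_{\tau,n_j}^\downarrow\to\lambda_*$ gives $\#\{\tilde\lambda_{\tau,n_j}>t\}\to\#\{\lambda_*>t\}$ for all $t$ outside the countable value set of $\lambda_*$. Inserting this into the displayed inclusions, letting $j\to\infty$ and then $\eta\downarrow0$, yields $\#\{\lambda_{\tau,n_j}>\epsilon\}\to\#\{\lambda_*>\epsilon\}$ for all such $\epsilon$, which is equivalent to $\lambda_{\tau,n_j}^\downarrow(k)\to\lambda_*(k)$ for every $k$. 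An application of Theorem~\ref{thm:mit_mixtures} then completes the argument.

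The main obstacle, I expect, is the presence of the ``bad'' multipliers $\theta_n(m)$ for small $m$, which are bounded but need not be close to $1$. A naive attempt to control $\|\lambda_{\tau,n}^\downarrow-\tilde\lambda_{\tau,n}^\downarrow\|_{\ell^2}$, or even $\sum_k|\lambda_{\tau,n}(k)^2-\tilde\lambda_{\tau,n}(k)^2|$, fails in general: a cooling map with many short intervals together with a few long ones can be compatible with \eqref{Vn-stdev} yet put a non-vanishing fraction of the $\ell^2$-mass of both $\lambda_{\tau,n}$ and $\tilde\lambda_{\tau,n}$ on the short intervals, weighting those differently, so that the two vectors remain a fixed $\ell^2$-distance apart even though their decreasing rearrangements both converge to the (then necessarily Gaussian, $\lambda_*\equiv0$) limit. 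This forces the argument to be run through super-level sets rather than norms: the point is that a coordinate that is not asymptotically negligible must live on a cooling interval whose length diverges, which is exactly the regime in which $\theta_n\approx1$. The two remaining ingredients --- the two-sided bound $0<r_-\le r(m)\le r_+<\infty$, and the rearrangement/counting-function equivalence --- are routine, the former using only non-degeneracy of the RWRE at finite times together with Theorem~\ref{thm:lp_cov}.
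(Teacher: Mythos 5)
Your argument is correct and follows essentially the same route as the paper's: use Theorem \ref{thm:lp_cov} together with \eqref{Vn-stdev} to write $\lambda_{\tau,n}(k)=\theta_n(T_{k,n})\,\tilde{\lambda}_{\tau,n}(k)$ with a multiplier that is bounded and tends to $1$ on long cooling intervals, deduce that $\lambda_{\tau,n_j}^\downarrow$ and $\tilde{\lambda}_{\tau,n_j}^\downarrow$ have the same pointwise limits along subsequences, and then invoke Theorem \ref{thm:mit_mixtures}. If anything, your super-level-set comparison is more careful than the paper's own (unpublished) sketch, which assumes $T_k\to\infty$, since it correctly covers cooling maps satisfying \eqref{Vn-stdev} that retain infinitely many short intervals, where the multiplier is only bounded rather than close to $1$.
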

\begin{remark}
  The condition \eqref{Vn-stdev} is sufficient but not necessary in order to replace $\lambda_{\tau,n}$ with $\tilde{\lambda}_{\tau,n}$, as can be seen by considering 
  the cooling map with $T_k \equiv 1$ in which case $\lambda_{\tau,n}^\downarrow$ and $\tilde{\lambda}_{\tau,n}^{\downarrow}$ both converge pointwise to $\mathbf{0} \in \ell^2$.
  \end{remark}

\begin{remark}\label{rem:replacemean}
  In light of the asymptotics of $\E[Z_n]$ in Theorem \ref{thm:lp_cov} we may consider the need for
  the centering term in  Theorem
  \ref{thm:mit_mixtures}. For any random variable $Z$ with finite
  variance, let $\bar{Z}= \frac{Z}{\sqrt{\Var(Z)}}$ denote the non-centered
  normalized version of $Z$ and for $\mathbf{x} \in \ell^2$ let
  $\left(\bar{Z} \right)^{\otimes \mathbf{x}} = \sum_{k \geq 1}
  x(k) \bar{Z}_k$ where $\bar{Z}_1,\bar{Z}_2,\ldots$ are i.i.d. copies
  of the random variable $\bar{Z}$. 
  Since
  \begin{align*}
    \frac{X_{n_j}}{\sqrt{\Var(X_{n_j})}} =
    \frac{X_{n_j}-\E[X_{n_j}]}{\sqrt{\Var(X_{n_j})}} + \frac{\E[X_{n_j}]}{\sqrt{\Var(X_{n_j})}}
  \end{align*}
  it follows from  \eqref{Lawsub} that if $\lim_{j\to\infty}
  \lambda_{\tau,n_j}^\downarrow(k) = \lambda_*(k)$ for all $k\geq 1$
  then $
  \frac{X_{n_j}}{\sqrt{\Var(X_{n_j})}}$ converges to $\left(\bMit
  \right)^{\otimes \lambda_*} +  a(\lambda_*)\Phi$ 
  if $\sum_k \lambda_*(k)<\infty$ and 
  \[
    \lim_{j\to\infty} \frac{\E[X_{n_j}]}{\sqrt{\Var(X_{n_j})}} =  \bb{E}\bigg[\left(\bMit
  \right)^{\otimes \lambda_*}\bigg]
  = \frac{\mu_{\mathfrak{M}}}{\sMit}\left( \sum_k \lambda_*(k)\right).
  \]
\end{remark}

\subsubsection{Limiting distributions for the case \texorpdfstring{$\kappa =2$}{kappa equals 2}}


The limiting distribution result for the RWCRE in the case $\kappa=2$ (Theorem \ref{thm:RWCRElim-k2}) inherits some of the properties of both the case $\kappa>2$ and $\kappa \in (1,2)$. 
Like the $\kappa>2$ case, the limit will be Gaussian for \emph{any} cooling map.
On the other hand, like the case $\kappa \in (1,2)$ one cannot use the replacement method to prove limiting distributions and determining the proper scaling for the limiting distribution is a major difficulty.

In order to use the replacement method to prove a limiting distribution for the RWCRE, one needs to improve the limiting distribution for the RWRE to convergence in $L^2$ (see the discussion on the replacement method in Section \ref{ssec:thm:mit_mix}). 
The first main result in this section gives new asymptotics on the variance of the RWRE in the case $\kappa=2$, and as a consequence shows that one does not have $L^2$ convergence in this case.


\begin{theorem}\label{thm:EZn2lim}
Let $Z=(Z_n)_{n\geq 1}$ be a RWRE with distribution $\mu$ on environments that is $\kappa$-regular with $\kappa =2$. 
Then, 
\begin{equation}\label{EZn2lim}
\lim_{n\to\infty} E_0^\mu \left[ \left( \frac{Z_n - nv}{\sqrt{n\log n}} \right)^2 \right] = b^2 + K_0 v, 
\end{equation}
where $v>0$ is the limiting speed as in \eqref{speed-limit} and the constants $b$ and $K_0$ are as in \eqref{gaus-static-scaling} and \eqref{prectail-speedcenter}, respectively. 
\end{theorem}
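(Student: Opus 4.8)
The plan is to estimate the second moment through the tail–integral (layer–cake) identity. Set $Y_n := \frac{Z_n - nv}{\sqrt{n\log n}}$, so that
\[
 E_0^\mu[Y_n^2] \;=\; \int_0^\infty 2t\,P_0^\mu(Y_n > t)\,dt \;+\; \int_0^\infty 2t\,P_0^\mu(Y_n < -t)\,dt \;=:\; R_n + L_n .
\]
I would show $R_n \to \frac12 b^2$ and $L_n \to \frac12 b^2 + K_0 v$; adding these gives \eqref{EZn2lim}. In both integrals the range $t \le A$ (for a large fixed constant $A$) is a ``central'' part controlled by the static CLT \eqref{gaus-static-scaling}; the asymmetry between $R_n$ and $L_n$ comes entirely from the tails $t > A$, which are exponentially small for $R_n$ (speedups) but only polynomially small for $L_n$ (slowdowns), the latter producing the extra term $K_0 v$.

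For the central part of either integral, \eqref{gaus-static-scaling} gives $P_0^\mu(Y_n > t) \to P(b\Phi > t)$ and $P_0^\mu(Y_n < -t) \to P(b\Phi < -t)$ at every $t>0$ (these are continuity points), so by bounded convergence $\int_0^A 2t\,P_0^\mu(Y_n > t)\,dt \to \int_0^A 2t\,P(b\Phi > t)\,dt$, which increases to $\int_0^\infty 2t\,P(b\Phi>t)\,dt = E[(b\Phi)_+^2] = \frac12 b^2$ as $A\to\infty$ (and likewise for the $\{Y_n < -t\}$ version). For the speedup tail of $R_n$, note that $Z_n \le n$ forces $Y_n \le (1-v)\sqrt{n/\log n}$; combining the large deviation bound $P_0^\mu(Z_n - nv > \varepsilon n) \le e^{-C_\varepsilon n}$ from Theorem \ref{thm:Zn-bor} with a moderate‑deviation bound $P_0^\mu(Z_n - nv > x) \le C e^{-c x^2/(n\log n)}$ for $\sqrt{n\log n} \le x \le \varepsilon_0 n$ — which one gets by passing to the hitting times $T_m$, whose inter‑regeneration increments are bounded below, so that a speedup of $Z_n$ is a lower deviation of a sum of i.i.d.\ terms with a $\log$‑corrected truncated second moment — one obtains $\int_A^\infty 2t\,P_0^\mu(Y_n>t)\,dt \le \frac{C}{c}e^{-cA^2} + o(1)$, which vanishes after $A\to\infty$. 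Hence $R_n \to \frac12 b^2$.

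The heart of the argument is the slowdown tail of $L_n$. Fix the threshold $a_n := (\log n)^3$, chosen so that $a_n\sqrt{n\log n} = \sqrt n (\log n)^{7/2}$ lies in the window $[\sqrt n(\log n)^3,\, nv - \log n]$ of \eqref{prectail-speedcenter} for large $n$, and split $\int_A^\infty = \int_A^{a_n} + \int_{a_n}^\infty$. On $A < t \le a_n$ I would use a moderate‑deviation upper bound $P_0^\mu(Z_n - nv < -x) \le C\big(e^{-cx^2/(n\log n)} + n x^{-2}\big)$ valid for $x \gtrsim \sqrt{n\log n}$ (from the same circle of estimates as \cite{GZ98,BD18}); this yields $\int_A^{a_n} 2t\,P_0^\mu(Y_n < -t)\,dt \le \frac{C}{c} e^{-cA^2} + \frac{C'\log\log n}{\log n}$, which tends to $0$ as $n\to\infty$ and then $A\to\infty$. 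On $t > a_n$, substitute $u = t\sqrt{n\log n}$ to write the integral as $\frac{1}{n\log n}\int_{\sqrt n(\log n)^{7/2}}^{\infty} 2u\,P_0^\mu(Z_n - nv < -u)\,du$; the portion with $u \ge nv - \log n$ is $O(1/n)$, because there $P_0^\mu(Z_n - nv < -u) \le P_0^\mu(Z_n < \log n) = O(\log n\,/\,n^2)$ by \eqref{prectail-speedcenter} while $\int 2u\,du$ over that range is $O(n^2)$, and on $\sqrt n(\log n)^{7/2} \le u \le nv - \log n$ the uniform asymptotics \eqref{prectail-speedcenter} reduce the integral to
\[
 \frac{2(K_0 + o(1))}{n\log n}\int_{\sqrt n(\log n)^{7/2}}^{\,nv-\log n} \frac{nv - u}{u}\,du
 \;=\; \frac{2(K_0+o(1))}{n\log n}\Big[\, nv\log u - u\,\Big]_{\sqrt n (\log n)^{7/2}}^{\,nv-\log n}.
\]
The bracket equals $nv\big(\log(nv) - \log(\sqrt n (\log n)^{7/2})\big) + O(n) = \frac12 nv\log n\,(1+o(1))$ — the decisive point being that $\int du/u$ over a window whose endpoints differ by a multiplicative factor of order $\sqrt n$ produces exactly a $\frac12\log n$ — so this portion converges to $K_0 v$. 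Collecting the three pieces, $L_n \to \frac12 b^2 + K_0 v$.

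Finally one assembles $E_0^\mu[Y_n^2] = R_n + L_n \to \frac12 b^2 + \frac12 b^2 + K_0 v = b^2 + K_0 v$, being careful about the order of limits: for each fixed $A$ the estimates above furnish $\limsup_n$ and $\liminf_n$ bounds for $E_0^\mu[Y_n^2]$ that differ by $O(e^{-cA^2})$, and these close up as $A\to\infty$. I expect the main obstacle to be the slowdown‑tail computation: extracting the precise constant $K_0 v$ requires simultaneously that the window in \eqref{prectail-speedcenter} be wide enough on the logarithmic scale that the lower cut‑off $\sqrt n(\log n)^3$ still yields the full $\frac12\log n$, and that the contributions from $u$ outside that window — both the CLT scale $u \lesssim \sqrt n(\log n)^3$ and the scale $u \gtrsim nv$ — be genuinely negligible; securing the intermediate moderate‑deviation upper bound with a range reaching down close to the CLT scale is the other delicate ingredient.
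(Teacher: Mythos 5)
Your proposal is correct and follows essentially the same route as the paper: a central/right-tail/left-tail decomposition in which the CLT \eqref{gaus-static-scaling} plus bounded convergence handles the central part, a Gaussian-type speedup bound obtained from the regeneration structure handles the right tail, a rough moderate-deviation bound bridges the gap between the CLT scale and the window of \eqref{prectail-speedcenter}, and the window integral of $2u\,K_0(nv-u)u^{-2}$ produces exactly the extra term $K_0 v$ (this is the paper's Lemma \ref{Z2left}). The one caveat is that the intermediate left-tail estimate $P_0^\mu(Z_n-nv<-x)\lesssim e^{-cx^2/(n\log n)}+nx^{-2}$ cannot simply be quoted from \cite{GZ98,BD18}; the paper proves it itself (Proposition \ref{general ltail}, via regeneration times and a truncation argument for heavy-tailed i.i.d.\ sums), which is exactly the mechanism you sketch for the speedup direction, so this is a step to be written out rather than a genuine gap.
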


While \eqref{EZn2lim} shows that the limiting distribution in \eqref{gaus-static-scaling} cannot be improved to $L^2$-convergence, 
it also implies that $\left| \frac{Z_n-nv}{\sqrt{n\log n}}\right|^p$ is uniformly integrable for any $p \in (0,2)$, and thus the convergence in distribution in \eqref{gaus-static-scaling} can be improved to $L^p$ convergence for all $p \in (0,2)$. 
In particular, this implies the following asymptotics for the mean and variance of the RWRE. 

\begin{corollary}\label{k2-meanvar}
 Under the same assumptions as Theorem \ref{thm:EZn2lim}, we have 
\begin{equation}\label{eq:2_mean_var_scaling}
 \lim_{n\to\infty} \frac{E_0^\mu[Z_n]-nv}{\sqrt{n\log n}} = 0, 
 \quad\text{and}\quad 
 \lim_{n\to\infty} \frac{\Var(Z_n)}{n\log n} = b^2 + K_0 v. 
 \end{equation}
\end{corollary}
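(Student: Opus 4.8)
The plan is to derive Corollary~\ref{k2-meanvar} as an immediate consequence of Theorem~\ref{thm:EZn2lim} together with a uniform integrability argument. First I would note that \eqref{EZn2lim} says precisely that the sequence $W_n := \left( \frac{Z_n - nv}{\sqrt{n \log n}} \right)^2$ converges in $L^1$-mean value, i.e. $E_0^\mu[W_n] \to b^2 + K_0 v$. Combined with the convergence in distribution $\frac{Z_n - nv}{\sqrt{n\log n}} \Rightarrow \Phi$ from \eqref{gaus-static-scaling} (so $W_n \Rightarrow \Phi^2$, and $E[\Phi^2] = 1$; note this forces $b^2 + K_0 v = b^2 \cdot 1$ only if $K_0 v = 0$, so actually the limit in \eqref{EZn2lim} is genuinely larger than $E[\Phi^2]$ scaled by $b^2$ — which is consistent, since $\Phi$ in \eqref{gaus-static-scaling} is divided by $b\sqrt{n\log n}$, not $\sqrt{n\log n}$, so $W_n \Rightarrow b^2 \Phi^2$ and $E[b^2\Phi^2] = b^2 < b^2 + K_0 v$). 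This strict inequality is exactly the statement that $\{W_n\}$ is \emph{not} uniformly integrable, hence no $L^2$ convergence — as already remarked in the text.

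Next I would invoke the standard fact that if $Y_n \Rightarrow Y$ and $\sup_n E[|Y_n|^{1+\delta}] < \infty$ for some $\delta > 0$, then $|Y_n|^p$ is uniformly integrable for every $p < 1+\delta$, and consequently $E[|Y_n|^p] \to E[|Y|^p]$ and more generally $Y_n \to Y$ in $L^p$. Applying this with $Y_n = \frac{Z_n - nv}{\sqrt{n\log n}}$, $Y = b\Phi$, and $1 + \delta = 2$ (the needed bound $\sup_n E_0^\mu[Y_n^2] < \infty$ being furnished by \eqref{EZn2lim}), we obtain that the convergence in \eqref{gaus-static-scaling} holds in $L^p$ for all $p \in (0,2)$. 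In particular, taking $p = 1$ gives $E_0^\mu\left[ \frac{Z_n - nv}{\sqrt{n\log n}} \right] \to b \cdot E[\Phi] = 0$, which is the first claim in \eqref{eq:2_mean_var_scaling}.

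For the variance asymptotics, I would write
\[
\frac{\Var(Z_n)}{n \log n} = E_0^\mu\left[ \left( \frac{Z_n - nv}{\sqrt{n\log n}} \right)^2 \right] - \left( \frac{E_0^\mu[Z_n] - nv}{\sqrt{n\log n}} \right)^2.
\]
The first term on the right converges to $b^2 + K_0 v$ by \eqref{EZn2lim}, and the second term converges to $0^2 = 0$ by the first part of \eqref{eq:2_mean_var_scaling} just established. Hence $\frac{\Var(Z_n)}{n\log n} \to b^2 + K_0 v$, completing the proof.

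The only step requiring any care is the uniform integrability / $L^p$-convergence implication, and even that is entirely classical (de la Vallée-Poussin, or a direct truncation estimate using $E[|Y_n| \mathbf{1}_{|Y_n| > M}] \le \frac{1}{M} E[Y_n^2]$). The substantive work is all contained in Theorem~\ref{thm:EZn2lim}, whose proof rests on the precise moderate-deviation estimate \eqref{prectail-speedcenter} from \cite{BD18}; given that theorem, the corollary is genuinely routine, so I do not anticipate a real obstacle here.
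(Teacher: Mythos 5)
Your argument is exactly the paper's: Theorem \ref{thm:EZn2lim} gives a uniform $L^2$ bound, hence $\bigl|\frac{Z_n-nv}{\sqrt{n\log n}}\bigr|^p$ is uniformly integrable for $p<2$, so the weak convergence \eqref{gaus-static-scaling} yields convergence of first moments (giving the mean asymptotics), and the variance limit then follows from $\Var(Z_n)=\E[(Z_n-nv)^2]-(\E[Z_n]-nv)^2$ together with \eqref{EZn2lim}. The only cosmetic caveat is that ``$Y_n\to b\Phi$ in $L^p$'' should be read, as in the paper, as convergence in distribution together with convergence of $p$-th moments; what your proof actually uses is just the moment convergence, which is all that is needed.
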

The asymptotics of the mean and variance in Corollary \ref{k2-meanvar} imply that we can restate the limiting distribution in \eqref{gaus-static-scaling} as 
\begin{equation}\label{Zn-limdist-st}
 \frac{Z_n - E_0^\mu[Z_n]}{\beta \sqrt{\Var(Z_n)}} \underset{n\to\infty}{\Longrightarrow} \Phi, 
 \quad \text{where } \beta = \frac{b}{\sqrt{b^2 + K_0 v}} < 1.  
\end{equation}
Of course the interesting part of the limiting distribution as stated in \eqref{Zn-limdist-st} is that the constant $\beta<1$. 
The fact that the RWRE must be scaled my a non-trivial multiple of the standard deviation to get a standard Gaussian limit is then reflected in our main result for the limiting distributions for RWCRE in the case $\kappa=2$ where the appropriate multiplicative constant depends very delicately on the cooling map $\tau$ and the distribution $\mu$. 

\begin{theorem}\label{thm:RWCRElim-k2}
Let $X_n$ be a RWCRE with $2$-regular distribution $\mu$ and cooling map $\tau$. 
There exists a sequence of numbers $\beta_n = \beta_n(\mu,\tau)  \in [\beta,1]$ for $n\geq 1$ such that 
\[
\frac{X_n - \E[X_n]}{\beta_n \sqrt{\Var(X_n)}} \Rightarrow \Phi,
\] 
where $\Phi$ is a standard normal random variable. 
\end{theorem}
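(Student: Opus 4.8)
The plan is to start from the decomposition \eqref{basicdec}, which writes $X_n \overset{\text{Law}}{=} \sum_k Z^{(k)}_{T_{k,n}}$ as a sum of independent (but not identically distributed) centered-and-rescaled pieces, and to prove a Lindeberg-type central limit theorem for the normalized sum, \emph{provided one rescales by the right constant}. Concretely, writing $W_{k,n} = Z^{(k)}_{T_{k,n}} - E^\mu_0[Z_{T_{k,n}}]$, we have $X_n - \E[X_n] \overset{\text{Law}}{=} \sum_k W_{k,n}$ with the $W_{k,n}$ independent across $k$, and $\Var(X_n) = \sum_k \Var(Z_{T_{k,n}})$. Define $s_n^2 = \sum_k \Var(Z_{T_{k,n}})$ and set $\beta_n^2 = \tilde s_n^2 / s_n^2$, where $\tilde s_n^2 := \sum_k \sigma^2(T_{k,n})$ and $\sigma^2(m)$ is the ``effective variance'' of $\widehat{Z}_m$ picked out by the true Gaussian scaling, i.e.\ $\sigma^2(m) := b^2 m\log m \cdot \big(1 + o(1)\big)$ for large $m$ but $\sigma^2(m) := \Var(Z_m)$ when $m$ is bounded (so that $\beta_n=1$ whenever all cooling increments stay bounded). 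The key point is that Corollary \ref{k2-meanvar} gives $\Var(Z_m) \sim (b^2+K_0v)\, m\log m$ while the bulk of the distribution of $\widehat{Z}_m$ concentrates on scale $b\sqrt{m\log m}$; the ratio $\beta = b/\sqrt{b^2+K_0 v}$ from \eqref{Zn-limdist-st} is exactly the discrepancy, and $\beta_n$ interpolates between $\beta$ and $1$ according to how much of $\Var(X_n)$ comes from ``large'' cooling intervals (where the discrepancy is felt) versus ``small'' ones (where $\widehat Z_m$ is already standardized).

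The main steps, in order, are: (i) \textbf{Truncation.} Fix a large threshold $A$ and split the index set into $\{k : T_{k,n} \le A\}$ and $\{k : T_{k,n} > A\}$. On the small-increment part, the summands are genuinely bounded random variables and a classical Lindeberg argument applies directly; on the large-increment part, use the $L^p$-improvement of \eqref{gaus-static-scaling} for $p \in (0,2)$ that follows from Theorem \ref{thm:EZn2lim} (uniform integrability of $|\widehat Z_m|^p$) together with the large-deviation bounds in Theorem \ref{thm:Zn-bor} to control the contribution of the atypical event $\{Z^{(k)}_{T_{k,n}}$ far from $T_{k,n} v\}$. (ii) \textbf{Variance identification.} Show $\tilde s_n^2 / \Var(X_n) = \beta_n^2$ is well-defined and lies in $[\beta^2, 1]$; this is where Corollary \ref{k2-meanvar} is used quantitatively — $\sigma^2(m)/\Var(Z_m) \to \beta^2$ as $m\to\infty$ and equals $1$ for bounded $m$, so the weighted average $\beta_n^2 = \sum_k \sigma^2(T_{k,n}) / \sum_k \Var(Z_{T_{k,n}})$ is a convex combination of numbers in $[\beta^2,1]$. (iii) \textbf{Lindeberg/CLT for a general array.} Prove that $\sum_k W_{k,n} / (\beta_n s_n) \Rightarrow \Phi$ by verifying a Lindeberg condition for the triangular array $\{W_{k,n}/(\beta_n s_n)\}_k$ — the relevant ``variances'' are now $\sigma^2(T_{k,n})/(\beta_n^2 s_n^2)$, which sum to $1$ by construction, and the Lindeberg tail term is handled by (i). One subtlety: because the true variance of $W_{k,n}$ is $\Var(Z_{T_{k,n}})$, not $\sigma^2(T_{k,n})$, one should work with a ``Gaussian-approximation'' version of each summand (replace the far tail of $\widehat Z_{T_{k,n}}$ by a Gaussian with the matched bulk variance, using a coupling or a characteristic-function comparison) so that the array one actually runs Lindeberg on has the prescribed variances.

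I expect the main obstacle to be step (iii), and specifically reconciling the \emph{two} variance normalizations: the natural object $\sum_k W_{k,n}$ has variance $\Var(X_n)$, but the object that has a Gaussian limit after dividing by $\beta_n s_n$ has its ``effective'' scale governed by the bulk variances $\sigma^2(T_{k,n})$, and the difference between these is \emph{not} asymptotically negligible when large cooling increments carry a positive fraction of the variance. Handling this requires either (a) a careful characteristic-function computation showing that, for each large $m$, $E\big[e^{it \widehat Z_m / \beta_n}\big] = e^{-t^2/2} + o(\text{something summable})$ after the rescaling, exploiting that the heavy left tail of $Z_m - mv$ contributes $O(m\log m)$ to the variance but is ``invisible'' at the scale $b\sqrt{m\log m}$ probed by the CLT; or (b) replacing each $\widehat Z_m$ by an explicit decomposition $\widehat Z_m = \beta\,\Phi_m + R_m$ with $\Phi_m$ Gaussian and $R_m$ a remainder that is small in probability but carries the excess variance, and then showing $\sum_k \lambda_{\tau,n}(k) R_{k,n} \to 0$ in probability. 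Both routes hinge on the precise moderate-deviation estimate \eqref{prectail-speedcenter}, which is exactly why the strong ellipticity hypothesis $\langle \rho_0^{\kappa+\e}\rangle<\infty$ is needed. A secondary (but routine) obstacle is verifying that the centering $\E[X_n] = \sum_k E^\mu_0[Z_{T_{k,n}}]$ and the claimed $\Var(X_n)=\sum_k\Var(Z_{T_{k,n}})$ are legitimate given possible non-integrability issues — but $L^p$ control for $p<2$ from Theorem \ref{thm:EZn2lim} makes the first moments and the (finite) second moments unproblematic.
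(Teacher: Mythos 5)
There is a genuine gap, and it lies exactly where you predicted the main obstacle would be: your identification of the ``effective variance'' $\sigma^2(m)$ is wrong, and with it the constant $\beta_n$. You assign the bulk variance $b^2 m\log m$ to \emph{every} increment with $T_{k,n}$ larger than a fixed threshold $A$. But whether the heavy left tail of a given piece $\tilde{Z}_{T_{k,n}}$ is ``invisible'' to the CLT is not an absolute question about $T_{k,n}$; it depends on the size of $T_{k,n}$ relative to the global scale $s_n=\sqrt{\Var(X_n)}$. If $v\,T_{k,n}\ll s_n$ (which is compatible with $T_{k,n}\to\infty$ whenever that piece carries a negligible fraction of the total variance), then the entire distribution of $\tilde{Z}_{T_{k,n}}$, heavy tail included, lives on a scale that is asymptotically negligible compared to $s_n$, the Lindeberg condition holds with the \emph{full} variance $\Var(Z_{T_{k,n}})\sim(b^2+K_0v)T_{k,n}\log T_{k,n}$, and that full variance genuinely feeds into the Gaussian limit. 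Polynomial cooling $T_k\sim Ak^\alpha$ with $\alpha\le 1$ is a concrete counterexample to your recipe: all $T_k\to\infty$, so your $\beta_n^2\to b^2/(b^2+K_0v)=\beta^2$, yet the correct scaling constant is $1$ (see \eqref{polybn}); under your normalization the sum would converge to a Gaussian of variance $\beta^{-2}>1$, not to $\Phi$. For the same reason your fallback route (b) fails: writing $\widehat{Z}_m=\beta\Phi_m+R_m$, the remainders $R_{k,n}$ carrying the excess variance do \emph{not} vanish in the sum in this regime --- collectively they produce a nontrivial Gaussian contribution.

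The second structural problem is the fixed-threshold split itself. The set $\{k:T_{k,n}>A\}$ can contain unboundedly many indices, so you can neither treat its elements one at a time via the one-dimensional limit law \eqref{Zn-limdist-st} nor run Lindeberg on them directly: for increments in the intermediate regime $s_n\lesssim T_{k,n}\lesssim s_n^2/\log^2 s_n$ the left tail reaches out to $-vT_{k,n}\gg s_n$ and carries a non-vanishing fraction of that term's variance beyond level $\e s_n$, so the Lindeberg condition fails for the untruncated array even though each such term is ``small''. The paper's proof resolves both issues by splitting according to the \emph{relative} variance share $\sigma_{k,n}^2\gtrless\delta_n s_n^2$ with $\delta_n\to0$ (so the ``large'' set has at most $1/\delta_n$ elements and is handled by the RWRE limit law, contributing variance $\beta^2\sigma_{k,n}^2$ each), and by running Lindeberg--Feller on the ``small'' part only after truncating each term on the event $A_{k,n}=\{|\tilde{Z}_{T_{k,n}}|\le \frac{s_n}{\sqrt{\log s_n}}\vee\sqrt{T_{k,n}}\log^4T_{k,n}\}$ of \eqref{tsndn}; the scaling constant is then the mixed, truncation-dependent quantity in \eqref{tbnform}, which interpolates correctly between $(b^2+K_0v)T\log T$ for relatively small pieces and $b^2T\log T$ for relatively large ones --- precisely the interpolation your $\sigma^2(m)$ misses. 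Your steps (i)--(ii) and the general Lindeberg strategy are in the right spirit, but without a truncation level that couples $T_{k,n}$ to $s_n$ and without the relative (not absolute) large/small classification, the argument does not go through and the proposed $\beta_n$ is not the right constant.
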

\begin{remark}
 The formula for the scaling constants $\beta_n$ in terms of the cooling map $\tau$ and the distribution $\mu$ is given explicitly in \eqref{tbnform} below and involves certain truncated variance terms for the RWRE of the form $\Var((Z_n - E_0^\mu[Z_n])\ind_{|Z_n - E_0^\mu[Z_n]|\leq x })$. As part of the proof of Theorem \ref{thm:RWCRElim-k2} we will also give precise asymptotics for such truncated variance terms, and thus one can compute the scaling constants $\beta_n$ for certain specific choices of cooling maps.  In particular, we will give examples in Section \ref{sec:ex} which show that the constants $\beta_n$ can fill the entire range from $\beta$ to $1$ and that the sequence $\beta_n$ can also oscillate with $n$. 
\end{remark}

\begin{remark}
 One can sometimes use the asymptotics of the mean and variance of the corresponding RWRE in Corollary \ref{k2-meanvar} to replace the scaling and/or centering terms in Theorem \ref{thm:RWCRElim-k2} with more explicit expressions. 
 For instance, if $\lim_{k\to\infty} T_k = \infty$ one can replace $\sqrt{\Var(X_n)}$ with $\sqrt{\sum_{k} T_{k,n} \log(T_{k,n})}$ and if in addition one has 
  \begin{equation}\label{er2}
  \sup_{n} \frac{\sum_{k= 1}^{\ell_n+1} \sqrt{T_{k,n} \log(T_{k,n})} }{\sqrt{\sum_{k= 1}^{\ell_n+1} T_{k,n} \log(T_{k,n})}} < \infty
 \end{equation}
then one can also replace the centering term $\E[X_n]$ with $nv$. 
In fact, it is not hard to see that \eqref{er2} is equivalent to the slightly easier to check 
\begin{equation}\label{errorratio}
  \sup_n \frac{\sum_{k=1}^n \sqrt{T_k \log(T_k)}}{\sqrt{\sum_{k=1}^n T_k \log(T_k)} } < \infty, 
\end{equation}
which differs from \eqref{er2} only in that one doesn't have to consider the partial cooling interval $T_{\ell_n+1,n} = n - \tau(\ell_n)$. 
\end{remark}

The proof of Theorem \ref{thm:RWCRElim-k2} is the most difficult and innovative part of the paper. There are two natural ways to try to prove Gaussian limits for the RWCRE, but neither works for all cooling maps. 

\noindent\textbf{Approach 1:} The first approach is to try to apply the Lindberg-Feller CLT using the representation in \eqref{basicdec} as a sum of independent random variables. This approach will work only if the cooling map grows slowly enough so that the triangular array is \emph{uniformly asymptotically negligible}, i.e., 
\[
    \lim_{n \to \infty}\max_{k \leq \ell_n + 1} \bb{P}\big(Z^{(k)}_{T_{k,n}}>\veps \sqrt{\Var(X_n)}\big)=0,
\]
see also \cite[\textsection 7.3, p. 313]{Ash00}. For instance, this approach will work for cooling maps with $T_k \sim A k^{\alpha}$ for some $A,\alpha>0$ (i.e., polynomial cooling). Even in this case, applying the Lindeberg-Feller CLT is not always straightforward as one sometimes needs to apply a truncation step first before applying the CLT for triangular arrays (e.g., polynomial cooling with $\alpha>1$). 

\noindent \textbf{Approach 2:} If the cooling map grows sufficiently fast, then the distribution of $X_n$ is essentially controlled by the last few terms of the sum in \eqref{basicdec}. 
For instance, this approach will work for cooling maps with $T_k \sim A e^{ck}$ for some $A,c>0$ (i.e., exponential cooling). 
The idea with this approach is that one first fixes $m$, and then notes that the sum of the largest $m$ terms in \eqref{basicdec} converges in distribution (after appropriate centering and scaling) to a Gaussian. Then one argues that for cooling maps growing fast enough the distribution of the sum of the largest $m$ terms in \eqref{basicdec} is not very different from the distribution of $X_n$ if $m$ is large enough. 

Since the first approach above only works for cooling maps growing sufficiently slowly and the second approach only works for cooling maps growing sufficiently fast, it is not obvious how to prove Gaussian limits for cooling maps which are more irregular. In our proof of Theorem \ref{thm:RWCRElim-k2} we show how the two approaches can be combined together to cover general cooling maps. One splits the sum in \eqref{basicdec} into the terms where $T_k$ is ``large" or ``small", respectively (whether $T_k$ is classified as ``large" or ``small" depends on its relative size among all the other terms in the sum) and then simultaneously applies approach 1 to the ``small" terms and approach 2 to the ``large" terms. 
There are two main difficulties to implementing this approach for general cooling maps. 
The first difficulty is finding the appropriate way to divide the ``small" and ``large" terms so that both approaches can be applied simultaneously. 
The second difficulty is that when applying approach 1 to the ``small" terms one still needs to truncate the terms before applying the Lindeberg-Feller CLT, and it is a very delicate matter to choose a truncation that works.

\subsection{Future work}\label{sec:future}
 
Combined with the previous results in \cite{ACdCdH20} and \cite{ACP21}, the results of this paper nearly complete the analysis of the limiting distributions for one-dimensional RWCRE. The only remaining case to be studied is when $\kappa=1$ where $\kappa$ is the parameter defined in \eqref{kappadef}. 
We leave that case for consideration in a future work, but for now comment on some of the unique difficulties in the case $\kappa = 1$. 
\begin{itemize}
 \item It does not appear that the replacement method will work in the case $\kappa=1$. Indeed, as is seen in the proof of Theorem \ref{thm:mit_mixtures}, the key ingredient needed to implement the replacement method is for the limiting distribution of the RWRE to be upgraded to $L^2$ convergence. However, when $\kappa=1$ the limiting distribution of the RWRE is a $1$-stable random variable which doesn't have finite variance (or mean) and thus one cannot hope for an $L^2$ convergence result for the RWRE.  
 \item A major difficulty in obtaining the limiting distributions for RWCRE when $\kappa \in (1,2]$ is determining the correct scaling factor. Unlike in the cases where the replacement method can be used, the standard deviation $\sqrt{\Var(X_n)}$ isn't always the correct scaling factor for limiting distributions of RWCRE when $\kappa \in (1,2]$ as seen by Theorem \ref{thm:RWCRElim-k2} and the results in \cite{ACP21}. In these cases, a key element in determining the appropriate scaling factor for a limiting distribution of the RWCRE was in obtaining precise asymptotics for the variance of the corresponding RWRE (Corollary \ref{k2-meanvar} and \cite[Theorem 3.8]{ACP21}). Obtaining precise asymptotics for $\Var(Z_n)$ in the case $\kappa=1$ is complicated
 both by the non-linear centering that is necessary for the RWRE limiting distribution in this case and the fact that the precise large deviation estimates from \cite{BD18} do not include this centering term when $\kappa = 1$.
\end{itemize}

\subsection{Notation}\label{sec:notation}

Before continuing on with the rest of the paper, we will introduce here some notation that we will use throughout the remainder of the paper. 

In the description of the models above, in order to more clearly articulate the difference between the models for RWRE and RWCRE we have used the notation $P_0^\mu$ for the annealed law of RWRE and $\P$ for the annealed law of the RWCRE. However, we could expand the measure $\P$ to include copies of RWRE so that equalities in law such as \eqref{basicdec0} become almost sure equalities. We will assume throughout the remainder of the paper that we have done such an expansion of $\P$ and will therefore in a slight abuse of notation also use $\P$ in place of $P_0^\mu$ for the annealed law of a single RWRE. 

Because our main results are stated for the RWCRE centered by its mean, we will often want to use a centered version of the RWRE. Thus, we will use the notation $\tilde{Z}_n = Z_n - \E[Z_n]$. 

Our proofs of tail asymptotics of RWRE in Sections \ref{ssec:proofs_subbalistic} and \ref{sec:kappa2} will use certain facts about regeneration times for RWRE. We recall here the definition of regeneration times for a RWRE as well as some basic facts about regeneration times that we will use in the proofs. 
For a transient RWRE $\{Z_n\}_{n\geq 0}$, the regeneration times $0<R_1<R_2<R_3<\ldots$ are
defined by 
\[
\begin{aligned}
 R_1&=\inf\big\{n>0 \colon \max_{m  < n} Z_{m}< Z_n \leq \min_{m>n} Z_m \big\} \\
 \text{and}\quad 
 R_k &= \inf\big\{n>R_{k-1} \colon \max_{m  < n} Z_{m}< Z_n \leq \min_{m>n} Z_m \big\}, \quad \text{for } k>1.
\end{aligned}
\]
We collect here a few properties of regeneration times that we will
use in our analysis below. Details of these facts can be found in
\cite{SZ99}, \cite[Appendix B]{ACP21}, and the references
therein.  
To state these facts, for convenience of notation we will let $R_0 =
0$ though this is a slight abuse of notation because $R_0$ is not
necessarily a regeneration time (as reflected in the first fact
below). We will be assuming that the distribution $\mu$ on
environments is $\kappa$-regular with $\kappa \in (0,2]$, though most
of these properties are true in greater generality.  
\begin{description}
\item \textbf{I.i.d.\ structure.}  The sequence of joint random
  variables $\{(Z_{R_k}-Z_{R_{k-1}},R_k-R_{k-1}) \}_{k\geq 1}$ are
  independent, and for every $k\geq 2$ the vector
  $(Z_{R_k}-Z_{R_{k-1}},R_k-R_{k-1})$ has the same distribution as
  $(Z_{R_1},R_1)$ under the measure
\[
 \bPP(\cdot) = \P( \, \cdot \mid Z_n\geq 0, \forall n\geq 0 ). 
\]
\item \textbf{Regeneration distances have light tails.} There are
  constants $C,c>0$ such that 
  \begin{equation}
    \label{eq:light_tails_1}
    \P(Z_{R_1} > n) \leq C e^{-cn}.
  \end{equation}
  Note that $\P(Z_{R_2}-Z_{R_1} > n ) = \bPP(Z_{R_1} > n) \leq
  \frac{\P(Z_{R_1} > n)}{\P(Z_n \geq 0, \forall n\geq 0)}$, so that $Z_{R_2}-Z_{R_1}$ also has exponential tails.  
\item \textbf{Regeneration times
    have heavy tails.} There is a constant $C>0$ such that
  \begin{equation}
    \label{eq:reg_times_heavy}
    \P(R_2-R_1 > n) = \bPP(R_1 > n) \sim C n^{-\kappa}.
  \end{equation}
  Under the
  measure $\P$ we have the slightly weaker control on the tail of the
  first regeneration time: $\E[R_1^\gamma] < \infty$ for all
  $\gamma \in (0,\kappa)$.
\item \textbf{Connection with the limiting speed.} If $\kappa > 1$
  then the limiting speed of the RWRE as defined in
  \eqref{speed-limit} is given by  
\begin{equation}\label{vform-reg}
v = \frac{\E[Z_{R_2}-Z_{R_1}]}{\E[R_2-R_1]} = \frac{\bEE[Z_{R_1}]}{\bEE[R_1]}. 
\end{equation}
(Note that \eqref{vform-reg} holds true when $\kappa \in (0,1]$ as
well in the sense that $v=0$ and $\bEE[R_1] = \infty$.) 
\end{description}

\section{RWRE results for the case \texorpdfstring{$\kappa\in(0,1)$}{kappa in (0,1)}}\label{sec:proofs}
In this Section we prove Theorems~\ref{thm:mit_mixtures} and \ref{thm:lp_cov}. 
We will first prove Theorem \ref{thm:lp_cov} as it is the key element of the proof of Theorem \ref{thm:mit_mixtures}. 

\subsection{Proof of Theorem \ref{thm:lp_cov}} \label{ssec:proofs_subbalistic} 
By Theorem 4.6.3 in \cite{Durret}, to prove  Theorem \ref{thm:lp_cov}
it is enough to prove that 
$\{\left|\frac{Z_n}{n^\kappa}\right|^p\}_{n\geq 1}$ is uniformly
integrable for any $p<\infty$, i.e., that
  \[
    \lim_{M\to\infty} \limsup_{n\to\infty} \int_M^\infty p x^{p-1}
    \P(|Z_n| > x n^\kappa) \, \ud x = 0. 
\]
We will obtain bounds on $ \P(|Z_n| > x n^\kappa)$, the tail
probabilities in the integral above by bounding separately the left tails,
$\P(Z_n <- x n^\kappa)$ and the right tails $\P(Z_n > x n^\kappa)$.

\textbf{Left tail bounds.}  
Since $\P(Z_n < -x n^\kappa) = 0$ for $x \geq n^{1-\kappa}$ we have that 
\begin{align*}
 \int_1^\infty p x^{p-1} \P(Z_n < -x n^\kappa) \, \ud x
  &\leq \P(Z_n < -n^\kappa) \int_1^{n^{1-\kappa}} p
    x^{p-1}  \, \ud x \\
    &\leq n^{(1-\kappa)p} \P(Z_n < -n^\kappa), 
\end{align*}
and since \cite[Theorem 1.4]{FGP10} implies that $\P(Z_n < -n^\kappa) = e^{-n^{\kappa + o(1)}}$ this upper bound vanishes as $n\to\infty$. This proves the uniform integrability estimates for the left tails. 

\textbf{Right tail bounds.}
For the right tail bounds we will use regeneration times. 
Note that if the $(m+1)$-st regeneration time occurs after time $n$
then $Z_n \leq Z_{R_{m+1}}$. 
Therefore, for any $m\geq 1$ we have that
\begin{align}
 \P( Z_n > x n^\kappa ) &\leq \P(R_{m+1} < n) + \P(Z_{R_{m+1}} > x n^\kappa) \nonumber \\
 &\leq \bPP(R_m < n) + \P\left(Z_{R_{1}} > \frac{x n^\kappa}{2} \right)
 + \bPP\left(Z_{R_m} > \frac{x n^\kappa}{2} \right), \label{k1-rtub}
\end{align}
where in the last inequality we used that $R_{m+1}-R_1$ and
$Z_{R_{m+1}}-Z_{R_1}$ have the same distribution under $\P$ as do
$R_m$ and $Z_{R_m}$, respectively, under the measure $\bPP$.  By
\eqref{eq:light_tails_1}, there are $C,c >0$ such that
\begin{equation}
  \label{eq:2nd_term}
  \P\left(Z_{R_{1}} > \frac{x n^\kappa}{2} \right) \leq C e^{-c x
    n^\kappa} \leq Ce^{-cx}.
\end{equation}
To bound the first and third terms in \eqref{k1-rtub}
choose $m$ depending on $x$ and $n$ as follows
\begin{equation}\label{k1-mnx}
m = m(x,n) = \left\lfloor \frac{x n^\kappa}{4 \bEE[Z_{R_1}] } \right\rfloor. 
\end{equation}
For the first probability in \eqref{k1-rtub}, by the i.i.d structure
of $\{R_k-R_{k-1}\}_{k\geq 1}$ under $\bPP$,
we obtain that 
$\bPP(R_m < n) 
 \leq 
 \bPP(R_1 < n)^m 
 = \left( 1-\bPP(R_1 \geq n) \right)^m$. 
Now, by \eqref{eq:reg_times_heavy}  and \eqref{k1-mnx},
we obtain that  there is $n_0 \in \bb{N}$ and a constant $c>0$ such
that 
\begin{equation}
  \label{eq:Rm_bound_exp_decay}
  \bPP(R_{m(x,n)} < n) \leq e^{-c x} \text{ for all }n >n_0, x \geq 1.
\end{equation}
For the third probability in \eqref{k1-rtub}, first we use our choice of $m$ in \eqref{k1-mnx} to get that $\bPP\left(Z_{R_m} > \frac{x n^\kappa}{2} \right) 
\leq \bPP\left( \frac{Z_{R_m}}{m} > 2 \bEE[Z_{R_1}] \right)$.
Since $Z_{R_m}$ is the sum of $m$ i.i.d.\ random variables with exponential tails and mean $\bEE[Z_{R_1}]$, 
by Cram\'{e}r's theorem, see \cite[Thm 1.4, p 5]{dHol00},
there is a constant $c>0$ and $n_1 \in \bb{N}$ such that
\begin{equation}
    \label{eq:third_part}
    \bPP\left(Z_{R_m} > \frac{x n^\kappa}{2} \right) 
\leq \bPP\left( \frac{Z_{R_m}}{m} > 2 \bEE[Z_{R_1}] \right)
\leq e^{-cm} \leq
    e^{-c x n^\kappa} \leq e^{-cx} \text{ for all } n>n_1, \, x\geq 1. 
\end{equation}
By \eqref{eq:2nd_term}, \eqref{eq:Rm_bound_exp_decay},  and
\eqref{eq:third_part} we obtain from  \eqref{k1-rtub} that there is $n_2
\in \bb{N}$ and $c>0$ such that 
$\P(Z_n > x n^\kappa) \leq 3 e^{-cx}$ for all $n\geq n_2$ and $x\geq 1$. 
From this it then follows that 
\[
\lim_{M\to\infty} \lim_{n\to\infty} \int_M^\infty p x^{p-1} \P(Z_n > x n^\kappa) \, \ud x = 0. 
\]

%
\subsection{Proof of Theorem
  \ref{thm:mit_mixtures}}\label{ssec:thm:mit_mix}
To prove of Theorem \ref{thm:mit_mixtures}   we follow the ideas in \cite[Section 3]{ACdCdH20} which we now sketch.  Essentially the idea is to use a threshold $J>0$ to distinguish small increments, $T_{k,n} \leq J$, from large increments, $T_{k,n}>J$,  and let this threshold grow after we take $n \to \infty $ to obtain the limit statement. By the CLT for iid random variables, we may replace the small terms by independent copies of Mittag-Leffler distributions as both have the same Gaussian limit. 
Next by using the convergence of \eqref{Znldk1} which holds in $L^2$ we can show that there is a coupling of $(Z^{(k)}_{m}, \Mit^{(k)}, k,m \in \bb{N})$ for which the difference between the increments and copies of Mittag-Leffler can be neglected and we are allowed to \emph{replace}  the left hand side of \eqref{Lawsub} by weighted sums of independent Mittag-Leffler random variables. More explicitly,  recall \eqref{rm-dec} and note that for any $J>0$  we have 
\begin{align*}
\mathfrak{X}_n &:=
\frac{X_n- \bb{E}[X_n]}{\sqrt{\Var(X_n)}} \nonumber \\
&= \sum_k \lambda_{\tau,n}(k) \ind_{T_{k,n} \leq J}\bigg[ \frac{Z^{(k)}_{T_{k,n}} - \bb{E}[Z^{(k)}_{T_{k,n}}]}{\sqrt{\Var(Z^{(k)}_{T_{k,n}})}} \bigg] +\sum_k \lambda_{\tau,n}(k) \ind_{T_{k,n} >J}\bigg[ \frac{Z^{(k)}_{T_{k,n}} - \bb{E}[Z^{(k)}_{T_{k,n}}]}{\sqrt{\Var(Z^{(k)}_{T_{k,n}})}} \bigg],
\end{align*}
For the small increments, the CLT for iid random variables give us that for any $J>0$ and any fixed bounded continuous function $f: \bb{R} \to \bb{R}$ 
\begin{equation}\label{small-replace}
\lim_n \bb{E}\Bigg[f\bigg(\sum_k \lambda_{\tau,n}(k) \ind_{T_{k,n} \leq J}\Big[ \frac{Z^{(k)}_{T_{k,n}} - \bb{E}[Z^{(k)}_{T_{k,n}}]}{\sqrt{\Var(Z^{(k)}_{T_{k,n}})}} \Big]\bigg) \Bigg]
-\bb{E}\Bigg[f\bigg(\sum_k \lambda_{\tau,n}(k) \ind_{T_{k,n} \leq J} \widehat{\mathfrak{M}}_k^{(k)} \bigg)\Bigg] = 0.
\end{equation}
Now, provided the coupling of the random variables ensures almost sure convergence \eqref{Znldk1}, we obtain
\begin{equation}\label{large-replace}
\lim_{J \to \infty}\lim_{n\to \infty} \bb{E}\bigg[\Big(\sum_k \lambda_{\tau,n}(k) \ind_{T_{k,n} >J}\bigg[ \frac{Z^{(k)}_{T_{k,n}} - \bb{E}[Z^{(k)}_{T_{k,n}}]}{\sqrt{\Var(Z^{(k)}_{T_{k,n}})}} - \cMit^{(k)} \bigg] \Big)^2\bigg] = 0.
\end{equation}
A combination of \eqref{small-replace} and  \eqref{large-replace} allows us claim that $\big(\mathfrak{X}_n, n\in \bb{N} \big)$ has the same subsequential weak limits as $\big((\cMit)^{\otimes \lambda_{n}}, n \in \bb{N}\big)$.
The final step in the proof of \eqref{Lawsub} is to identify the subsequential weak limits of $\big((\cMit)^{\otimes \lambda_{n}}, n \in \bb{N}\big)$.
For ease of notation, let $\lambda_j : = \lambda_{\tau,n_j}^\downarrow$, note that $(\cMit)^{\otimes \lambda_j} \overset{(d)}{=} (\cMit)^{\otimes \lambda_{\tau,n_j}} $, and  recall that we are assuming that $\lim_{j\to\infty} \lambda_j(k) = \lambda_*(k)$ for all $k\geq 1$.  Now note that for any $K>0$
\begin{equation}\label{fix-K}
\sum_{k = 1}^K \lambda_{j}(k) \cMit^{(k)} \underset{j\to\infty}{\Longrightarrow} \sum_{k = 1}^K \lambda_{*}(k) \cMit^{(k)}.
\end{equation}
By \eqref{fix-K}, we may take $K_j \to \infty$ slowly enough such that
\[
\sum_{k = 1}^{K_j} \lambda_{j}(k) \cMit^{(k)} \underset{j\to\infty}{\Longrightarrow} \sum_{k = 1}^\infty \lambda_{*}(k) \cMit^{(k)}.
\]
To conclude, we note that we may also take $K_j \to \infty$ slowly enough such that  
\begin{align*}
a(\lambda_*)^2 &= 1 - \sum_{k=1}^\infty \big(\lambda_*(k)\big)^2 = \sum_{k=1}^\infty \big(\lambda_{j}(k)\big)^2 - \sum_{k = 1}^\infty \big(\lambda_{*}(k)\big)^2\\
& = \lim_{j} \sum_{k=1}^\infty \big(\lambda_j(k)\big)^2 - \sum_{k = 1}^{K_j} \big(\lambda_*(k)\big)^2 = \lim_j \sum_{k=1}^\infty \big(\lambda_j(k)\big)^2 - \sum_{k = 1}^{K_j} \big(\lambda_j(k)\big)^2\\
& = \lim_j \sum_{k = K_j+1}^\infty \big(\lambda_j(k)\big)^2,
\end{align*}
and using the Lindeberg condition for triangular arrays, see Theorem 3.4.10 in \cite{Durret},  we obtain that 
\[
\sum_{k = K_j+1}^\infty \lambda_{j}(k) \cMit^{(k)} \underset{j\to\infty}{\Longrightarrow} a(\lambda_*) \Phi,
\]
where $\Phi_0$ is a standard Gaussian random variable. 
By the independence of the sequence $(\Mit^{(k)}, k \in \bb{N})$
we obtain that
\[
\sum_{k = 1}^{\infty} \lambda_{j}(k) \cMit^{(k)}  =  \sum_{k = 1}^{K_j} \lambda_{j}(k) \cMit^{(k)}  +  \sum_{k = K_j + 1}^{\infty} \lambda_{j}(k) \cMit^{(k)} \underset{j\to\infty}{\Longrightarrow} \sum_{k = 1}^\infty \lambda_{*}(k) \cMit^{(k)} + a(\lambda_*) \Phi_0,
\]
where the two terms on the right are independent. This
concludes the proof of \eqref{Lawsub}. To obtain convergence in $L^p$  for all $p<\infty$ it is enough to show that $\sup_n \bb{E}[(\mathfrak{X}_n)^{2r}] < \infty$ for all $r \in \bb{N}$.
This can be proved by representing $\mathfrak{X}_n$ as in \eqref{rm-dec}, using a binomial expansion of $(\mathfrak{X}_n)^{2r}$ and using the following facts: (1) the terms in the decomposition in \eqref{rm-dec} are independent with zero mean, (2) for any $\ell \geq 2$ we have $\sum_{k} (\lambda_{\tau,n}(k))^\ell \leq \sum_{k} (\lambda_{\tau,n} (k))^2 = 1$, and (3) Theorem \ref{thm:lp_cov} implies that $\sup_n \E[(\frac{Z_n - \E[Z_n]}{\sqrt{\Var(Z_n)}})^\ell ] \leq C_\ell < \infty$ for all $\ell <\infty$.

\section{RWRE results for the case \texorpdfstring{$\kappa=2$}{kappa equals 2}}\label{sec:kappa2}

In this section we will prove some of the new RWRE results that will be needed for the analysis of the limiting distributions of RWCRE when the distribution $\mu$ is $2$-regular. 
This will include the proof of Theorem \ref{thm:EZn2lim}, but will also include some new large and moderate deviation tail bounds as well as some asymptotics of truncated moments that will be crucial later in the proof of Theorem \ref{thm:RWCRElim-k2}. 
Since the moments of the RWRE can be expressed in terms of the tails of the distribution of the RWRE, we will need good tail asymptotics of the RWRE to prove Theorem \ref{thm:EZn2lim}. We will divide our analysis of the tails of the RWRE into the right and left tails separately since the asymptotics are very different in either case. 

\subsection{Right tail estimates}

Our main result in this section is the following Gaussian right tail estimate for the RWRE. 

\begin{lemma}
If the distribution $\mu$ is $2$-regular, then there exist constants $C,c>0$ such that for all $n$ sufficiently large, 
\begin{equation}\label{rtail-vn}
\mathbb{P}(Z_n-vn\geq x)\leq Ce^{-c\frac{x^2}{n\log n}}, \qquad \forall x>0. 
\end{equation}
\end{lemma}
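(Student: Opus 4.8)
The plan is to use the regeneration structure together with the exponential tails of the regeneration increments $Z_{R_k}-Z_{R_{k-1}}$. The key point is that if the RWRE has moved distance at least $vn + x$ by time $n$, then it must have completed ``too many'' regenerations relative to the elapsed time: roughly speaking, if $Z_n \geq vn + x$ then (since $Z_n \leq Z_{R_{m+1}}$ whenever $R_{m+1} > n$, and $Z_{R_m} \approx m\,\bEE[Z_{R_1}]$ while $R_m \approx m\,\bEE[R_1]$ up to fluctuations) the number of regenerations up to time $n$ is at least about $(vn+x)/\bEE[Z_{R_1}]$, whereas a time budget of $n$ typically only permits about $n/\bEE[R_1] = vn/\bEE[Z_{R_1}]$ regenerations. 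So we need an excess of order $x/\bEE[Z_{R_1}]$ regenerations to have occurred ``early,'' and this is a moderate deviation event for sums of i.i.d. light-tailed random variables.

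Concretely, I would first fix $m = m(x,n) = \lceil (vn+x)/\bEE[Z_{R_1}]\rceil$ or a suitable constant multiple thereof, and write, as in \eqref{k1-rtub},
\[
\P(Z_n - vn \geq x) \leq \P(R_{m+1} \leq n) + \P\!\left(Z_{R_{m+1}} < vn + x,\ R_{m+1} > n,\ Z_n \geq vn+x\right).
\]
The second term is handled by noting $Z_n \leq Z_{R_{m+1}}$ on $\{R_{m+1}>n\}$, so it vanishes by the choice of $m$ (if $m$ is chosen so that $\bEE[Z_{R_1}]\, m \geq vn + x$ comfortably, then $\{Z_{R_{m+1}} < vn+x\}$ forces $Z_{R_{m+1}}$ to be below its mean by an amount $\gtrsim x + (\text{slack})$, another light-tailed moderate deviation we can absorb). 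The main term is $\P(R_{m+1}\leq n) = \bPP(R_m \leq n') $ (up to shifting by $R_1$), which by the i.i.d. structure is $\bPP(\sum_{i=1}^m (R_i - R_{i-1}) \leq n')$ with each increment having mean $\bEE[R_1]$. Since $m\,\bEE[R_1] \approx (vn+x)\bEE[R_1]/\bEE[Z_{R_1}] = n + x/v \cdot (\text{const})$, we are asking a sum of $m$ i.i.d. positive variables with exponential tails to fall short of its mean by $\Theta(x)$. A Cramér/Bernstein-type bound for the left tail of such a sum gives a bound $\exp(-c\, x^2 / m) = \exp(-c' x^2/(n+x))$; for $x \leq n$ this is $\exp(-c'' x^2/n)$, which is stronger than the claimed $\exp(-cx^2/(n\log n))$, and for $x > n$ one falls back on the large deviation estimate from Theorem~\ref{thm:Zn-bor} which gives exponential decay in $x$.

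There is a subtlety: the regeneration increments $R_i - R_{i-1}$ for $i\geq 2$ are distributed as $R_1$ under $\bPP$, which has only $\kappa = 2$ moments minus epsilon for the \emph{heavy} tail $\bPP(R_1 > n)\sim Cn^{-2}$ — wait, that is the heavy tail, not a light tail. So the left-tail Cramér bound is fine (left tails of positive variables are automatically light), but one must be careful that $\bEE[R_1] < \infty$, which holds since $\kappa = 2 > 1$ via \eqref{vform-reg}. The role of the $\log n$ in the denominator is then a small loss we can afford: in fact the argument above seems to give the cleaner rate $x^2/n$, so the stated bound with $n\log n$ is comfortably implied. The honest reason the paper states $n\log n$ is presumably that it is the natural scale matching $\Var(Z_n)\sim (b^2+K_0v)\,n\log n$ from Corollary~\ref{k2-meanvar}, and having the right tail decay at this scale is exactly what is needed downstream.

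\textbf{Main obstacle.} The delicate step is making the reduction ``$Z_n \geq vn+x \Rightarrow$ too many regenerations happened by time $n$'' quantitatively tight enough, in particular choosing the constant in $m(x,n)$ and controlling the joint event that $Z_{R_{m+1}}$ is simultaneously not much larger than $vn+x$ (needed so that $m$ regenerations really did fit) while $R_{m+1} > n$. One clean way around this is to split on the value of the number of regenerations $N_n := \max\{k : R_k \leq n\}$: write $\P(Z_n - vn \geq x) \leq \P(N_n \geq m) + \P(Z_{R_{N_n+1}} \geq vn + x,\ N_n < m)$, bound the first by the left-tail deviation of $\sum_{i\le m}(R_i-R_{i-1})$ below $n$, and bound the second by $\sum_{j < m}\P(N_n = j)\,\P(Z_{R_{j+1}} \geq vn+x)$ where now $Z_{R_{j+1}}$ is a sum of $j < m$ i.i.d. exponential-tailed variables with mean $\bEE[Z_{R_1}]$ forced to exceed roughly $\bEE[Z_{R_1}]\,m > \bEE[Z_{R_1}]\,j + \Theta(x)$, again a light-tailed upper deviation handled by Cramér. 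Getting the two deviation bounds to combine with a single constant $c$ uniformly in $x > 0$, and tracking the $x \lesssim n$ versus $x \gtrsim n$ regimes (the latter deferred to the LDP of Theorem~\ref{thm:Zn-bor}), is the bookkeeping that requires care but no new ideas.
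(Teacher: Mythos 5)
Your outline follows the same regeneration-time skeleton as the paper (choose $m\asymp (vn+x)/\bEE[Z_{R_1}]$, bound the light-tailed term $\P(Z_{R_1}\geq x/2)$, the upper deviation of $Z_{R_m}$, and the event that $m$ regenerations fit into time $n$), but there is a genuine gap at the decisive step: the left-tail estimate for $R_m$. Under $\bPP$ the increments $R_i-R_{i-1}$ have tail $\bPP(R_1>t)\sim Ct^{-2}$ (see \eqref{eq:reg_times_heavy}), so for $\kappa=2$ they have \emph{infinite variance}. A Cram\'er/Bernstein bound of the form $\exp(-cx^2/m)$ requires $\log \bEE[e^{-\lambda(R_1-\bEE[R_1])}]\leq C\lambda^2$ for small $\lambda$, which is tantamount to finite variance; the observation that ``left tails of positive variables are automatically light'' only gives finiteness of this moment generating function, not the quadratic bound, because the truncated second moment $\bEE[R_1^2\ind_{R_1\leq 1/\lambda}]$ grows like $|\log \lambda|$. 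The correct small-$\lambda$ estimate is $\bEE[e^{-\lambda(R_1-\bEE[R_1])}]\leq e^{C\lambda^2|\log\lambda|}$ — exactly the paper's Lemma \ref{lem:stable-log-factor} — and optimizing over $\lambda$ yields $\bPP(R_m-\bEE[R_m]<-cx)\leq Ce^{-c'x^2/(m\log m)}$ (Corollary \ref{cor:lthtsum}). This is precisely where the $n\log n$ in \eqref{rtail-vn} comes from. Your stronger claim that the argument ``gives the cleaner rate $x^2/n$'' is not merely unproven but false: by \eqref{gaus-static-scaling}, taking $x=tb\sqrt{n\log n}$ the left side of \eqref{rtail-vn} converges to $\P(\Phi\geq t)>0$, whereas $e^{-cx^2/n}=e^{-cb^2t^2\log n}\to 0$. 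So the $\log$ factor is forced, and establishing it is the only genuinely nontrivial content of the lemma, which your proposal skips.

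The remaining pieces of your plan are sound and coincide with the paper's proof: $Z_{R_1}$ has exponential tails by \eqref{eq:light_tails_1}; the upper deviation of $Z_{R_m}$ is a sum of i.i.d.\ increments with exponential tails, for which a standard bound does give $e^{-cx^2/n}$; and no appeal to the large deviation principle is needed for large $x$, since $\P(Z_n-vn\geq x)=0$ for $x>(1-v)n$, so one may simply restrict to $x\leq(1-v)n$ (with $m\asymp n$ uniformly in that range). If you replace your finite-variance Bernstein step by the $\lambda^2|\log\lambda|$ moment generating function bound (or prove Corollary \ref{cor:lthtsum} directly), your argument closes; as written it does not.
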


\begin{proof}
First of all, note that it is enough to prove the inequality in \eqref{rtail-vn} for $x \in (0,(1-v)n]$ since the probability on the left is zero for $x>(1-v)n$. 

To this end, recall the notation regarding regeneration times introduced in Section \ref{sec:notation} and for any
$x\in (0,(1-v)n]$
let $m=m(x,n)$ be defined by 
\begin{equation}\label{mxdef}
m
= \left \lfloor \frac{vn+\frac{x}{4}}{\bEE[Z_{R_1}]} \right\rfloor. 
\end{equation}
We will use in several places below that
this choice of $m$ implies that there is a constant $c_1>0$ such that $c_1 n \leq m \leq \frac{1}{c_1} n$ for all 
$x\in (0,(1-v)n]$.
For this choice of $m$ we have that for $n$ sufficiently large 
\begin{align}
  &  \mathbb{P}(Z_n-vn\geq x) \nonumber \\
  &\qquad \leq \mathbb{P}(Z_{R_1}\geq x/2)+\mathbb{P}(Z_{R_{m+1}}-Z_{R_1}\geq vn+x/2)+\mathbb{P}(R_{m+1}-R_1<n) \nonumber \\
    &\qquad \leq P(Z_{R_1} \geq x/2) + \bPP\left( Z_{R_{m}}- \bEE[ Z_{R_{m}} ] \geq x/4 \right) + \bPP\left( R_{m} - \bEE[R_{m}] < -x/(8v) \right), \label{decom rtail kappa2}
\end{align}
where in the last inequality we used \eqref{mxdef}, \eqref{vform-reg}, and the i.i.d.\ structure of regeneration times.
We will bound the three terms in \eqref{decom rtail kappa2} separately. 

Since $Z_{R_1}$ has exponential tails, see \eqref{eq:light_tails_1}, the first term in \eqref{decom rtail kappa2} can be bounded by
$Ce^{-cx}$ for $x>0$. 
For the second term in \eqref{decom rtail kappa2}, since 
under $\bPP$ we have that $Z_{R_m}$ is a sum of $m$ i.i.d.\ random variables with exponential tails, 
standard large deviation results 
(e.g. \cite[Chapter III, Theorem 15]{Petrov}) imply that 
there exists a constant $c>0$ such that 
\[
\bPP(Z_{R_m}-\bEE[Z_{R_m}] \geq x/4 )\leq \exp\left\{\frac{-c x^2}{n} \right\}, \qquad \forall x \in (0,(1-v) n].  
\]
(Note that here we are using that $c_1 n \leq m \leq n/c_1$ as noted above.) To bound the last term in \eqref{decom rtail kappa2} we will use the fact that $R_m - \bEE[R_m]$ is a sum of $m$ i.i.d.\ random variables which have mean zero, are bounded below, and have tails decaying like $x^{-2}$ to the right. Thus, applying Corollary \ref{cor:lthtsum} we get for $n$ large enough and $x \leq (1-v)n$ that  
\[
\bPP\left(R_m - \bEE[R_m] < -x/(8v) \right) \leq e^{-c \frac{x^2}{m \log m}} \leq e^{-c \frac{x^2}{n \log n}}. 
\]
(Note that to apply Corollary \ref{cor:lthtsum} we are using both that $x < (1-v)n$ and that $m\geq c_1 n$, while for the second inequality above we are using that $m\leq n/c_1$.)

Putting together the bounds for the three terms in \eqref{decom rtail kappa2}, we have for $n$ large enough that 
$ \mathbb{P}(Z_n-vn\geq x)\leq Ce^{-cx}+e^{-c\frac{x^2}{n}}+e^{-c\frac{x^2}{n\log n}} 
    \leq C e^{-c \frac{x^2}{n\log n}}$ for all $x \in (0,(1-v)n]$. 
\end{proof}

An immediate consequence of \eqref{rtail-vn} is the following bound on the truncated right tail $L^p$ norm of $Z_n-vn$. 
\begin{corollary}[\textbf{Right tail estimates}]\label{cor:rtail}
If the distribution $\mu$ is $2$-regular, then for any $p>0$
\[
    \lim_{M\rightarrow\infty}\limsup_{n\rightarrow\infty}\frac{1}{(n\log n)^{p/2}}\mathbb{E}\left[|Z_n-vn|^p1_{\{Z_n-vn\geq M\sqrt{n\log n}\}}\right]=0.
\]
\end{corollary}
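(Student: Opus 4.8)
The plan is to reduce the truncated moment to an integral of the Gaussian tail bound \eqref{rtail-vn} just proved, and then send the truncation level $M$ to infinity. Since $M>0$, on the event $\{Z_n-vn\ge M\sqrt{n\log n}\}$ we have $Z_n-vn>0$, so $|Z_n-vn|^p=(Z_n-vn)^p$ there. Writing $a_n=M\sqrt{n\log n}$ and using the layer-cake identity applied to the nonnegative random variable $(Z_n-vn)_+$,
\[
\mathbb{E}\!\left[|Z_n-vn|^p\,\ind_{\{Z_n-vn\ge a_n\}}\right]
= a_n^p\,\mathbb{P}(Z_n-vn\ge a_n)+\int_{a_n}^{\infty} p\,x^{p-1}\,\mathbb{P}(Z_n-vn\ge x)\,\ud x .
\]

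Next I would insert the bound $\mathbb{P}(Z_n-vn\ge x)\le Ce^{-cx^2/(n\log n)}$, which by \eqref{rtail-vn} holds for all $x>0$ once $n$ is large enough, and then perform the change of variables $x=y\sqrt{n\log n}$. After dividing by $(n\log n)^{p/2}$, the boundary term is bounded by $CM^pe^{-cM^2}$ and the integral term becomes $Cp\int_M^{\infty}y^{p-1}e^{-cy^2}\,\ud y$, neither of which depends on $n$. Hence
\[
\limsup_{n\to\infty}\frac{1}{(n\log n)^{p/2}}\,\mathbb{E}\!\left[|Z_n-vn|^p\,\ind_{\{Z_n-vn\ge a_n\}}\right]
\le CM^p e^{-cM^2}+Cp\int_M^{\infty}y^{p-1}e^{-cy^2}\,\ud y .
\]
Letting $M\to\infty$, the first term vanishes because exponential decay dominates the polynomial factor, and the second vanishes because $\int_0^{\infty}y^{p-1}e^{-cy^2}\,\ud y<\infty$ is a convergent Gamma integral, so its tail tends to $0$; this yields the claimed double limit.

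There is essentially no serious obstacle: the corollary is a routine consequence of the lemma via the standard tail-integration argument. The only points deserving a line of care are that \eqref{rtail-vn} is asserted only for $n$ sufficiently large (harmless, since we take a $\limsup$ in $n$), and that the rescaling must be carried out so that the intermediate bounds are genuinely free of $n$ before $M\to\infty$ is taken; one also notes that $\mathbb{P}(Z_n-vn\ge x)=0$ for $x>(1-v)n$, so \eqref{rtail-vn} does hold for all $x>0$ as stated.
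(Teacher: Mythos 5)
Your argument is correct and is exactly the route the paper intends: the paper states the corollary as an ``immediate consequence'' of the Gaussian right-tail bound \eqref{rtail-vn}, and your layer-cake decomposition plus the change of variables $x=y\sqrt{n\log n}$ is the standard way to make that immediate consequence explicit. The two points you flag (the bound holding only for large $n$, and $|Z_n-vn|\le(1-v)n$ on the right tail) are handled correctly, so there is nothing to add.
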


\subsection{Left tail estimates}

The left tail asymptotics of $Z_n-vn$ are much more delicate than the right tail asymptotics.
We can use the annealed large deviation principle to get good bounds on the left tail probabilities $\P(Z_n - vn\leq -x)$, but only when $x \gg nv$ since the annealed large devation rate function is zero on $[0,v]$ (see Theorem \ref{thm:Zn-bor} or \cite{CGZ00}). 
On the other hand, the estimates of Buraczewski and Dyszewski stated in \eqref{prectail-speedcenter}
allow us to get very precise estimates for these probabilities, but only covering $x \in [\sqrt{n}\log^3 n , nv - \log n]$. 
The following is a rougher estimate but allows us to give a useful upper bound for any 
$x \in (0,nv/2]$.


\begin{proposition}[\textbf{General left tail estimates}]\label{general ltail} If the distribution $\mu$ is $2$-regular, then 
there exists a constant $C>0$ such that for $n$ large enough we have
\begin{equation}\label{Zltail}
    \mathbb{P}(Z_n-vn\leq -t\sqrt{n\log n})\leq \frac{C}{t^2(\log n)} + \frac{C}{t^4}, \qquad \text{for } 0< t \leq \frac{v}{2} \sqrt{\frac{n}{\log n}}.
\end{equation}
\end{proposition}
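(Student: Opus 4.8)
\textbf{Proof proposal for Proposition \ref{general ltail}.}

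The plan is to obtain the left tail bound via regeneration times, exactly as in the right tail estimate, but now exploiting the heavy ($x^{-2}$) tail of the regeneration times. Fix $0 < t \leq \tfrac{v}{2}\sqrt{n/\log n}$ and write $x = t\sqrt{n\log n}$, so that $x \in (0, nv/2]$. The starting observation is that if the walk has traveled less than $vn - x$ by time $n$, then either the first regeneration distance is already atypically large, or the number of regenerations completed by time $n$ is atypically small. More precisely, with $m = m(x,n) := \lfloor (vn - x/2)/\bEE[Z_{R_1}]\rfloor$ (chosen so that $c_1 n \le m \le n/c_1$ for all $t$ in the allowed range, using $x \le nv/2$), one has the inclusion
\[
\{Z_n - vn \le -x\} \subseteq \{Z_{R_1} > \tfrac{x}{4}\} \cup \{R_{m+1} - R_1 > n\} \cup \{Z_{R_{m+1}} - Z_{R_1} \le vn - \tfrac{x}{2}\},
\]
since if none of the three events on the right occurs then $Z_n \ge Z_{R_m} = (Z_{R_{m+1}} - Z_{R_1}) + Z_{R_1} - (Z_{R_{m+1}} - Z_{R_m}) > vn - x$ for large $n$; the exact bookkeeping of indices and the $\bEE[Z_{R_1}]$ versus $\bEE[R_1]$ conversion via \eqref{vform-reg} is routine. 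The first event contributes at most $Ce^{-cx}$ by the exponential tail \eqref{eq:light_tails_1}, which is negligible compared to the claimed bound. The third event, after centering, is a lower-tail large deviation for a sum of $m$ i.i.d.\ random variables with exponential tails and positive mean, hence also at most $e^{-cx^2/n} = e^{-ct^2\log n}$, again negligible.

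The real content is the middle term $\bPP(R_{m+1} - R_1 > n)$, i.e.\ the probability that a sum $S_m$ of $m$ i.i.d.\ copies of $R_1$ under $\bPP$ exceeds $n$. Here $m \asymp n$ but $\bEE[R_1] = \infty$ (since $\kappa = 2$ is the boundary case where $\P(R_1 > k) \sim Ck^{-2}$, so $R_1$ just fails to have finite mean in a logarithmic sense — in fact $\bEE[R_1 \wedge n] \sim C\log n$). Thus $\bEE[S_m] \asymp n\log n \gg n$, so $\{S_m > n\}$ is in fact a \emph{lower} deviation, or more precisely a \emph{central/small-deviation} event for this heavy-tailed sum, and this is where the two terms $t^{-2}(\log n)^{-1}$ and $t^{-4}$ in \eqref{Zltail} will come from. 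The strategy is a one-step truncation: write $R_1 = R_1\ind_{R_1 \le n} + R_1\ind_{R_1 > n}$. On the event $\{S_m > n\}$ one can isolate the contribution of the large jumps. Either no summand exceeds, say, $x/(8v)$ — in which case $S_m$ is a sum of bounded variables whose truncated mean $m\,\bEE[R_1 \wedge x] \asymp n\log x \asymp n\log n$ is much larger than $n$, so this is a genuine lower large deviation and can be controlled by a one-sided Bernstein/Bennett bound for the truncated variables (whose second moment is $\asymp x$), giving an exponentially small contribution; or at least one summand exceeds that threshold, whose probability is at most $m\,\P(R_1 > x/(8v)) \le C m x^{-2} \asymp n/(t^2 n\log n) = (t^2\log n)^{-1}$. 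Combining, the middle term is $O(t^{-2}(\log n)^{-1})$.

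I would then assemble the three bounds; the dominant one is $t^{-2}(\log n)^{-1}$, and the $t^{-4}$ term is a slack term absorbing the various $e^{-ct^2\log n}$ and lower-order corrections when $t$ is not too large — indeed for $t \ge (\log n)^{1/4}$ one has $t^{-4} \le (\log n)^{-1}$ and the Gaussian-type terms are $\le n^{-ct^2} \le t^{-4}$, while for small $t$ the bound $Ct^{-4}$ dominates and is trivially true once one notes the probability is at most $1$ and $t^{-4} \ge$ a constant. The main obstacle I anticipate is the careful treatment of the truncated-sum lower deviation: one needs the correct two-sided control on $\bEE[R_1 \wedge x]$ (the logarithmic growth is exactly what makes $\bEE[S_{m}\wedge \cdot] \gg n$) together with a Bennett-type inequality valid in the regime where the deviation is a constant fraction of the truncated mean. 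This is delicate because the truncation level $x/(8v)$ itself depends on $t$, so the variance estimate and the Bernstein bound must be tracked uniformly in $t \le \tfrac{v}{2}\sqrt{n/\log n}$; I expect this to be packaged as a separate lemma on lower tails of sums of i.i.d.\ positive random variables with $x^{-2}$ tails, analogous to the Corollary \ref{cor:lthtsum} already invoked for the right tail.
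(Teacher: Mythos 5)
Your top-level strategy (regeneration decomposition with $m\asymp n$ regeneration blocks, exponential control of the displacement sums, and a heavy-tail estimate for the regeneration-time sum) is the same as the paper's, but the heart of your argument rests on a false premise. For $\kappa=2$ one has $\bPP(R_1>k)\sim Ck^{-2}$, so $\bEE[R_1]<\infty$; it is the truncated \emph{second} moment, $\bEE[R_1^2\ind_{R_1\le x}]\asymp\log x$, that diverges logarithmically, not the mean (you have in effect described the case $\kappa=1$; note also that $\bEE[R_1]=\infty$ would force $v=0$ by \eqref{vform-reg}, contradicting ballisticity at $\kappa=2$). Consequently $\{S_m>n\}$ is not a ``central/lower-deviation'' event for a sum of mean $\asymp n\log n$: with your $m\asymp n$ the mean of $S_m$ is $\approx n-x/(2v)$ and the event is an \emph{upper} deviation by $\asymp x=t\sqrt{n\log n}$. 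Your treatment of it is then doubly flawed: (i) if the mean really were $\gg n$, the event would have probability close to one and the proposition could not be proved along this route; (ii) the ``no big jump'' part is not exponentially small --- after truncating at level $\asymp x$ the summands are bounded by $\asymp x$ with individual variance $\asymp\log n$, so a Bernstein/Bennett bound gives only $\exp(-ct^2/(1+t^2))$, which is useless for large $t$. The correct bound for the truncated, centered sum is polynomial and is precisely where the $C/t^4$ in \eqref{Zltail} comes from: a fourth-moment Chebyshev estimate yields $C/(t^2\log n)+C/t^4$ (this is the paper's Lemma \ref{Snltail}); the $t^{-4}$ term is not a slack term absorbing Gaussian-type errors, as you suggest. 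The only piece of your middle-term analysis that survives is the single-big-jump bound $m\,\bPP(R_1>x/(8v))\le Cmx^{-2}\asymp 1/(t^2\log n)$.

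There are also two gaps in the decomposition itself. First, to use $Z_n\ge Z_{R_{m+1}}$ you need $R_{m+1}\le n$, but your three events control only $Z_{R_1}$ and $R_{m+1}-R_1$; nothing bounds the time $R_1$, so the stated inclusion fails. You must add an event such as $\{R_1>\frac{t}{2}\sqrt{n\log n}\}$, and since under $\P$ the time $R_1$ has only moments of order $<\kappa=2$, this term is not exponentially negligible: Markov gives $C/(t\sqrt{n\log n})\le C/(t^2\log n)$ for $t\le\frac{v}{2}\sqrt{n/\log n}$, i.e.\ it is of the same order as the main term (it is the first term in the paper's proof). Second, the bookkeeping is not as routine as claimed: with your choice $m=\fl{(vn-x/2)/\bEE[Z_{R_1}]}$ one has $\E[Z_{R_{m+1}}-Z_{R_1}]\approx vn-x/2$, so your third event asks for essentially no deviation below its mean and has probability close to $1/2$, not $e^{-cx^2/n}$; one needs $m$ chosen so that both the time event and the displacement event involve deviations of order $x$, e.g.\ the paper's $m=\fl{(n-t\sqrt{n\log n})/\bEE[R_1]}$, after which the displacement block is handled by a standard Cram\'er/Petrov estimate and the time block by Lemma \ref{Snltail}.
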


\begin{proof}[Proof of Proposition \ref{general ltail}]
 By taking the constant $C>1$, the bound in \eqref{Zltail} becomes trivial for $t \leq 1/\sqrt{\log n}$. Thus, for the remainder of the proof we will assume that $\frac{1}{\sqrt{\log n}} \leq t \leq \frac{v}{2} \sqrt{\frac{n}{\log n}}$. 
 Letting
 $m = m(n,t) = \fl{ \frac{1}{\bEE[R_1]}(n-t\sqrt{n\log n} ) }$ we have
 \begin{align}
  &\P\left( Z_n - nv \leq -t \sqrt{n\log n} \right) \nonumber \\
  &\leq \P\left( R_{m+1} > n \right) + \P\left( Z_{R_{m+1}} \leq nv - t\sqrt{n\log n} \right) \nonumber \\ 
  &\leq \P\left( R_1 > \frac{t}{2} \sqrt{n\log n} \right) + \bPP\left( R_{m} > n - \frac{t}{2} \sqrt{n \log n}  \right) \nonumber 
  + \bPP\left( Z_{R_{m}} \leq nv - t\sqrt{n\log n} \right) \nonumber \\
  &\leq \P\left( R_1 > \frac{t}{2} \sqrt{n\log n} \right) + \bPP\left( R_m - \bEE[R_m] > \frac{t}{2}\sqrt{n\log n} \right) \label{Zltail12} \\
  &\qquad + \bPP\left( Z_{R_m} - \bEE[ Z_{R_m} ] \leq -\left( \frac{1-v}{2} \right) t \sqrt{n\log n} \right). \label{Zltail3}
 \end{align}
Since 
$\E[R_1^\gamma] < \infty$ for all $\gamma < \kappa = 2$, it follows that 
the first term in \eqref{Zltail12} is bounded by 
$
\frac{2\E[R_1]}{t\sqrt{n\log n}} \leq \frac{v\E[R_1]}{t^2\log n}$ for all $t\leq \frac{v}{2} \sqrt{\frac{n}{\log n}}$. 
For the second term in \eqref{Zltail12}, since  $R_m - \bEE[R_m]$ is the sum of i.i.d.\ terms with tail decay $\bPP(R_1 > x) \sim C x^{-2}$ and since $\frac{m}{n} = \frac{m(n,t)}{n}$ is uniformly bounded away from $0$ and $\infty$ for $t \leq \frac{v}{2}\sqrt{\frac{n}{\log n}}$, by applying Lemma \ref{Snltail} we obtain that 
\[
 \bPP\left( R_m - \bEE[R_m] > \frac{t}{2}\sqrt{n\log n} \right)
 \leq \bPP\left( R_m - \bEE[R_m] > C t \sqrt{m\log m} \right) 
 \leq \frac{C}{t^2 \log n} + \frac{C}{t^4}. 
\]
Finally, since $Z_{R_m}-\bEE[Z_{R_m}]$ is the sum of i.i.d.\ terms with exponential tails, standard large deviation estimates (see \cite[Theorem III.15]{Petrov}) again imply that for $n$ large enough and  $t\leq \frac{v}{2} \sqrt{\frac{n}{\log n}}$ the probability in \eqref{Zltail3} is bounded above by 
\[
 \exp\left\{ -C \frac{t^2 n \log n}{m \vee t \sqrt{n\log n}} \right\} 
 \leq \exp\left\{ -C t^2 \log n \right\} \leq \frac{C}{t^2 \log n}. 
\]
(We are again using here that $m/n$ is bounded away from $0$ and $\infty$.)
 This completes the proof of the Proposition.  
\end{proof}

A consequence of the above left tail estimates for the RWRE is the following lemma which is the key to the proof of Theorem \ref{thm:EZn2lim}. 
\begin{lemma}\label{Z2left}
If the distribution $\mu$ is $2$-regular, then  
\begin{equation}\label{K0vlim}
   \lim_{M\to\infty} \limsup_{n\rightarrow\infty}  \left| \frac{1}{n\log n}\mathbb{E}\left[(Z_n-vn)^21_{\{Z_n-vn\leq -M\sqrt{n\log n}\}}\right] - K_0 v \right| = 0, 
\end{equation}
where $K_0$ is the constant from \eqref{prectail-speedcenter} and $v$ is the limiting speed of the RWRE as in \eqref{speed-limit}. 
\end{lemma}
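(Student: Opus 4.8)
The plan is to write the truncated second moment as an integral of left-tail probabilities of the RWRE and then evaluate that integral, using the precise asymptotics \eqref{prectail-speedcenter} on the window of scales where they are available and the cruder estimate \eqref{Zltail} from Proposition~\ref{general ltail} to control what lies outside that window. Concretely, set $W_n := vn - Z_n$, so that $(Z_n-vn)^2 = W_n^2$ and $\{Z_n - vn \leq -M\sqrt{n\log n}\} = \{W_n \geq M\sqrt{n\log n}\}$; the layer-cake identity $\E[W_n^2 \ind_{\{W_n \geq a\}}] = a^2 \P(W_n \geq a) + \int_a^\infty 2x\, \P(W_n \geq x)\,dx$ (valid for any $a>0$) combined with the change of variables $x = t\sqrt{n\log n}$ gives
\[
 \frac{1}{n\log n}\,\E\big[(Z_n-vn)^2 \ind_{\{Z_n - vn \leq -M\sqrt{n\log n}\}}\big]
 = M^2 p_n(M) + \int_M^\infty 2t\, p_n(t)\,dt ,
\]
where $p_n(t) := \P(Z_n - vn \leq -t\sqrt{n\log n})$. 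I would then split the integral at $A_n := \log^{5/2}n$ and $B_n := (nv-\log n)/\sqrt{n\log n}$, which are the values of $t$ for which $x = t\sqrt{n\log n}$ equals the endpoints $\sqrt{n}\log^3 n$ and $nv-\log n$ of the window in \eqref{prectail-speedcenter}.

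\textbf{The error terms.} The boundary term $M^2 p_n(M)$ and the pieces $\int_M^{A_n}$ and $\int_{B_n}^\infty$ should all be negligible after taking $\limsup_{n\to\infty}$ and then $M\to\infty$. Applying Proposition~\ref{general ltail} with parameter $M$ gives $M^2 p_n(M) \leq \frac{C}{\log n} + \frac{C}{M^2}$. Since $A_n \leq \frac{v}{2}\sqrt{n/\log n}$ for $n$ large, Proposition~\ref{general ltail} applies throughout $[M,A_n]$ and yields $0 \leq \int_M^{A_n}2t\, p_n(t)\,dt \leq \int_M^{A_n}\big(\frac{2C}{t\log n} + \frac{2C}{t^3}\big)\,dt \leq \frac{C\log\log n}{\log n} + \frac{C}{M^2}$. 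For $t > B_n$ one has $vn - t\sqrt{n\log n} < \log n < \frac{v}{2}n$, hence $p_n(t) \leq \P(Z_n \leq \frac{v}{2}n) = \P\big(Z_n - vn \leq -\frac{v}{2}\sqrt{n/\log n}\cdot\sqrt{n\log n}\big) \leq \frac{C}{n}$ by Proposition~\ref{general ltail} once more; since moreover $p_n(t) = 0$ for $t > (1+v)\sqrt{n/\log n}$ (because $Z_n \geq -n$), the far tail satisfies $\int_{B_n}^\infty 2t\, p_n(t)\,dt \leq \frac{C}{n}\int_0^{(1+v)\sqrt{n/\log n}}2t\,dt = O(1/\log n)$.

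\textbf{The main term.} For $t \in [A_n,B_n]$ the point $x = t\sqrt{n\log n}$ lies in the window of \eqref{prectail-speedcenter}, so $p_n(t) = \big(K_0 + \gamma_n(t)\big)\frac{nv - t\sqrt{n\log n}}{t^2\, n\log n}$ with $\sup_{t\in[A_n,B_n]}|\gamma_n(t)| =: \delta_n \to 0$, and $nv - t\sqrt{n\log n}\ge 0$ on this range. Substituting,
\[
 \int_{A_n}^{B_n}2t\, p_n(t)\,dt = \frac{2}{\log n}\int_{A_n}^{B_n}\big(K_0 + \gamma_n(t)\big)\Big(\frac{v}{t} - \sqrt{\frac{\log n}{n}}\,\Big)\,dt .
\]
The contribution of the $\sqrt{\log n/n}$ term is $O(1/\log n)$ because $B_n - A_n \leq v\sqrt{n/\log n}$; the contribution of $\gamma_n$ is at most $\frac{2v\delta_n}{\log n}\log(B_n/A_n)$, which vanishes since $\log(B_n/A_n)/\log n$ stays bounded; and the remaining main contribution is $\frac{2K_0 v}{\log n}\log(B_n/A_n)$, where the direct computation $\log(B_n/A_n) = \frac12\log n + O(\log\log n)$ shows it tends to $K_0 v$. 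Combining the three parts gives $\limsup_{n\to\infty}\big|\frac{1}{n\log n}\E[(Z_n-vn)^2 \ind_{\{Z_n - vn \leq -M\sqrt{n\log n}\}}] - K_0 v\big| \leq C/M^2$, which tends to $0$ as $M\to\infty$, proving \eqref{K0vlim}.

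\textbf{Main obstacle.} The subtle point is that the value $K_0 v$ is not produced by the typical scale of a slowdown but is spread logarithmically over the whole range $t \in [\,\mathrm{const},\, \sqrt{n/\log n}\,]$: the integrand $2t\, p_n(t)$ is of order $2K_0 v/(t\log n)$, which is essentially scale-invariant, so only the combination of the precise window endpoints in \eqref{prectail-speedcenter} --- which differ from the naive range $[M,\infty)$ by the \emph{not} asymptotically negligible factors $\log^{5/2}n$ at the bottom and the $\log n$ gap near $nv$ at the top --- together with the prefactor $1/\log n$ conspires to give a finite limit, and pinning the constant down requires carefully tracking these endpoints and verifying that the $O(\log\log n/\log n)$ corrections disappear. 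Correspondingly, Proposition~\ref{general ltail} must be sharp enough to do two jobs simultaneously: below the window, the bound $t^{-2}(\log n)^{-1} + t^{-4}$ must make $\int_M^{A_n}$ uniformly small, and above the window (where $Z_n$ would essentially have to be non-positive) the crude bound $\P(Z_n \leq \frac v2 n) = O(1/n)$ must kill the contribution of a $t$-range of length of order $\sqrt{n/\log n}$.
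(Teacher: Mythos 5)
Your proposal is correct and follows essentially the same route as the paper's proof: the layer-cake identity for the truncated second moment, a split of the resulting integral exactly at the endpoints $\sqrt{n}\log^3 n$ and $nv-\log n$ of the window in \eqref{prectail-speedcenter}, Proposition~\ref{general ltail} for the boundary term and the piece below the window, and the precise asymptotics for the middle piece, which produces $K_0v$. The only (immaterial) difference is in the far-tail piece, where you invoke Proposition~\ref{general ltail} at $t=\tfrac{v}{2}\sqrt{n/\log n}$ to get an $O(1/\log n)$ bound while the paper uses \eqref{prectail-speedcenter} at $x=nv-\log n$ to get $O(1/n)$; both vanish, so the argument is sound.
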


\begin{proof}
For any fixed $M$,  noting that since $|Z_n|\leq n$,  we have that 
\begin{align*}
& \frac{1}{n\log n}\mathbb{E}\left[|Z_n-vn|^2 1_{\{Z_n-vn\leq -M\sqrt{n\log n}\}}\right] \nonumber \\
 &= M^2 \P( Z_n - vn \leq -M\sqrt{n\log n} ) + \frac{1}{n\log n}\int_{M \sqrt{n\log n}}^{(1+v)n} 2x \, \mathbb{P}(Z_n-vn\leq-x) \,  \ud x.
 \end{align*}
Using Proposition \ref{general ltail} we have that
$M^2 \P( Z_n - vn \leq -M\sqrt{n\log n} ) \leq \frac{C}{\log n} + \frac{C}{M^2}$,
and therefore to prove \eqref{K0vlim} it suffices to 
control the integral term above.
Splitting this integral term into three parts we may write
\begin{equation}\label{3parts}
    \begin{aligned}
    &\frac{1}{n\log n} \int_{M \sqrt{n\log n}}^{(1+v)n} 2x \, \mathbb{P}(Z_n-vn\leq-x) \,  \ud x  \\
    &\qquad = \frac{1}{n\log n} \bigg\{\int_{M \sqrt{n\log n}}^{\sqrt{n}\log^3 n}  +\int_{\sqrt{n}\log^3 n}^{vn-\log n} +\int_{vn-\log n}^{(1+v)n} \bigg\}2x \, \mathbb{P}(Z_n-vn\leq-x) \, \ud x.
    \end{aligned}
\end{equation}
Using the precise tail estimates in \eqref{prectail-speedcenter} one can see that the middle term above converges to $K_0v$ as $n\to\infty$, and thus one needs only to show that the first and third integral terms vanish as $n\to\infty$ and then $M\to\infty$. This can be verified by using the general tail bounds in Proposition \ref{general ltail} for the probabilities in the first integral and by bounding all the probabilities in the third integral by
$\P(Z_n - vn < -vn+\log n) \leq \frac{2 K_0 \log n}{(nv-\log n)^2} \leq C \frac{\log n}{n^2}$, where the upper bound on these probabilities hold for $n$ large by \eqref{prectail-speedcenter}.  
\end{proof}

\subsection{Proof of Theorem \texorpdfstring{\ref{thm:EZn2lim}}{1.11} and centered tail estimates}\label{sec:cente}

Having obtained the left and right tail estimates in the previous two sections, we are now ready to give the proof of Theorem \ref{thm:EZn2lim}. 

\begin{proof}[Proof of Theorem \ref{thm:EZn2lim}]
First of all, note that for $M$ fixed we have
 \begin{align*}
  \left| \frac{\E[(Z_n-nv)^2]}{n\log n} - (b^2+K_0 v) \right|
  &\leq \left| \frac{\E\left[ (Z_n-nv)^2 \ind_{|Z_n-nv| < M\sqrt{n\log n}} \right] }{n\log n} - b^2 E[\Phi^2 \ind_{|\Phi| < M}] \right| \\
  &\qquad + \left| \frac{\E\left[ (Z_n-nv)^2 \ind_{Z_n-nv \leq -M\sqrt{n\log n}} \right] }{n\log n} - K_0 v \right| \\
  &\qquad + b^2 E[\Phi^2 \ind_{|\Phi| \geq M}] 
  + \frac{\E\left[ (Z_n-nv)^2 \ind_{Z_n-nv \geq M\sqrt{n\log n}} \right] }{n\log n}.
\end{align*}
It follows from \eqref{gaus-static-scaling} and the bounded convergence theorem that the first term on the right vanishes as $n\to\infty$ for any fixed $M$. 
Then, it follows from  Corollary \ref{cor:rtail} and Lemma \ref{Z2left} that the last three terms on the right can be made arbitrarily small as $n\to\infty$ by fixing $M$ large enough. This completes the proof of the theorem. 
\end{proof}

The left and right tail asymptotics for the RWRE proved above were for the random walk centered by its limiting speed. However, since results of Theorem \ref{thm:RWCRElim-k2} are proved for the RWCRE centered by its mean, we will need some results on the RWRE centered by its mean as well. 
Since it follows from Corollary \ref{k2-meanvar} that $\E[Z_n] = nv + o(\sqrt{n\log n})$, we can then easily obtain the following analogs of the above results for the RWRE centered by its mean (with different constants and sometimes slightly smaller ranges to which the tail estimates apply). 

\begin{corollary}\label{cor:centered}
Let $(Z_n)_{n\geq 0}$ be a RWRE with a $2$-regular distribution $\mu$ on environments. Then the following right and left tail estimates hold for the centered RWRE.  
\begin{itemize}
 \item \textbf{Right tail estimate:} There exist constants $C,c$ such that for $n$ sufficiently large
\begin{equation}\label{rtail-cen}
\mathbb{P}(Z_n-\mathbb{E}Z_n\geq x) \leq C e^{-\frac{c x^2}{n\log n}}, \quad \forall 
x>0. 
\end{equation}
 \item \textbf{General left tail estimate:} There is a constant $C$ such that for $n$ sufficiently large 
 \begin{equation}\label{cgeneral ltail}
    \mathbb{P}(Z_n-\mathbb{E}Z_n\leq -t\sqrt{n \log n}) \leq \frac{C}{t^2 \log n} + \frac{C}{t^4}, \qquad \text{for } 1 \leq t \leq \frac{v}{2} \sqrt{\frac{n}{\log n}}. 
\end{equation}
 \item \textbf{Precise left tail estimate:} With $K_0$ the same constant as in \eqref{prectail-speedcenter}, we have 
\begin{equation}\label{cprecise ltail}
    \lim_{n\rightarrow\infty}\sup_{\sqrt{n}\log^4n \leq x \leq nv-\sqrt{n \log n}}\Big|\frac{\mathbb{P}(Z_n-\mathbb{E}Z_n\leq -x)}{(nv-x)x^{-2}}-K_0\Big|=0.
\end{equation}
\end{itemize}
\end{corollary}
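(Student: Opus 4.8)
The plan is to obtain the three tail estimates for the RWRE centered by its mean from the corresponding estimates centered by the speed $nv$, using the asymptotics $\E[Z_n] = nv + o(\sqrt{n\log n})$ from Corollary~\ref{k2-meanvar}. Write $c_n := \E[Z_n] - nv$, so that $|c_n| = o(\sqrt{n\log n})$, and note the simple identity $Z_n - \E[Z_n] = (Z_n - nv) - c_n$. The idea in each case is that shifting the centering by the lower-order quantity $c_n$ only perturbs the relevant thresholds by a vanishing multiplicative factor, which can be absorbed into the constants.

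For the right tail \eqref{rtail-cen}: given $x>0$, we have $\P(Z_n - \E[Z_n] \geq x) = \P(Z_n - nv \geq x + c_n)$. When $x \geq 2|c_n|$ we get $x + c_n \geq x/2$, so by \eqref{rtail-vn} this is at most $C e^{-c (x/2)^2/(n\log n)} = C e^{-c' x^2/(n\log n)}$; and when $x < 2|c_n| = o(\sqrt{n\log n})$ the exponent $c' x^2/(n\log n)$ is bounded, so the bound $Ce^{-c'x^2/(n\log n)}$ holds trivially after enlarging $C$. For the general left tail \eqref{cgeneral ltail}: for $t \geq 1$ write $\P(Z_n - \E[Z_n] \leq -t\sqrt{n\log n}) = \P(Z_n - nv \leq -t\sqrt{n\log n} + c_n) = \P(Z_n - nv \leq -t'\sqrt{n\log n})$ with $t' = t - c_n/\sqrt{n\log n}$. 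Since $c_n/\sqrt{n\log n}\to 0$, for $n$ large we have $t' \geq t/2$ and also $t' \leq \tfrac{v}{2}\sqrt{n/\log n}$ when $t \leq \tfrac{v}{2}\sqrt{n/\log n}$ (up to shrinking the range by a constant factor, which does not affect the statement since the claimed bound $C/(t^2\log n) + C/t^4$ is increasing in the constant), so Proposition~\ref{general ltail} gives the bound $\frac{C}{(t')^2\log n} + \frac{C}{(t')^4} \leq \frac{4C}{t^2\log n} + \frac{16C}{t^4}$.

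For the precise left tail \eqref{cprecise ltail}: for $x$ in the stated range $\sqrt{n}\log^4 n \leq x \leq nv - \sqrt{n\log n}$ we have $\P(Z_n - \E[Z_n] \leq -x) = \P(Z_n - nv \leq -(x + c_n)) = \P(Z_n - nv \leq -x')$ with $x' = x + c_n$ and $|c_n| = o(\sqrt{n\log n})$. Since $x \geq \sqrt{n}\log^4 n \gg \sqrt{n}\log^3 n$ and $x' = x(1 + o(1))$ uniformly (because $|c_n|/x \leq |c_n|/(\sqrt{n}\log^4 n) \to 0$), and similarly $x' \leq nv - \log n$ for $n$ large (since $x \leq nv - \sqrt{n\log n}$ and $|c_n| = o(\sqrt{n\log n})$), the value $x'$ lies in the range of validity of \eqref{prectail-speedcenter}. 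Hence $\P(Z_n - nv \leq -x') = (K_0 + o(1))(nv - x')(x')^{-2}$ uniformly; and since $x' = x + c_n$ with $|c_n| = o(x)$ and $|c_n| = o(nv - x)$ (the latter because $nv - x \geq \sqrt{n\log n} \gg |c_n|$), one checks $(nv - x')(x')^{-2} = (nv - x)x^{-2}(1 + o(1))$ uniformly, giving $\frac{\P(Z_n - \E[Z_n] \leq -x)}{(nv - x)x^{-2}} \to K_0$ uniformly over the range.

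The only slightly delicate point is the last one: keeping track that the lower-order shift $c_n$ is not merely $o(\sqrt{n\log n})$ but is genuinely negligible compared to \emph{both} $x$ and $nv - x$ throughout the range $\sqrt{n}\log^4 n \leq x \leq nv - \sqrt{n\log n}$ — this is why the lower endpoint of the range in \eqref{cprecise ltail} is $\sqrt{n}\log^4 n$ rather than $\sqrt{n}\log^3 n$ and the upper endpoint is $nv - \sqrt{n\log n}$ rather than $nv - \log n$: the extra room absorbs the error $c_n$. Everything else is a routine constant-chasing exercise, so no further detail is needed.
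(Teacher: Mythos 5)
Your proposal is correct and is exactly the paper's (unwritten) argument: the paper derives Corollary~\ref{cor:centered} from \eqref{rtail-vn}, Proposition~\ref{general ltail}, and \eqref{prectail-speedcenter} precisely by absorbing the shift $\E[Z_n]-nv = o(\sqrt{n\log n})$ into the constants and into the slightly reduced ranges, which is why the ranges there are $t\geq 1$, $\sqrt{n}\log^4 n$, and $nv-\sqrt{n\log n}$. The only blemish is a harmless sign slip in the precise left tail (it should be $x' = x - c_n$ rather than $x+c_n$), which changes nothing since your estimates only use $|c_n| = o(x)$ and $|c_n| = o(nv-x)$.
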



We close this section with some consequences of the above tail estimates for certain truncated first and second moments of the RWRE which will be needed for our proof of Theorem \ref{thm:RWCRElim-k2}. 
For these lemmas (and throughout Section \ref{sec:k2limdist}), recall from Section \ref{sec:notation} that $\tilde{Z}_n  = Z_n - \E[Z_n]$ is the notation for the centered version of the RWRE.

\begin{lemma}\label{cor-L1tail}
There exists a constant $C>0$ such that for $n$ sufficiently large and $a \geq \sqrt{n}\log^4 n$, 
\begin{equation}\label{L1tail}
\E\left[ |\tilde{Z}_n| \ind_{|\tilde{Z}_n| > a} \right] \leq \frac{Cn}{a}
\end{equation}
\end{lemma}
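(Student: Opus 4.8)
The plan is to bound the truncated first moment by integrating the tail estimates for the centered RWRE from Corollary \ref{cor:centered}. Writing $\tilde{Z}_n = Z_n - \E[Z_n]$ and recalling that $|\tilde{Z}_n| \leq 2n$ for $n$ large (since $|Z_n| \leq n$ and $|\E[Z_n]| \leq n$), I would start from the layer-cake formula
\[
\E\left[ |\tilde{Z}_n| \ind_{|\tilde{Z}_n| > a} \right] = a\, \P(|\tilde{Z}_n| > a) + \int_a^{2n} \P(|\tilde{Z}_n| > x)\, \ud x,
\]
and then split $\P(|\tilde{Z}_n| > x) \leq \P(\tilde{Z}_n > x) + \P(\tilde{Z}_n < -x)$, handling the right tail and left tail separately.

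For the right tail, \eqref{rtail-cen} gives $\P(\tilde{Z}_n \geq x) \leq C e^{-c x^2/(n\log n)}$ for all $x > 0$, and since $a \geq \sqrt{n}\log^4 n$ this is super-polynomially small on the whole range $x \geq a$; in particular $a\,\P(\tilde{Z}_n \geq a)$ and $\int_a^{2n} \P(\tilde{Z}_n \geq x)\, \ud x$ are both $o(1)$, hence certainly $\leq Cn/a$ since $n/a \geq 1/(\log^4 n) \to 0$ is not quite enough — so I should be slightly careful and just note that the Gaussian right-tail contribution is bounded by a constant (indeed by $o(1)$), which is dominated by $Cn/a$ only if $a = O(n)$; since $a \leq 2n$ is the relevant range (the truncation is vacuous for $a > 2n$), we have $n/a \geq 1/2$ there, so the constant bound is absorbed. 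For the left tail, substitute $x = t\sqrt{n\log n}$: on the range $1 \leq t \leq \frac{v}{2}\sqrt{n/\log n}$ (equivalently $\sqrt{n\log n} \leq x \leq \frac{v}{2} n$) the general left tail estimate \eqref{cgeneral ltail} gives $\P(\tilde{Z}_n < -x) \leq \frac{C}{t^2 \log n} + \frac{C}{t^4}$, and integrating,
\[
\int_a^{vn/2} \P(\tilde{Z}_n < -x)\, \ud x = \sqrt{n\log n}\int_{a/\sqrt{n\log n}}^{\frac{v}{2}\sqrt{n/\log n}} \left( \frac{C}{t^2\log n} + \frac{C}{t^4} \right) \ud t \leq \frac{C\sqrt{n\log n}}{(a/\sqrt{n\log n})\log n} = \frac{Cn}{a},
\]
using $\int_{s}^\infty (t^{-2}(\log n)^{-1} + t^{-4})\,\ud t \leq C s^{-1}(\log n)^{-1}$ for $s \geq 1$ (which holds since $s^{-3} \leq s^{-1} \leq s^{-1}(\log n)^{-1}\cdot \log n$; one checks the dominant term is $s^{-1}/\log n$ after noting $s \geq a/\sqrt{n\log n} \geq \log^{3}n \cdot(\log n)^{-1/2}$ is large, so $s^{-3} \ll s^{-1}/\log n$). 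The boundary term $a\,\P(\tilde{Z}_n < -a) \leq a\left(\frac{C n\log n}{a^2 \log n} + \frac{C(n\log n)^2}{a^4}\right) \leq \frac{Cn}{a} + \frac{C(n\log n)^2}{a^3} \leq \frac{Cn}{a}$, again using $a \geq \sqrt{n}\log^4 n$ so that $(n\log n)^2/a^3 \leq n^{-1/2}(\log n)^{-10} \leq n^{-1}\cdot n^{1/2} \ll n/a$. Finally the strip $vn/2 \leq x \leq 2n$ contributes at most $2n \cdot \P(\tilde{Z}_n < -vn/2)$, which by the large deviation bound from Theorem \ref{thm:Zn-bor} (or by \eqref{cgeneral ltail} at its right endpoint) decays exponentially in $n$ and is thus negligible compared to $Cn/a$.

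The main obstacle is purely bookkeeping: one must verify that every error term produced by the boundary contributions and by the "far left" strip is genuinely $\leq Cn/a$ given the hypothesis $a \geq \sqrt{n}\log^4 n$ — in particular that the $t^{-4}$ piece of \eqref{cgeneral ltail}, the $a\,\P(\tilde{Z}_n < -a)$ term, and the Gaussian right tail are all dominated by $n/a$ — and to confirm that the whole argument only ever uses $x$ in a range where \eqref{rtail-cen} and \eqref{cgeneral ltail} are valid (the choice $a \geq \sqrt{n}\log^4 n$ is exactly what puts the substituted variable $t = a/\sqrt{n\log n}$ above the threshold $1$ in \eqref{cgeneral ltail}, up to the harmless $\log^{7/2} n$ margin). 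No new ideas are needed beyond the tail estimates already established.
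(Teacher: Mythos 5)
Your proposal is correct and follows essentially the same route as the paper's proof: restrict attention to $a \lesssim n$ (the truncation being vacuous beyond $2n$), integrate the centered tail bounds of Corollary \ref{cor:centered} over the truncation region, with the Gaussian right tail \eqref{rtail-cen} contributing a negligible amount and the left tail $\sim n/x^2$ integrating to $Cn/a$; the only difference is that you invoke the general left-tail estimate \eqref{cgeneral ltail} where the paper uses the precise estimate \eqref{cprecise ltail}, which is harmless since the extra $t^{-4}$ term is of lower order on the range $a \geq \sqrt{n}\log^4 n$. One small arithmetic slip: in dominating the boundary term you should have $(n\log n)^2/a^3 \leq n^{1/2}(\log n)^{-10}$ (not $n^{-1/2}(\log n)^{-10}$), and that uniform-in-$a$ bound does not by itself dominate $n/a$ (which can be of constant order when $a\asymp n$); the correct comparison is of the ratio, $(n\log n)^2/a^3 = (n/a)\cdot\bigl(n\log^2 n/a^2\bigr) \leq (n/a)\log^{-6}n$, so the conclusion stands.
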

\begin{proof}
First of all, note that since $|\tilde{Z}_n| \leq 2n$ the bound holds trivially for $a \geq 2n$. On the other hand, one easily sees that if \eqref{L1tail} holds for all $a \in [\sqrt{n}\log^4 n, nv/2]$ then by changing the constant $C$ we get that it also holds for $a \in [nv/2,2n]$. Indeed, if $a \in [nv/2,2n]$ then 
\[
\E\left[ |\tilde{Z}_n| \ind_{|\tilde{Z}_n| > a} \right]
\leq \E\left[ |\tilde{Z}_n| \ind_{|\tilde{Z}_n| > nv/2} \right] 
\leq C \frac{n}{nv/2} \leq \frac{(4C/v) n}{a}. 
\]
Thus, for the remainder of the proof we will assume that $\sqrt{n}\log^4 n \leq a \leq nv/2$. 

To bound the expectation in \eqref{L1tail} we first decompose it as $\E\left[ |\tilde{Z}_n| \ind_{|\tilde{Z}_n| > a} \right] = \E\left[ \tilde{Z}_n \ind_{\tilde{Z}_n > a} \right] + \E\left[ (-\tilde{Z}_n) \ind_{\tilde{Z}_n < -a} \right]$. 
For the right truncated expectation, 
using \eqref{rtail-cen} we get that
\begin{align*}
\E\left[ \tilde{Z}_n \ind_{\tilde{Z}_n > a} \right]
&= a \P(\tilde{Z}_n > a) + \int_a^\infty \P(\tilde{Z}_n > x ) \, \ud x \\
&\leq C a e^{-ca^2/(n\log n)} + C \frac{n\log n}{a} e^{-c a^2/(n\log n)}.  
\end{align*}
Since $a\geq \sqrt{n} \log^4 n$ implies that $e^{-ca^2/(n\log n)} \leq e^{-c \log^7 n}$ and since $a \leq nv/2$, it follows that the above bound is less than $\frac{C'n}{a}$ for some $C'<\infty$ for all $n$ sufficiently large and $a \in [\sqrt{n}\log^4 n, nv/2]$.  
For the left truncated expectation, 
using the assumption that $\sqrt{n}\log^4 n \leq a \leq nv/2$ and the precise tail bounds in \eqref{cprecise ltail} we obtain for $n$ sufficiently large that
\begin{align*}
\E\left[ (-\tilde{Z}_n) \ind_{\tilde{Z}_n < -a} \right]
&= a \P( \tilde{Z}_n < -a ) +  \int_a^{2n} \P\left( \tilde{Z}_n < -x \right) \,  \ud x \\
& \leq a \P(\tilde{Z}_n < -a ) + \int_a^{nv-\sqrt{n\log n}} \P\left( \tilde{Z}_n < -x \right) \, \ud x \\
&\qquad + 2n \P\left( \tilde{Z}_n < -(nv-\sqrt{n\log n}) \right) \\
&\leq  \frac{2K_0 v n}{a} + \int_a^{nv-\sqrt{n\log n}} \frac{2K_0 v n}{x^2}  \, \ud x + \frac{8 K_0 \sqrt{\log n} }{v^2\sqrt{n} } \\
&\leq \frac{4K_0 v n}{a} + \frac{8 K_0 \sqrt{\log n} }{v^2 \sqrt{n}}. 
\end{align*}
Finally, note that $a\leq nv/2$ implies that $\frac{\sqrt{\log n}}{\sqrt{n}} \leq \frac{n}{a}$ for $n$ sufficiently large. 
\end{proof}

\begin{lemma}\label{cor-L2trunc}
 If $\sqrt{n \log n} \leq a \leq \sqrt{n}\log^4 n$, then 
 \[
 \E\left[ (\tilde{Z}_n)^2 \ind_{a < |\tilde{Z}_n| \leq \sqrt{n}\log^4 n} \right] \leq C n \log \log n + C \frac{n^2 \log^2 n}{a^2}. 
 \]
\end{lemma}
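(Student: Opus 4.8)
The plan is to bound the truncated second moment by integrating the tail probabilities over the relevant range, using the general left tail estimate \eqref{cgeneral ltail} for the left part and the Gaussian right tail estimate \eqref{rtail-cen} for the right part. First I would write, via the layer-cake formula,
\[
\E\left[ (\tilde{Z}_n)^2 \ind_{a < |\tilde{Z}_n| \leq \sqrt{n}\log^4 n} \right]
\leq a^2 \P(|\tilde{Z}_n| > a) + \int_a^{\sqrt{n}\log^4 n} 2x \, \P(|\tilde{Z}_n| > x) \, \ud x,
\]
and then split $\P(|\tilde Z_n|>x) \leq \P(\tilde Z_n > x) + \P(\tilde Z_n < -x)$ to treat the two tails separately. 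For the right tail, \eqref{rtail-cen} gives $\P(\tilde Z_n > x) \leq C e^{-cx^2/(n\log n)}$; since $x \geq a \geq \sqrt{n\log n}$ the exponent is at least $c$, and a standard Gaussian-type estimate shows the corresponding integral (together with the boundary term) is $O(n)$, which is absorbed into $Cn\log\log n$. So the right tail contributes no real difficulty.

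The main work is the left tail. Writing $x = t\sqrt{n\log n}$, the range $a \leq x \leq \sqrt{n}\log^4 n$ corresponds to $t \in [a/\sqrt{n\log n}, \log^{7/2} n]$, and note $a/\sqrt{n\log n} \geq 1$ so \eqref{cgeneral ltail} applies throughout (one should also check $\log^{7/2} n \leq \frac{v}{2}\sqrt{n/\log n}$, which holds for large $n$). Substituting gives
\[
\frac{1}{n\log n}\int_a^{\sqrt{n}\log^4 n} 2x\, \P(\tilde Z_n < -x)\,\ud x
= \int_{a/\sqrt{n\log n}}^{\log^{7/2} n} 2t\left( \frac{C}{t^2\log n} + \frac{C}{t^4}\right) \ud t,
\]
plus a similarly-scaled boundary term $\frac{a^2}{n\log n}\P(\tilde Z_n < -a) \leq \frac{C}{\log n} + \frac{Cn\log n}{a^2}$ obtained by plugging $t = a/\sqrt{n\log n}$ into \eqref{cgeneral ltail}. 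The first part of the integral is $\frac{2C}{\log n}\int 1/t\,\ud t \leq \frac{2C}{\log n}\log(\log^{7/2} n) = O(\log\log n/\log n)$, and the second part is $2C\int 1/t^3\,\ud t \leq C (a/\sqrt{n\log n})^{-2} = Cn\log n/a^2$. Multiplying back through by $n\log n$ yields the claimed bound $Cn\log\log n + C n^2\log^2 n/a^2$ (the $n$ and $n/\log n$ terms coming from the right tail and the $C/\log n$ boundary piece are lower order and absorbed).

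The only genuinely delicate point is keeping track of which tail estimate is valid on which part of the range and making sure the endpoints line up: \eqref{cgeneral ltail} requires $t \geq 1$, which is exactly guaranteed by the hypothesis $a \geq \sqrt{n\log n}$, and it requires $t \leq \frac{v}{2}\sqrt{n/\log n}$, which is comfortably satisfied since the truncation caps $x$ at $\sqrt{n}\log^4 n \ll nv$. I expect no serious obstacle beyond this bookkeeping; the estimate is a routine consequence of \eqref{cgeneral ltail} and \eqref{rtail-cen} once the integral is set up, and the $\log\log n$ factor is precisely the signature of integrating the $1/(t^2\log n)$ tail over a range of $t$ that extends up to a power of $\log n$.
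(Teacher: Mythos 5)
Your decomposition and the treatment of the left tail are exactly the paper's argument: bound the truncated second moment by the boundary term $a^2\P(|\tilde Z_n|>a)$ plus $\int_a^{\sqrt{n}\log^4 n}2x\,\P(|\tilde Z_n|>x)\,\ud x$, feed in \eqref{rtail-cen} for the right tail and \eqref{cgeneral ltail} for the left tail, and note that $a\ge\sqrt{n\log n}$ both legitimizes \eqref{cgeneral ltail} on the whole range and caps $\log\bigl(\tfrac{\sqrt n\log^4 n}{a}\bigr)$ by $\tfrac72\log\log n$, which is where the $n\log\log n$ term comes from. That part of your write-up is correct and is the substance of the lemma.

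The one slip is your claim that the right-tail contribution (boundary term plus integral) is $O(n)$ and can be absorbed into $Cn\log\log n$. It cannot, uniformly in $a$: both $a^2e^{-ca^2/(n\log n)}$ and $\int_a^{\infty}2x\,Ce^{-cx^2/(n\log n)}\,\ud x\le \tfrac{C}{c}\,n\log n\,e^{-ca^2/(n\log n)}$ are of order $n\log n$ when $a\asymp\sqrt{n\log n}$, which is larger than $n\log\log n$. The statement survives because this contribution is instead dominated by the \emph{second} term in the bound: writing $u=a^2/(n\log n)\ge 1$ and using $e^{-cu}\le (ceu)^{-1}$ (equivalently, the paper's elementary bound $a^4e^{-ca^2/(n\log n)}\le \tfrac{4}{c^2e^2}n^2\log^2 n$, i.e.\ \eqref{bound_a2exp}), one gets $a^2e^{-ca^2/(n\log n)}+n\log n\,e^{-ca^2/(n\log n)}\le C\,n^2\log^2 n/a^2$. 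With that one-line correction, identifying which term of the claimed bound absorbs the Gaussian tail, your proof coincides with the paper's.
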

\begin{proof}
We begin by noting that 
\begin{equation}\label{tL2dec}
\E\left[ (\tilde{Z}_n)^2 \ind_{a < |\tilde{Z}_n| \leq \sqrt{n}\log^4 n} \right] 
\leq a^2 \P( |\tilde{Z}_n| > a ) + \int_a^{\sqrt{n}\log^4 n} 2x \P( |\tilde{Z}_n| > x ) \, \ud x. 
\end{equation}
Using \eqref{rtail-cen} and \eqref{cgeneral ltail} we can bound the first term on the right in \eqref{tL2dec} by
\[
a^2 \P( |\tilde{Z}_n| > a ) \leq C a^2 e^{-\frac{c a^2}{n\log n}} + Cn + C  \frac{n^2\log^2 n}{a^2} 
\leq Cn + C \frac{n^2\log^2 n}{a^2}, 
\]
where the last inequality is justified by noting that (by elementary calculus)
\begin{equation}\label{bound_a2exp}
 a^2 e^{-\frac{ca^2}{n\log n}} = \frac{1}{a^2}\left(a^4 e^{-\frac{ca^2}{n\log n}} \right) \leq \frac{1}{a^2}\left( \frac{4}{e^2 c^2} n^2 \log^2 n \right).   
\end{equation}
For the integral term in \eqref{tL2dec}, again using \eqref{rtail-cen} and \eqref{cgeneral ltail}, we obtain that 
\begin{align*}
\int_a^{\sqrt{n}\log^4 n} 2x \P( |\tilde{Z}_n| > x ) \, \ud x
&\leq C \int_a^{\sqrt{n}\log^4 n} \left\{ x e^{-\frac{c x^2}{n\log n}} + \frac{n}{x} + \frac{n^2\log^2 n}{x^3} \right\} \, \ud x \\
&\leq C \left\{ n\log n \, e^{-\frac{c a^2}{n\log n}} + n \log \left( \frac{\sqrt{n} \log^4 n}{a} \right) + \frac{n^2 \log^2 n}{a^2} \right\} \\
&\leq C \left\{ n \log \log n + \frac{n^2 \log^2 n}{a^2} \right\}, 
\end{align*}
where in the last inequality we used \eqref{bound_a2exp} and that $a\geq \sqrt{n\log n}$ implies $\log\left( \frac{\sqrt{n}\log^4 n}{a} \right) \leq \frac{7}{2} \log \log n$. 
\end{proof}

\begin{lemma}\label{cor-smalltsm}
If $b$ is the scaling constant from the limiting distribution in \eqref{gaus-static-scaling}, then 
\begin{equation}\label{tv-tsm-small}
\lim_{n\to\infty} \frac{\Var\left(\tilde{Z}_n \ind_{|\tilde{Z}_n| \leq \sqrt{n}\log^4 n} \right)}{n\log n} = 
\lim_{n\to\infty} \frac{\E\left[ (\tilde{Z}_n)^2 \ind_{|\tilde{Z}_n| \leq \sqrt{n}\log^4 n} \right]}{n\log n}
= b^2. 
\end{equation}
\end{lemma}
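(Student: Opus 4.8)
The plan is to reduce the statement to the two-moment estimate already contained in Theorem \ref{thm:EZn2lim} and in the truncated tail bounds of Corollary \ref{cor:rtail} and Lemma \ref{Z2left}. First I would observe that the second equality in \eqref{tv-tsm-small} follows from the first once we know $\E[(\tilde Z_n)^2 \ind_{|\tilde Z_n|\le \sqrt n\log^4 n}]/(n\log n)\to b^2$, because $\Var(\tilde Z_n\ind_{|\tilde Z_n|\le \sqrt n\log^4 n}) = \E[(\tilde Z_n)^2\ind_{|\tilde Z_n|\le \sqrt n\log^4 n}] - (\E[\tilde Z_n\ind_{|\tilde Z_n|\le \sqrt n\log^4 n}])^2$, and the subtracted square is $o(n\log n)$: indeed $\E[\tilde Z_n]=0$, so $\E[\tilde Z_n\ind_{|\tilde Z_n|\le \sqrt n\log^4 n}] = -\E[\tilde Z_n\ind_{|\tilde Z_n|>\sqrt n\log^4 n}]$, and Lemma \ref{cor-L1tail} (applicable since $\sqrt n\log^4 n$ is exactly the threshold there) bounds this in absolute value by $Cn/(\sqrt n\log^4 n)=C\sqrt n/\log^4 n$, whose square is $Cn/\log^8 n = o(n\log n)$.

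For the main equality, I would write, for fixed $M>0$ and all large $n$,
\[
\E\left[(\tilde Z_n)^2\ind_{|\tilde Z_n|\le \sqrt n\log^4 n}\right]
= \E\left[(\tilde Z_n)^2\ind_{|\tilde Z_n|\le M\sqrt{n\log n}}\right]
+ \E\left[(\tilde Z_n)^2\ind_{M\sqrt{n\log n}<|\tilde Z_n|\le \sqrt n\log^4 n}\right].
\]
Dividing by $n\log n$, the first term converges to $b^2\,E[\Phi^2\ind_{|\Phi|\le M}]$ as $n\to\infty$ for each fixed $M$, by the limiting distribution \eqref{gaus-static-scaling} (restated with centering by $\E[Z_n]$ via Corollary \ref{k2-meanvar}, cf.\ \eqref{Zn-limdist-st}) together with the bounded convergence theorem, since $x^2\ind_{|x|\le M}$ is bounded and a.e.\ continuous for the Gaussian law. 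Letting $M\to\infty$ gives $b^2 E[\Phi^2]=b^2$. It remains to show the middle-range contribution vanishes in the iterated limit $\lim_{M\to\infty}\limsup_{n\to\infty}(n\log n)^{-1}\E[(\tilde Z_n)^2\ind_{M\sqrt{n\log n}<|\tilde Z_n|\le \sqrt n\log^4 n}]=0$. Splitting this expectation into its right part ($\tilde Z_n>0$) and left part ($\tilde Z_n<0$): the right part is handled directly by Corollary \ref{cor:rtail} (with $p=2$), whose iterated limit is $0$. For the left part, on the range where $a=M\sqrt{n\log n}$ satisfies $M\sqrt{n\log n}\le a\le\sqrt n\log^4 n$, Lemma \ref{cor-L2trunc} gives a bound of order $n\log\log n + n^2\log^2 n/(M^2 n\log n) = n\log\log n + n\log n/M^2$; dividing by $n\log n$ yields $\log\log n/\log n + 1/M^2$, whose $\limsup_n$ is $1/M^2\to 0$ as $M\to\infty$. (One must also note that $E[\Phi^2\ind_{|\Phi|\ge M}]\to 0$ as $M\to\infty$, so combining all pieces gives the claim.)

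The main obstacle here is a bookkeeping one rather than a conceptual one: making sure the various tail lemmas are invoked on exactly the ranges of $x$ (or $a$) where they are valid, in particular that the truncation level $\sqrt n\log^4 n$ is compatible with the hypothesis $a\ge\sqrt n\log^4 n$ of Lemma \ref{cor-L1tail} and the hypothesis $\sqrt{n\log n}\le a\le\sqrt n\log^4 n$ of Lemma \ref{cor-L2trunc}, and that one may take $M$ large but fixed before sending $n\to\infty$. Once the ranges line up, the argument is a standard ``truncate at $M\sqrt{n\log n}$, use the CLT on the bulk, control the tail uniformly in $n$, then send $M\to\infty$'' scheme, identical in spirit to the proof of Theorem \ref{thm:EZn2lim} given just above.
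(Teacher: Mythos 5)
Your proposal is correct and follows essentially the same route as the paper's proof: reduce the variance to the truncated second moment via $\E[\tilde Z_n]=0$ and Lemma \ref{cor-L1tail}, then split at $M\sqrt{n\log n}$, use \eqref{gaus-static-scaling} with bounded convergence on the bulk and Lemma \ref{cor-L2trunc} on the middle range, and send $n\to\infty$ followed by $M\to\infty$. The only difference is that your separate appeal to Corollary \ref{cor:rtail} for the right part is superfluous (and would anyway need the centered version \eqref{rtail-cen}, since that corollary is stated for $Z_n-vn$), because Lemma \ref{cor-L2trunc} already controls both tails through $|\tilde Z_n|$.
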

\begin{proof}
First of all, note that 
\begin{align*}
&\left| \Var\left(\tilde{Z}_n \ind_{|\tilde{Z}_n| \leq \sqrt{n}\log^4 n} \right) 
- \E\left[ (\tilde{Z}_n)^2 \ind_{|\tilde{Z}_n| \leq \sqrt{n}\log^4 n} \right] \right| \\
&\qquad = \E\left[ \tilde{Z}_n \ind_{|\tilde{Z}_n| \leq \sqrt{n}\log^4 n} \right]^2 
= \E\left[ \tilde{Z}_n \ind_{|\tilde{Z}_n| > \sqrt{n}\log^4 n} \right]^2
\leq \frac{C n}{\log^8 n},
\end{align*}
where in the second equality we used that $\E[\tilde{Z}_n] = 0$ and in the last inequality we used Lemma \ref{cor-L1tail}. 
Since this bound is $o(n\log n)$, it is enough to only prove the second equality in \eqref{tv-tsm-small}. 
To this end, fix $M>1$ and note that 
\begin{align*}
 \left| \frac{\E\left[ (\tilde{Z}_n)^2 \ind_{|\tilde{Z}_n| \leq \sqrt{n}\log^4 n} \right]}{n\log n} - b^2 \right|
 &\leq \left| \frac{\E\left[ (\tilde{Z}_n)^2 \ind_{|\tilde{Z}_n| \leq M\sqrt{n\log n} } \right]}{n\log n} - b^2 E[\Phi^2 \ind_{|\Phi|\leq M} ] \right| \\
&\qquad  + \frac{\E\left[ (\tilde{Z}_n)^2 \ind_{M\sqrt{n\log n} < |\tilde{Z}_n| \leq \sqrt{n}\log^4 n} \right]}{n\log n} + b^2 E[\Phi^2 \ind_{|\Phi|>M} ]. 
\end{align*}
The first term on the right vanishes by the bounded convergence theorem (together with \eqref{gaus-static-scaling}  and Corollary \ref{k2-meanvar}), while the second term on the right can be bounded by $C\frac{\log\log n}{\log n} + \frac{C}{M^2}$ by Lemma \ref{cor-L2trunc}. Therefore, taking first $n\to\infty$ and then $M\to\infty$ completes the proof. 
\end{proof}

\section{Transient RWCRE when \texorpdfstring{$\kappa=2$}{kappa equals 2}: Arbitrary cooling}
\label{sec:k2limdist}

In this section we will give the proof of Theorem \ref{thm:RWCRElim-k2}. 
We will prove that the limiting distribution is Gaussian by proving that for any subsequence $n_j\to \infty$ there is a further subsequence $n_{j_k}$ along which $\frac{ X_{n_{j_k}} - \E[ X_{n_{j_k}} ] }{\beta_{n_{j_k}} s_{n_{j_k}} } \Rightarrow \Phi$. Therefore, at several times throughout the proof we will be able to assume that some nice additional property holds by passing to a subsequence along which it is true.

Before starting the proof of Theorem \ref{thm:RWCRElim-k2} we will introduce some notation and give a brief idea of the proof. 
Our starting point for the analysis of the distribution of $X_n$ is the decomposition into sums of increments of independent copies of RWRE as in \eqref{basicdec}. 
We will analyze this sum by separating the terms in the sum into the ``large" and the ``small" terms. The small terms will be those whose variance makes up a negligible fraction of the total variance of the sum, and the large terms will be the remaining terms. To be more precise, let
\[
s_n^2 = \Var(X_{n}) = \sum_{k=1}^{\ell_n+1} \sigma_{k,n}^2, 
\qquad \text{where}\quad \sigma_{k,n}^2 = \Var(Z_{T_{k,n}}), 
\]
and then for any $\delta>0$ and $n\geq 1$
let 
\begin{align*}
 I_{n,\delta}^+ &=  \left\{ k\in \{1,2,\ldots,\ell_n+1\}: \,  \sigma_{k,n}^2 > \delta s_n^2 \right\}, \\
 \text{and}\quad 
  I_{n,\delta}^- &=  \left\{k\in \{1,2,\ldots,\ell_n+1\}: \,  \sigma_{k,n}^2 \leq \delta s_n^2\right\}. 
\end{align*}
Finally, letting
\[
\mathcal{Z}_{n,\delta}^+ = \sum_{k \in I_{n,\delta}^+} (Z_{T_{k,n}}^{(k)} - \E[Z_{T_{k,n}}])
 \quad\text{and}\quad 
\mathcal{Z}_{n,\delta}^- = \sum_{k \in I_{n,\delta}^-} (Z_{T_{k,n}}^{(k)} - \E[Z_{T_{k,n}}]) 
\]
we can rewrite $X_n - \E[X_n] = \mathcal{Z}_{n,\delta}^+ + \mathcal{Z}_{n,\delta}^-$. 
The main idea is then to show that, after appropriate scaling, both $Z_{n,\delta}^+$ and $Z_{n,\delta}^-$ are approximately Gaussian (unless the set $I_{n,\delta}^+$ or $I_{n,\delta}^-$ is empty in which case one only needs that the non-zero term is approximately Gaussian). 
Unfortunately, this argument doesn't quite work for a fixed $\delta$ as our analysis of the small terms will actually require $\delta$ to vanish as $n\to\infty$. 
That is, we will show that there exists a sequence $\delta_n \to 0$ such that both $\mathcal{Z}_{n,\delta_n}^+$ and $\mathcal{Z}_{n,\delta_n}^-$ converge in distribution to Gaussians when properly scaled. Since $\mathcal{Z}_{n,\delta_n}^+$ and $\mathcal{Z}_{n,\delta_n}^-$ are independent this will then imply that $X_n - \E[X_n]$ converges to a Gaussian when properly scaled.
Finally, we note that as part of our proof we will show that the proper scaling ends up differing from $s_n = \sqrt{\Var(X_n)}$ by a multiplicative factor which asymptotically lies in $[\beta,1]$, where the constant $\beta$ is defined in \eqref{Zn-limdist-st}.


Having outlined the general strategy, we will now begin proving some lemmas which complete the main steps of the proof. 

\noindent\textbf{Gaussian convergence for the large parts.}
The first step is to prove the convergence of the sums over only the large cooling intervals. 
We cannot simply claim that $\mathcal{Z}_{n,\delta}^+ \Rightarrow \Phi$ as $n\to\infty$ since it could be that  $I_{n,\delta}^+ = \emptyset$ for infinitely many $n$. 
However, the following lemma show that $\mathcal{Z}_{n,\delta}^+$ is approximately Gaussian whenever the set $I_{n,\delta}^+$ is not empty. 

\begin{lemma}\label{lem-step1}
 For any $\delta>0$ and $n\geq 1$ let $s_{n,\delta,+}^2 := \Var(\mathcal{Z}_{n,\delta}^+)$. If $n_j\to\infty$ is a subsequence such that $s_{n_j,\delta,+}>0$ (or equivalently $I_{n_j,\delta}^+ \neq \emptyset$) for all large $j$ then 
 \begin{equation}\label{bigparts}
 \frac{\mathcal{Z}_{n_j,\delta}^+}{s_{n_j,\delta,+}} \underset{j\to\infty}{\Longrightarrow} \beta \Phi. 
 \end{equation}
\end{lemma}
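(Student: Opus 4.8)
The plan is to use the fact that $\mathcal{Z}_{n,\delta}^+$ is a sum of \emph{at most} $\lfloor 1/\delta\rfloor$ independent centered terms: each summand $\tilde Z^{(k)}_{T_{k,n}}$ with $k\in I_{n,\delta}^+$ has variance $\sigma_{k,n}^2>\delta s_n^2$ while the variances sum to $s_n^2$. So after passing to a further subsequence we may assume the cardinality $N:=|I_{n_j,\delta}^+|$ is a constant $N=N_\delta\in\{1,\dots,\lfloor 1/\delta\rfloor\}$, and, relabelling the independent RWRE copies, write $\mathcal{Z}_{n_j,\delta}^+\overset{d}{=}\sum_{i=1}^N \tilde Z^{(i)}_{t_i(j)}$, where $\tilde Z^{(1)},\dots,\tilde Z^{(N)}$ are i.i.d.\ centered copies of the RWRE and $t_i(j)$ ranges over the increments $\{T_{k,n_j}:k\in I_{n_j,\delta}^+\}$.

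The first key step is to show $s_n=\sqrt{\Var(X_n)}\to\infty$, and hence $t_i(j)\to\infty$ for each $i$. Since $\tau(\ell)<\infty$ for every $\ell$ we have $\ell_n\to\infty$ as $n\to\infty$, and together with $s_n^2\ge\sum_{k=1}^{\ell_n}\Var(Z_{T_k})\ge \ell_n\,\varepsilon_0$, where $\varepsilon_0:=\inf_{m\ge1}\Var(Z_m)>0$ (positive because $\Var(Z_m)>0$ for all $m\ge1$ and $\Var(Z_m)\to\infty$), this gives $s_n\to\infty$. For $k\in I_{n_j,\delta}^+$ we then have $\Var(Z_{T_{k,n_j}})=\sigma_{k,n_j}^2>\delta s_{n_j}^2\to\infty$; since $\Var(Z_m)$ is bounded on any bounded set of $m$, $\Var(Z_{m_j})\to\infty$ forces $m_j\to\infty$, so $t_i(j)\to\infty$ for every $i\in\{1,\dots,N\}$.

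Now apply the RWRE limit theorem. By \eqref{Zn-limdist-st} and $t_i(j)\to\infty$, $\tilde Z^{(i)}_{t_i(j)}/\sqrt{\Var(Z_{t_i(j)})}\Rightarrow\beta\Phi$ for each $i$; by independence of the copies, the joint convergence $\big(\tilde Z^{(i)}_{t_i(j)}/\sqrt{\Var(Z_{t_i(j)})}\big)_{i=1}^N\Rightarrow(\beta\Phi_i)_{i=1}^N$ holds with $\Phi_1,\dots,\Phi_N$ i.i.d.\ standard Gaussians. Writing $a_i(j):=\sqrt{\Var(Z_{t_i(j)})}/s_{n_j,\delta,+}$, so that $\sum_{i=1}^N a_i(j)^2=1$ (and in fact $a_i(j)^2>\delta$), pass to a further subsequence along which $a_i(j)^2\to c_i$ for each $i$, necessarily with $\sum_i c_i=1$. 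Then
\[
\frac{\mathcal{Z}_{n_j,\delta}^+}{s_{n_j,\delta,+}}=\sum_{i=1}^N a_i(j)\,\frac{\tilde Z^{(i)}_{t_i(j)}}{\sqrt{\Var(Z_{t_i(j)})}}\ \underset{j\to\infty}{\Longrightarrow}\ \sum_{i=1}^N\sqrt{c_i}\,\beta\Phi_i,
\]
which is Gaussian with variance $\beta^2\sum_i c_i=\beta^2$, hence equal in law to $\beta\Phi$. Since every subsequence of $(n_j)$ has a further subsequence along which $\mathcal{Z}_{n_j,\delta}^+/s_{n_j,\delta,+}\Rightarrow\beta\Phi$, the original sequence converges, which is \eqref{bigparts}.

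There is no serious analytic obstacle in this lemma — it is one of the lighter steps — but two points require care. First, the limit carries the factor $\beta<1$ rather than being a standard Gaussian: this is precisely because the $\kappa=2$ RWRE converges to $\beta\Phi$ when normalized by its own standard deviation, equivalently the convergence in \eqref{gaus-static-scaling} fails in $L^2$ (cf.\ Theorem \ref{thm:EZn2lim}). Second, the set $I_{n_j,\delta}^+$ and its size vary with $j$, so one must pass to subsequences to freeze $N$ and the limiting variance proportions $c_i$, and then invoke the subsubsequence principle to recover convergence along the original $(n_j)$; establishing $s_n\to\infty$ (so that the large increments genuinely diverge) is the one input that must be checked rather than quoted.
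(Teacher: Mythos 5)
Your proof is correct and follows essentially the same route as the paper: decompose $\mathcal{Z}_{n_j,\delta}^+/s_{n_j,\delta,+}$ into at most $\lfloor 1/\delta\rfloor$ independent normalized RWRE increments, freeze the cardinality of $I_{n_j,\delta}^+$ along a subsequence, apply \eqref{Zn-limdist-st} to each (necessarily diverging) increment, and use that the squared coefficients sum to one. The only differences are that you spell out details the paper leaves implicit — that $s_n\to\infty$ (hence $T_{k,n_j}\to\infty$ for $k\in I^+$, which the paper handles in a footnote via Corollary \ref{k2-meanvar}) and the further subsequence plus subsubsequence argument for the coefficients — so this is a faithful, slightly more detailed version of the same argument.
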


\begin{proof}
We begin by noting that if $s_{n_j,\delta,+}>0$ then we can write 
\[
 \frac{\mathcal{Z}^+_{n_j,\delta}}{s_{n_j,\delta,+}}
 = \sum_{k \in I_{n_j,\delta}^+} \frac{\sigma_{k,n_j}}{s_{n_j,\delta,+}} \frac{\tilde{Z}_{T_{k,n_j}}^{(k)}}{\sigma_{k,n_j}}.
\]
Since the set $I_{n,\delta}^+$ can contain at most $\frac{1}{\delta}$ elements,  
by passing to a subsequence if needed we can assume without loss of generality that there is an integer $1\leq m \leq \frac{1}{\delta}$ such that $|I_{n_j,\delta}^+| = m$ for all $j$; that is, the sum in the decomposition has exactly $m$ terms.
It then follows from the limiting distribution for the RWRE in \eqref{Zn-limdist-st}
that $\{\tilde{Z}^{(k)}_{T_{k,n_j}} / \sigma_{k,n_j} \}_{k \in I_{n_j,\delta}^+} \Rightarrow \{\beta \Phi_i \}_{1\leq i\leq m}$, where the $\Phi_i$ are i.i.d.\ standard Gaussian random variables.\footnote{Note here that we are using that $T_{k,n}$ is large if $k \in I_{n,\delta}^+$. Indeed, it follows from Corollary \ref{k2-meanvar} and the definition of $I_{n,\delta}^+$ that $k \in I_{n,\delta}^+$ implies that $T_{k,n}\log (T_{k,n}) \geq C \delta s_n^2$ for some $C>0$. 
}
The claimed convergence in \eqref{bigparts} then follows from this, together with the observation that $\sum_{k \in I_{n_j,\delta}^+} (\frac{\sigma_{k,n_j}}{s_{n_j,\delta,+}})^2 = 1$. 
\end{proof}

\begin{corollary}\label{cor-step2}
There exists a sequence $\delta_n \to 0$ such that either (1) $s_{n,\delta_n}^+ = 0$ for all $n$ large, or (2) for any subsequence $n_j\to\infty$ such that $s_{n_j,\delta_{n_j},+}>0$ for all but finitely many $j$ we have 
\[
\frac{\mathcal{Z}^+_{n_j,\delta_{n_j}}}{s_{n_j,\delta_{n_j},+}} \underset{j\to\infty}{\Longrightarrow} \beta \Phi.
\]
\end{corollary}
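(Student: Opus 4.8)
The plan is to bootstrap the fixed-$\delta$ conclusion of Lemma~\ref{lem-step1} up to a vanishing sequence $\delta_n$ via a soft diagonalization. First I would fix once and for all a metric $d$ for convergence in distribution on $\bb{R}$ (say the bounded-Lipschitz metric), and for each $\delta>0$ and $n\geq 1$ introduce the ``defect''
\[
 g_\delta(n) :=
 \begin{cases}
  d\!\left( \mathcal{L}\!\left( \mathcal{Z}_{n,\delta}^+/s_{n,\delta,+} \right),\, \mathcal{L}(\beta\Phi) \right) & \text{if } s_{n,\delta,+}>0, \\
  0 & \text{if } s_{n,\delta,+}=0,
 \end{cases}
\]
so that, along any subsequence on which the normalization makes sense, $\mathcal{Z}_{n,\delta}^+/s_{n,\delta,+}\Rightarrow\beta\Phi$ is the same as $g_\delta\to0$.

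The first step is to observe that $\lim_{n\to\infty} g_\delta(n) = 0$ for every fixed $\delta>0$: if $s_{n,\delta,+}=0$ for all large $n$ this is trivial, and otherwise $A_\delta:=\{n:s_{n,\delta,+}>0\}$ is infinite, so Lemma~\ref{lem-step1} applied to the subsequence $A_\delta$ gives $g_\delta(n)\to0$ along $A_\delta$; since $g_\delta\equiv 0$ off $A_\delta$, we get $g_\delta(n)\to0$. The second step is the diagonal extraction: for each $m\in\bb{N}$ pick, using $\lim_n g_{1/m}(n)=0$, an index $N_m$ with $g_{1/m}(n)<1/m$ for all $n\geq N_m$, arrange that $1\leq N_1<N_2<\cdots$, and set $\delta_n:=1$ for $n<N_1$ and $\delta_n:=1/m$ for $N_m\leq n<N_{m+1}$. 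Then $\delta_n\to0$ (because $N_m\to\infty$) and $g_{\delta_n}(n)\to0$ (because $g_{\delta_n}(n)=g_{1/m}(n)<1/m$ on $[N_m,N_{m+1})$). Consequently, either $s_{n,\delta_n}^+=0$ for all large $n$ — alternative (1) — or else, for any subsequence $n_j\to\infty$ with $s_{n_j,\delta_{n_j},+}>0$ eventually, the relation $g_{\delta_{n_j}}(n_j)\to0$ is exactly the asserted convergence $\mathcal{Z}_{n_j,\delta_{n_j}}^+/s_{n_j,\delta_{n_j},+}\Rightarrow\beta\Phi$ of alternative (2).

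I do not expect a genuine obstacle here: all the probabilistic content is already in Lemma~\ref{lem-step1}, and what remains is bookkeeping. The two points that need a little care are (i) forcing the thresholds $N_m$ to be strictly increasing, so that the resulting $\delta_n$ actually tends to $0$ rather than merely being small for each individual $n$, and (ii) fixing the convention $g_\delta(n)=0$ when $s_{n,\delta,+}=0$, so that the defect is always well defined and the two alternatives (1)/(2) are treated uniformly. If one wishes to avoid naming a metric, the identical argument runs by selecting, for each $m$, a threshold past which $\mathcal{Z}_{n,1/m}^+/s_{n,1/m,+}$ is within L\'evy--Prokhorov distance $1/m$ of $\beta\Phi$ (or $s_{n,1/m,+}=0$).
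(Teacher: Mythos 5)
Your proposal is correct and follows essentially the same route as the paper's proof: the paper also introduces a metric for weak convergence, sets the defect to zero when $s_{n,\delta,+}=0$, invokes Lemma \ref{lem-step1} to get the defect vanishing for each fixed $\delta$, and then builds $\delta_n$ by the same staircase/diagonalization over thresholds $m_j$ with defect below $1/j$. No differences worth noting.
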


\begin{proof}
 Let $d$ be a metric on the space of Borel probability measures on $\mathbb{R}$ that is consistent with the topology of weak convergence (e.g., the Levy metric), and in a slight abuse of notation for random variables $X$ and $Y$ with distributions $\mu_X$ and $\mu_Y$, respectively, we will use $d(X,Y)$ in place of $d(\mu_X,\mu_Y)$. With this notation, for any $n\geq 1$ and $\delta>0$ we can let 
\[
 \e_{n,\delta} = 
 \begin{cases}
  d\left( \frac{\mathcal{Z}_{n,\delta}^+}{s_{n,\delta,+}},  \beta \Phi \right) & \text{if } s_{n,\delta,+}>0 \\
  0 & \text{if } s_{n,\delta,+} = 0. 
 \end{cases}
\]
It follows from Lemma \ref{lem-step1} that $\lim_{n\to\infty} \e_{n,\delta} = 0$, for any fixed $\delta>0$. 
Let $m_0 = 0$ and define integers $m_j$ for $j\geq 1$ inductively as follows. 
Let $m_j > m_{j-1}$ be such that $\e_{n,1/j} < 1/j$ for all $n>m_j$. Let $\delta_n = 1$ for $n< m_1$ and if $m_j \leq n < m_{j+1}$ for some $j\geq 1$ then $\delta_n = 1/j$. 
For this sequence $\delta_n$ we have $\lim_{n\to\infty} \e_{n,\delta_n} = 0$, and this completes the proof of the corollary.  
\end{proof}

Let the sequence $\delta_n \to 0$ be chosen as in Corollary \ref{cor-step2}.
In general we would like to prove that $\mathcal{Z}_{n,\delta_n}^-$ properly scaled also converges to a centered Gaussian, but in fact this isn't necessarily true if 
$\Var(\mathcal{Z}_{n,\delta_n}^-)$
doesn't grow to $\infty$. However, the following corollary shows that in this case we already have the limiting distribution we are after because only the large parts will be relevant in the limit. 

\begin{corollary}\label{cor-step3}
For any $\delta>0$ let $s_{n,\delta,-} := \Var(\mathcal{Z}_{n,\delta}^-)$.  
If $\delta_n$ is the sequence from Corollary \ref{cor-step2} and $n_j\to \infty$ is a subsequence such that
$\lim_{j\to\infty} \frac{s_{n_j,\delta_{n_j},-}}{s_{n_j}} = 0$, 
then 
\[
\frac{ X_{n_j} - \E[X_{n_j}] }{ \beta s_{n_j} } \underset{j\to\infty}{\Longrightarrow} \Phi. 
\]
\end{corollary}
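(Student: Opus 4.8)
The plan is to show that, along the given subsequence, the ``large part'' $\mathcal{Z}^+_{n_j,\delta_{n_j}}$ accounts for essentially all of the fluctuations of $X_{n_j}-\E[X_{n_j}]$, and then to quote the Gaussian convergence already obtained in Corollary~\ref{cor-step2}. First I would record the variance identity: since $X_n-\E[X_n]=\mathcal{Z}^+_{n,\delta}+\mathcal{Z}^-_{n,\delta}$ with the two summands independent, $s_n^2=\Var(\mathcal{Z}^+_{n,\delta})+\Var(\mathcal{Z}^-_{n,\delta})$. Using that $s_n^2=\Var(X_n)$ is bounded below by a positive constant (in $s_n^2=\sum_k\Var(Z_{T_{k,n}})$ at least one cooling interval is nonempty, and each nonempty one contributes a positive, bounded-below amount of variance), the hypothesis $s_{n_j,\delta_{n_j},-}/s_{n_j}\to 0$ gives $\Var(\mathcal{Z}^-_{n_j,\delta_{n_j}})/s_{n_j}^2\to 0$, hence
\[
\frac{s_{n_j,\delta_{n_j},+}^2}{s_{n_j}^2}=1-\frac{\Var(\mathcal{Z}^-_{n_j,\delta_{n_j}})}{s_{n_j}^2}\longrightarrow 1 \qquad (j\to\infty).
\]
In particular $s_{n_j,\delta_{n_j},+}>0$ for all large $j$, so alternative (1) of Corollary~\ref{cor-step2} cannot hold along this subsequence and alternative (2) applies, giving $\mathcal{Z}^+_{n_j,\delta_{n_j}}/s_{n_j,\delta_{n_j},+}\Longrightarrow\beta\Phi$, equivalently $\mathcal{Z}^+_{n_j,\delta_{n_j}}/(\beta\, s_{n_j,\delta_{n_j},+})\Longrightarrow\Phi$.

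Next I would decompose
\[
\frac{X_{n_j}-\E[X_{n_j}]}{\beta\, s_{n_j}}
=\frac{s_{n_j,\delta_{n_j},+}}{s_{n_j}}\cdot\frac{\mathcal{Z}^+_{n_j,\delta_{n_j}}}{\beta\, s_{n_j,\delta_{n_j},+}}
+\frac{\mathcal{Z}^-_{n_j,\delta_{n_j}}}{\beta\, s_{n_j}}.
\]
In the first term the deterministic prefactor tends to $1$ while the random factor converges in distribution to $\Phi$, so by Slutsky's theorem the product converges in distribution to $\Phi$. For the second term, $\mathcal{Z}^-_{n,\delta}$ is a sum of independent centered RWRE increments, so it has mean zero and variance $\Var(\mathcal{Z}^-_{n,\delta})$, and Chebyshev's inequality gives, for every $\veps>0$,
\[
\P\!\left(\frac{|\mathcal{Z}^-_{n_j,\delta_{n_j}}|}{\beta\, s_{n_j}}>\veps\right)\le\frac{\Var(\mathcal{Z}^-_{n_j,\delta_{n_j}})}{\beta^2\veps^2\, s_{n_j}^2}\longrightarrow 0\qquad(j\to\infty),
\]
so the second term tends to $0$ in probability. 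A final application of Slutsky's theorem combines the two contributions and yields $\frac{X_{n_j}-\E[X_{n_j}]}{\beta\, s_{n_j}}\Longrightarrow\Phi$, as claimed.

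I do not anticipate any real obstacle: once Corollary~\ref{cor-step2} is available this is a short bookkeeping argument. The only points that merit a word of care are that $s_n^2=\Var(X_n)$ is bounded away from $0$ (needed to pass from the hypothesis to $\Var(\mathcal{Z}^-_{n_j,\delta_{n_j}})/s_{n_j}^2\to 0$, and hence to $s_{n_j,\delta_{n_j},+}/s_{n_j}\to 1$), and that alternative (1) of Corollary~\ref{cor-step2} is genuinely excluded along our subsequence, which is immediate from $s_{n_j,\delta_{n_j},+}>0$ for all large $j$.
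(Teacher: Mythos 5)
Your proposal is correct and follows essentially the same route as the paper: use $s_n^2=s_{n,\delta_n,+}^2+s_{n,\delta_n,-}^2$ to deduce $s_{n_j,\delta_{n_j},+}/s_{n_j}\to 1$, conclude that the small part is negligible (its variance over $s_{n_j}^2$ vanishes, so it tends to $0$ in probability) and that the large part, rescaled, converges to $\beta\Phi$ by Corollary \ref{cor-step2}, then combine via Slutsky. The extra remarks (excluding alternative (1) of Corollary \ref{cor-step2}, positivity of $s_n$) are fine but not substantively different from the paper's argument.
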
 

\begin{proof}
Since $s_n^2 = s_{n,\delta_n,-}^2 + s_{n,\delta_n,+}^2$, the assumption that $s_{n_j,\delta_{n_j},-}/s_{n_j} \to 0$ implies that 
\begin{equation}\label{bigall}
\lim_{j\to \infty} \frac{s_{n_j,\delta_{n_j},+}}{s_{n_j}} = 1. 
\end{equation}
Next we write the decomposition
\[
\frac{ X_{n_j} - \E[X_{n_j}] }{ \beta s_{n_j} }
= \frac{\mathcal{Z}_{n_j,\delta_{n_j}}^-}{\beta s_{n_j}} + \frac{\mathcal{Z}_{n_j,\delta_{n_j}}^+}{\beta s_{n_j}}.
\]
The variance of the first term on the right vanishes as $j\to \infty$ by the assumption on the subsequence $n_j$ in the statement of the corollary, and the second term converges in distribution to a standard Gaussian by \eqref{bigall} and Corollary \ref{cor-step2}. 
\end{proof}

\noindent\textbf{Gaussian convergence for the small parts.}
Due to Corollary \ref{cor-step3}, 
we will only need to consider the limiting distributions $X_n$ along subsequences $n_j$ such that 
\begin{equation}\label{smrat}
\liminf_{j\to\infty} \frac{s_{n_j,\delta_{n_j},-}}{s_{n_j}} = \theta \in (0,1]. 
\end{equation}
The following lemma shows that under this assumption $\mathcal{Z}_{n_j,\delta_{n_j}}^-$ converges to a Gaussian, but determining the proper scaling to get a standard Gaussian limit is quite delicate.

\begin{lemma}\label{lem-smallparts}
Let $\delta_n \to 0$ and assume that $n_j$ is a subsequence such that \eqref{smrat} holds. Then, 
\[
 \frac{\mathcal{Z}_{n_j,\delta_{n_j}}^-}{\tilde{s}_{n_j,\delta_{n_j}}}
 \Rightarrow \Phi,  
\]
where for any $n\geq 1$ and $\delta>0$
\begin{equation}\label{tsndn}
 \tilde{s}_{n,\delta}^2 = \Var\left( \sum_{k \in I_{n,\delta}^-} \tilde{Z}_{T_{k,n}}^{(k)} \ind_{A_{k,n}} \right) 
 \text{ and } 
 A_{k,n} = \left\{ |\tilde{Z}_{T_{k,n}}^{(k)}|\leq \frac{s_n}{\sqrt{\log s_n}} \vee \sqrt{T_{k,n}}\log^4 T_{k,n} \right\}.
\end{equation}
\end{lemma}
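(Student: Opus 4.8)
The goal is a Lindeberg-Feller CLT for the triangular array $\{\tilde{Z}_{T_{k,n_j}}^{(k)}\ind_{A_{k,n_j}}: k\in I_{n_j,\delta_{n_j}}^-\}$ after recentering and rescaling by $\tilde{s}_{n_j,\delta_{n_j}}$. The array is already independent across $k$ for fixed $j$, so the main work is (i) showing the truncation $\ind_{A_{k,n}}$ does not change the sum too much — i.e. that $\mathcal{Z}_{n,\delta}^-$ and $\sum_{k\in I_{n,\delta}^-}\tilde{Z}^{(k)}_{T_{k,n}}\ind_{A_{k,n}}$ differ by a term that is negligible relative to $\tilde{s}_{n,\delta}$ — and (ii) verifying the Lindeberg condition for the truncated array. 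First I would record the elementary consequences of the definitions: if $k\in I_{n,\delta}^-$ then $\sigma_{k,n}^2\le \delta s_n^2$, so by Corollary~\ref{k2-meanvar} each truncation level in $A_{k,n}$ is at least of order $\sqrt{T_{k,n}\log T_{k,n}}$, and the ``$s_n/\sqrt{\log s_n}$'' alternative is comparable to $s_n$ up to a logarithmic factor; these are exactly the regimes in which Lemmas~\ref{cor-L1tail}, \ref{cor-L2trunc} and \ref{cor-smalltsm} and Corollary~\ref{cor:centered} give usable bounds on truncated first and second moments of the RWRE.

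\textbf{Step 1: the truncation is harmless.} I would bound $\E\big[\big(\mathcal{Z}_{n,\delta}^- - \sum_{k\in I_{n,\delta}^-}\tilde{Z}^{(k)}_{T_{k,n}}\ind_{A_{k,n}}\big)^2\big]$ using independence and zero mean: it is at most $\sum_{k\in I_{n,\delta}^-}\E\big[(\tilde{Z}_{T_{k,n}})^2\ind_{A_{k,n}^c}\big]$ plus a cross term controlled by $\big(\sum_k \E[|\tilde{Z}_{T_{k,n}}|\ind_{A_{k,n}^c}]\big)^2$. For cooling intervals with $T_{k,n}$ large (so that $s_n/\sqrt{\log s_n}$ is the smaller cutoff) one uses Corollary~\ref{cor:rtail} and Lemma~\ref{Z2left}-type tail control to show $\E[(\tilde{Z}_{T_{k,n}})^2\ind_{|\tilde{Z}|>s_n/\sqrt{\log s_n}}]$ is of order $T_{k,n}\log T_{k,n}\cdot(\text{something}\to0)$; for intervals where $\sqrt{T_{k,n}}\log^4 T_{k,n}$ is the cutoff one uses Lemma~\ref{cor-L2trunc} and Lemma~\ref{cor-L1tail} directly. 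Summing over $k$ and comparing with $\tilde{s}_{n,\delta}^2$ (which under \eqref{smrat} is $\gtrsim \theta^2 s_n^2$ along the subsequence, up to the next step) shows the difference is $o(\tilde{s}_{n,\delta}^2)$, so it suffices to prove the CLT for the truncated sum itself, and after centering each truncated variable. Here one must also check that the recentering $\sum_k\E[\tilde{Z}_{T_{k,n}}\ind_{A_{k,n}}] = -\sum_k\E[\tilde{Z}_{T_{k,n}}\ind_{A_{k,n}^c}]$ is $o(\tilde{s}_{n,\delta})$, which again follows from Lemma~\ref{cor-L1tail} summed over $k\in I_{n,\delta}^-$.

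\textbf{Step 2: Lindeberg.} Write the normalized truncated sum as $\sum_{k\in I_{n,\delta}^-} \xi_{k,n}$ with $\xi_{k,n} = (\tilde{Z}^{(k)}_{T_{k,n}}\ind_{A_{k,n}} - \E[\tilde{Z}_{T_{k,n}}\ind_{A_{k,n}}])/\tilde{s}_{n,\delta}$, so $\sum_k\Var(\xi_{k,n})=1$ by construction. For Lindeberg I need $\sum_k\E[\xi_{k,n}^2\ind_{|\xi_{k,n}|>\veps}]\to0$. Since each $|\tilde{Z}^{(k)}_{T_{k,n}}\ind_{A_{k,n}}|$ is bounded by $\max(s_n/\sqrt{\log s_n},\sqrt{T_{k,n}}\log^4 T_{k,n})$, and $\tilde{s}_{n,\delta}\gtrsim\theta s_n$, I get a uniform bound $|\xi_{k,n}|\le C/\sqrt{\log s_n}$ for the ``large-$T$'' truncation contribution — which goes to zero uniformly, killing Lindeberg for those terms immediately — while for the ``small-$T$'' (i.e. $k\in I_{n,\delta}^-$ with $\sqrt{T_{k,n}}\log^4 T_{k,n}$ the active cutoff) terms I need that the individual variances $\sigma_{k,n}^2/\tilde{s}_{n,\delta}^2$ are uniformly small; this is where $\delta_n\to0$ is used, since $k\in I_{n,\delta_n}^-$ forces $\sigma_{k,n}^2\le\delta_n s_n^2 = o(s_n^2) = o(\tilde{s}_{n,\delta}^2)$ along the subsequence, giving asymptotic negligibility of each term, and then the Lindeberg sum is bounded by $\max_k\Var(\xi_{k,n})\cdot$(bounded) plus a tail term handled as in Step~1. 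This is precisely the place where a fixed $\delta$ would not suffice, as flagged in the text.

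\textbf{Main obstacle.} I expect the genuinely delicate point to be Step~1 combined with pinning down $\tilde{s}_{n,\delta}^2$: one must show that replacing $\mathcal{Z}_{n,\delta_n}^-$ by its truncated version neither loses nor spuriously adds a non-negligible amount of variance, and simultaneously that $\tilde{s}_{n,\delta_n}^2$ is comparable to $s_{n,\delta_n,-}^2$ (so that $\tilde{s}_{n_j,\delta_{n_j}}\gtrsim \theta s_{n_j}$ is available for the denominators above) — the awkwardness being that the cutoff in $A_{k,n}$ is the maximum of a \emph{global} scale $s_n/\sqrt{\log s_n}$ and a \emph{local} scale $\sqrt{T_{k,n}}\log^4 T_{k,n}$, so the bookkeeping splits $I_{n,\delta}^-$ into those two sub-regimes and the moment estimates (Corollary~\ref{cor:rtail}, Lemmas~\ref{cor-L1tail}, \ref{cor-L2trunc}, \ref{cor-smalltsm}) must be applied with care to each, keeping the error terms summable against $\sum_k T_{k,n}\log T_{k,n}\asymp s_n^2$.
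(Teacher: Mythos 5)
The decisive gap is in your Step 1. You propose to show the truncation is harmless by an $L^2$ bound, i.e.\ that $\sum_{k\in I_{n,\delta_n}^-}\E\big[(\tilde{Z}_{T_{k,n}})^2\ind_{A_{k,n}^c}\big]=o(\tilde{s}_{n,\delta_n}^2)$, asserting that each such term is $T_{k,n}\log T_{k,n}\cdot o(1)$. That is false in general: for $k\in I_{n,\delta_n}^-$ with $T_{k,n}\log T_{k,n}$ of order $\delta_n s_n^2$ (allowed, since $\delta_n$ may decay very slowly), the active cutoff in $A_{k,n}$ is $\sqrt{T_{k,n}}\log^4 T_{k,n}$, and the precise left-tail asymptotics \eqref{cprecise ltail} give $\E\big[(\tilde{Z}_{T_{k,n}})^2\ind_{A_{k,n}^c}\big]\asymp K_0v\,T_{k,n}\log T_{k,n}$, i.e.\ a \emph{fixed} fraction $1-\beta^2$ of $\sigma_{k,n}^2$. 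If such blocks carry a positive fraction of $s_{n,\delta_n,-}^2$, your $L^2$ error is comparable to $\tilde{s}_{n,\delta_n}^2$, so this route cannot close. This is not a bookkeeping issue but the heart of the lemma: the truncation \emph{does} remove a non-negligible share of variance (carried by rare moderate-deviation events), which is exactly why the correct normalizer is $\tilde{s}_{n,\delta_n}$ rather than $s_{n,\delta_n,-}$ and why $\beta_n<1$ appears in Theorem \ref{thm:RWCRElim-k2}; one only has $\liminf \tilde{s}_{n,\delta_n}/s_{n,\delta_n,-}\geq\beta$ (the paper's \eqref{tsndnrat}, proved via Lemma \ref{cor-smalltsm}), not asymptotic equality, so your stated goal that the truncation ``neither loses nor spuriously adds a non-negligible amount of variance'' is the wrong target. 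The paper instead controls the complement sum in $L^1$: since $A_{k,n}^c$ forces $T_{k,n}\geq\sqrt{s_n}$, Lemma \ref{cor-L1tail} gives $\E\big[|\tilde{Z}_{T_{k,n}}|\ind_{A_{k,n}^c}\big]\leq C T_{k,n}\sqrt{\log s_n}/s_n$, and summing (using $T\log T\leq C\sigma_{k,n}^2$ and $\log T_{k,n}\geq\tfrac12\log s_n$) yields $Cs_n/\sqrt{\log s_n}=o(\tilde{s}_{n,\delta_n})$ once \eqref{tsndnrat} and \eqref{smrat} are established.

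Your Step 2 is also too loose at the same place. For blocks where the local cutoff $\sqrt{T_{k,n}}\log^4 T_{k,n}$ exceeds $s_n/\sqrt{\log s_n}$, the variables $\xi_{k,n}$ are \emph{not} uniformly $O(1/\sqrt{\log s_n})$ (the cutoff can be as large as roughly $\sqrt{\delta_n}\,s_n\log^{7/2}s_n$), and ``individual variances uniformly small'' does not imply the Lindeberg condition. What is actually needed is the quantitative estimate of Lemma \ref{cor-L2trunc} applied at level $a=\e s_n$, which is legitimate precisely because $\delta_n\to0$ forces $T_{k,n}\log T_{k,n}\leq C\delta_n s_n^2\leq\e^2 s_n^2$; this gives the per-term bound $C\sigma_{k,n}^2\big(\tfrac{\log\log s_n}{\log s_n}+\tfrac{\delta_n}{\e}\big)$ as in \eqref{Lind2a}, complemented by $\E\big[\tilde{Z}_{T_{k,n}}\ind_{A_{k,n}}\big]^2\leq C\sigma_{k,n}^2/\log^9 s_n$ as in \eqref{Lind2b}, and summing against $\sum_k\sigma_{k,n}^2=s_n^2$ verifies Lindeberg. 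You correctly identify $\delta_n\to0$ as essential and cite the right lemmas, but the argument ``max variance small plus a tail term handled as in Step 1'' does not substitute for these bounds — particularly since Step 1 is itself the broken piece.
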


\begin{proof}
For simplicity of notation, we will give the proof under the assumption that \eqref{smrat} holds without taking a subsequence. That is, 
\begin{equation}\label{smrat-n}
\liminf_{n\to\infty} \frac{s_{n,\delta_n,-}}{s_n} = \theta \in (0,1]. 
\end{equation}
We begin by decomposing 
\begin{align}
\frac{\mathcal{Z}_{n,\delta_n}^-}{\tilde{s}_{n,\delta_n}}
&= \frac{1}{\tilde{s}_{n,\delta_n}} \sum_{k \in I_{n,\delta_n}^-} \left( \tilde{Z}_{T_{k,n}}^{(k)} \ind_{A_{k,n}} - \E\left[ \tilde{Z}_{T_{k,n}}^{(k)} \ind_{A_{k,n}} \right] \right) \label{Gpart} \\
&\qquad + \frac{1}{\tilde{s}_{n,\delta_n}} \sum_{k \in I_{n,\delta_n}^-} \left( \tilde{Z}_{T_{k,n}}^{(k)} \ind_{A_{k,n}^c} - \E\left[ \tilde{Z}_{T_{k,n}}^{(k)} \ind_{A_{k,n}^c} \right] \right). \label{negpart}
\end{align}
We will prove that \eqref{Gpart} converges in distribution to a standard Gaussian while \eqref{negpart} converges in distribution to zero.

\noindent\textbf{Proof that \eqref{negpart} is negligible.} 
We will show that \eqref{negpart} converges to zero in $L^1$. 
To this end, note first of all that 
\[
\E\left[ \left| \sum_{k \in I_{n,\delta_n}^-} \left( \tilde{Z}_{T_{k,n}}^{(k)} \ind_{A_{k,n}^c} - \E\left[ \tilde{Z}_{T_{k,n}}^{(k)} \ind_{A_{k,n}^c} \right] \right) \right| \right] 
\leq 2 \sum_{k \in I_{n,\delta_n}^-} \E\left[ |\tilde{Z}_{T_{k,n}}| \ind_{A_{k,n}^c} \right]. 
\]
Since $|\tilde{Z}^{(k)}_{T_{k,n}}| \leq 2T_{k,n}$, the event $A_{k,n}^c$ is empty if $2T_{k,n} < \frac{s_n}{\sqrt{\log s_n}}$. Therefore, we need only consider the terms in the sum where $T_{k,n} \geq \frac{s_n}{2\sqrt{\log s_n}} \geq \sqrt{s_n}$. For these terms we can use Lemma \ref{cor-L1tail} to bound the sum by 
\begin{align*}
\sum_{ k \in I_{n,\delta_n}^- } \E\left[ |\tilde{Z}_{T_{k,n}}| \ind_{A_{k,n}^c} \right] 
&\leq C \sum_{ \substack{k \in I_{n,\delta_n}^- \\ T_{k,n} \geq \sqrt{s_n}} } \frac{T_{k,n}}{ \left( \frac{s_n}{\sqrt{\log s_n}} \right) } \\
&\leq C \frac{\sqrt{\log s_n}}{s_n} \sum_{ \substack{k \in I_{n,\delta_n}^- \\ T_{k,n} \geq \sqrt{s_n}} } \frac{\sigma_{k,n}^2}{\log T_{k,n}} 
\leq \frac{C}{s_n\sqrt{\log s_n}} \sum_{k=1}^{\ell_n+1} \sigma_{k,n}^2
= C \frac{s_n}{\sqrt{\log s_n}},  
\end{align*}
where in the second inequality we used that Corollary \ref{k2-meanvar} implies that there is a constant $C$ such that $T_{k,n} \log T_{k,n} \leq C \sigma_{k,n}^2$, and in the third inequality we used that $T_{k,n} \geq \sqrt{s_n}$ implies that $\log T_{k,n} \geq (1/2) \log s_n$. 
Thus, to complete the proof that \eqref{negpart} converges to zero in $L^1$, we need only to show that $\lim_{n\to\infty} \frac{s_n}{\tilde{s}_{n,\delta_n} \sqrt{\log s_n}} = 0$. 
In fact, we will show that 
\begin{equation}\label{tsndnrat}
 \liminf_{n\to\infty} \frac{\tilde{s}_{n,\delta_n}}{s_{n,\delta_n,-} } \geq \beta, 
\end{equation}
which combined with our assumption \eqref{smrat-n} is enough to show that 
$\lim_{n\to\infty} \frac{s_n}{\tilde{s}_{n,\delta_n} \sqrt{\log s_n}} = 0$.
To prove \eqref{tsndnrat}, note first of all that 
$\tilde{s}_{n,\delta_n}^2 = \sum_{k \in I_{n,\delta_n}^-} \Var\left( \tilde{Z}_{T_{k,n}}^{(k)} \ind_{A_{k,n}} \right)$
and that 
$s_{n,\delta_n,-}^2 = \sum_{k \in I_{n,\delta_n}^-} \sigma_{k,n}^2$
so that it is enough to show that 
\begin{equation}\label{tVarlb1}
\liminf_{n\to \infty} \inf_{k \in I_{n,\delta_n}^-} \frac{ \Var\left( \tilde{Z}_{T_{k,n}}^{(k)} \ind_{A_{k,n}} \right) }{\sigma_{k,n}^2} \geq \beta^2. 
\end{equation}
To this end, first note that 
$\Var\left( \tilde{Z}_{T_{k,n}}^{(k)} \ind_{A_{k,n}} \right) = \sigma_{k,n}^2$ 
if $T_{k,n} \leq \sqrt{s_n}$ since as noted above 
$\ind_{A_{k,n}} \equiv 1$ 
in this case.
Thus, we need only to get a good bound on $\Var\left( \tilde{Z}_{T_{k,n}}^{(k)} \ind_{A_{k,n}} \right)$ when $T_{k,n} \geq \sqrt{s_n}$. 
For this, note that 
\begin{align}
\Var\left( \tilde{Z}_{T_{k,n}}^{(k)} \ind_{A_{k,n}} \right) 
&= \E\left[ (\tilde{Z}_{T_{k,n}}^{(k)})^2 \ind_{A_{k,n}} \right] - \E\left[ \tilde{Z}_{T_{k,n}}^{(k)} \ind_{A_{k,n}} \right]^2 \nonumber \\
&= \E\left[ (\tilde{Z}_{T_{k,n}}^{(k)})^2 \ind_{A_{k,n}} \right] - \E\left[ \tilde{Z}_{T_{k,n}}^{(k)} \ind_{A_{k,n}^c} \right]^2 \nonumber \\
&\geq \E\left[ (\tilde{Z}_{T_{k,n}})^2 \ind_{|\tilde{Z}_{T_{k,n}}| \leq \sqrt{T_{k,n}} \log^4 T_{k,n}} \right] - \E\left[ |\tilde{Z}_{T_{k,n}}| \ind_{|\tilde{Z}_{T_{k,n}}| > \sqrt{T_k} \log^4 T_{k,n}} \right]^2 \nonumber \\
&\geq \E\left[ (\tilde{Z}_{T_{k,n}})^2 \ind_{|\tilde{Z}_{T_{k,n}}| \leq \sqrt{T_{k,n}} \log^4 T_{k,n}} \right] - C \frac{T_{k,n}}{\log^8 T_{k,n}}, \label{tVarlb2}
\end{align}
where the second equality follows from the fact that $\E\left[ \tilde{Z}_{T_k}^{(k)} \ind_{A_{k,n}} \right] = - \E\left[ \tilde{Z}_{T_k}^{(k)} \ind_{A_{k,n}^c} \right]$ since $\E[ \tilde{Z}_{T_k} ] = 0$, and the last inequality follows from Lemma \ref{cor-L1tail}. 
Recall that we only need to use the lower bound \eqref{tVarlb2} when $T_k \geq \sqrt{s_n}$ and note that \eqref{eq:2_mean_var_scaling} and Lemma \ref{cor-smalltsm}  imply that
\[
\lim_{n\to\infty} \frac{\E\left[ (\tilde{Z}_n)^2 \ind_{|\tilde{Z}_n| \leq \sqrt{n}\log^4 n} \right]}{\Var(Z_n)} 
= \beta^2.
\]
This completes the proof of \eqref{tVarlb1} and thus also the proof that \eqref{negpart} converges to zero in $L^1$. 

\noindent\textbf{Proof of convergence of \eqref{Gpart}.}
To prove the convergence of \eqref{Gpart} to a standard Gaussian we will use the Lindeberg-Feller CLT.
Since the normalization $\tilde{s}_{n,\delta_n}$ is chosen so that $\eqref{Gpart}$ has variance $1$ and since assumption \eqref{smrat-n} together with \eqref{tsndnrat} implies that $\tilde{s}_{n,\delta_n} \to \infty$, we need
only to check the Lindeberg condition; that is, for any $\e>0$
{\small
\[
 \lim_{n\to\infty} \frac{1}{\tilde{s}_{n,\delta_n}^2} 
 \sum_{k \in I_{n,\delta_n}^-} \E\left[ \left( \tilde{Z}_{T_{k,n}}^{(k)} \ind_{A_{k,n}} - \E\left[ \tilde{Z}_{T_{k,n}}^{(k)} \ind_{A_{k,n}} \right] \right)^2 \ind_{\left| \tilde{Z}_{T_{k,n}}^{(k)} \ind_{A_{k,n}} - \E\left[ \tilde{Z}_{T_{k,n}}^{(k)} \ind_{A_{k,n}} \right] \right| > \e\tilde{s}_{n,\delta_n} } \right] = 0. 
\]
}%
However, by \eqref{smrat-n} and \eqref{tsndnrat} it is enough to prove the above statement with $\tilde{s}_{n,\delta_n}$ replaced by $s_n$. That is, we need to show for any $\e>0$ that
\begin{equation}\label{Lind}
 \lim_{n\to\infty} \frac{1}{s_n^2} 
 \sum_{k \in I_{n,\delta_n}^-} \E\left[ \left( \tilde{Z}_{T_{k,n}}^{(k)} \ind_{A_{k,n}} - \E\left[ \tilde{Z}_{T_{k,n}}^{(k)} \ind_{A_{k,n}} \right] \right)^2 \ind_{\left| \tilde{Z}_{T_{k,n}}^{(k)} \ind_{A_{k,n}} - \E\left[ \tilde{Z}_{T_{k,n}}^{(k)} \ind_{A_{k,n}} \right] \right| > \e s_n } \right] = 0. 
\end{equation}
To obtain a simple bound on the expectation above, note that 
\[
E\left[ (Y-\mu)^2 \ind_{|Y-\mu|>a} \right] \leq 2E\left[Y^2 \ind_{|Y|>a/2}\right] + 2\mu^2, \quad \text{if } |\mu|<\frac{a}{2}. 
\]
To apply this simple bound to the expectations in \eqref{Lind} we need to check that 
\begin{equation}\label{tmeancheck}
 \E\left[ \tilde{Z}_{T_{k,n}}^{(k)} \ind_{A_{k,n}} \right] \leq \frac{\e s_n}{2}, \qquad \forall k \in I_{n,\delta_n}^-. 
\end{equation}
It follows from Lemma \ref{cor-L1tail} that $\E\left[ \tilde{Z}_{T_{k,n}}^{(k)} \ind_{A_{k,n}} \right] \leq C \frac{T_{k,n}}{s_n/\sqrt{\log s_n}}$. 
Also, note that by Corollary \ref{k2-meanvar} there is a constant $C$ such that $k \in I_{n,\delta_n}^-$ implies that $T_{k,n} \log T_{k,n} \leq C \sigma_{k,n}^2 \leq C \delta_n s_n^2$, and thus $C \frac{T_{k,n}}{s_n/\sqrt{\log s_n}} \leq \frac{\e s_n}{2}$
 for all $n$ large enough and $k \in I_{n,\delta_n}^-$. 
This completes the verification of \eqref{tmeancheck}, and thus to check \eqref{Lind} it is enough to prove for all $\e>0$ that
\begin{equation}\label{Lind2}
 \lim_{n\to\infty} \frac{1}{s_n^2} \sum_{k \in I_{n,\delta_n}^-}
\left\{ 
\E\left[ (\tilde{Z}_{T_{k,n}}^{(k)})^2 \ind_{A_{k,n} \cap \{|\tilde{Z}_{T_{k,n}}^{(k)}| > \e s_n \} } \right]  
+ \E\left[ \tilde{Z}_{T_{k,n}}^{(k)} \ind_{A_{k,n}} \right]^2 
 \right\} = 0.
\end{equation} 
For the first expectation inside the sum in \eqref{Lind2}, note that 
for $n$ sufficiently large (so that $1/\sqrt{\log s_n} < \epsilon$) we have
\begin{align*}
A_{k,n} \cap \left\{|\tilde{Z}_{T_{k,n}}^{(k)}| > \e s_n \right\} 
&= \left\{ \e s_n < |\tilde{Z}_{T_{k,n}}^{(k)}| \leq \frac{s_n}{\sqrt{\log s_n}} \vee \sqrt{T_{k,n}} \log^4 T_{k,n} \right\} \\
&= \left\{ \e s_n < |\tilde{Z}_{T_{k,n}}^{(k)}| \leq \sqrt{T_{k,n}} \log^4 T_{k,n} \right\},  
\end{align*}
and thus
\[
\E\left[ (\tilde{Z}_{T_{k,n}}^{(k)})^2 \ind_{A_{k,n} \cap \{|\tilde{Z}_{T_{k,n}}^{(k)}| > \e s_n \} } \right]
\leq \E\left[ \tilde{Z}_{T_{k,n}}^2 \ind_{ \e s_n < |\tilde{Z}_{T_{k,n}}| \leq \sqrt{T_{k,n}}\log^4 T_{k,n} } \right].
\]
Note that this shows that the expectation above is zero unless $\sqrt{T_{k,n}} \log^4 T_{k,n} > \e s_n$, and since for $n$ large enough we have $\e s_n \geq \sqrt{s_n} \log^4 s_n$, we can conclude that the above expectation is zero unless $T_{k,n} > s_n$. 
To bound this truncated second moment in the case $T_k > s_n$ we would like to use Lemma \ref{cor-L2trunc}, 
but to apply this we need to first check that that $\e s_n > \sqrt{T_k \log T_k}$. 
However, 
it follows from Theorem \ref{k2-meanvar} and the definition of  $I_{n,\delta_n}^-$ that 
for $n$ sufficiently large we have
$T_{k,n}\log T_{k,n} \leq C \delta_n s_n^2 \leq \epsilon^2 s_n^2$ for all $k \in I_{n,\delta_n}^-$. Therefore, we can conclude for $n$ large and $k \in I_{n,\delta_n}^-$ that 
\begin{align}
\E\left[ (\tilde{Z}_{T_{k,n}}^{(k)})^2 \ind_{A_{k,n} \cap \{|\tilde{Z}_{T_{k,n}}^{(k)}| > \e s_n \} } \right]
&\leq C \left( T_{k,n} \log\log T_{k,n} +  \frac{T_{k,n}^2 \log^2 T_{k,n}}{\e^2 s_n^2} \right) \ind_{T_{k,n} > s_n} \nonumber \\
&\leq C \sigma_{k,n}^2 \left( \frac{\log\log T_{k,n}}{\log T_{k,n}} + \frac{\sigma_{k,n}^2}{\e s_n^2} \right) \ind_{T_{k,n} > s_n} \nonumber \\
&\leq  C \sigma_{k,n}^2 \left( \frac{\log\log s_n}{\log s_n} + \frac{\delta_n}{\e} \right). \label{Lind2a}
\end{align}
For the second expectation in the sum in \eqref{Lind2}, recall that if $T_{k,n} \leq \sqrt{s_n}$ then $A_{k,n}^c = \emptyset$ and thus 
this expectation is zero. 
On the other hand, since 
\[
|\E[ \tilde{Z}_{T_{k,n}}^{(k)} \ind_{A_{k,n}} ] | = |\E[ \tilde{Z}_{T_{k,n}}^{(k)} \ind_{A_{k,n}^c} ]|  \leq \E[ |\tilde{Z}_{T_{k,n}}| \ind_{|\tilde{Z}_{T_{k,n}} > \sqrt{T_{k,n}}\log^4 T_{k,n}} ],
\]
we can also use Lemma \ref{cor-L1tail} to obtain the bound 
\begin{equation}\label{Lind2b}
\E[ \tilde{Z}_{T_{k,n}}^{(k)} \ind_{A_{k,n}} ]^2
\leq \frac{C T_{k,n}}{\log^8 T_{k,n}} \ind_{T_{k,n} > \sqrt{s_n}}
\leq \frac{C \sigma_{k,n}^2}{\log^9 T_{k,n}} \ind_{T_{k,n} > \sqrt{s_n}}
\leq  \frac{C \sigma_{k,n}^2 }{\log^9 s_n}. 
\end{equation}
Combining the estimates in \eqref{Lind2a} and \eqref{Lind2b} 
and noting $\sum_{k \in I_{n,\delta_n}^-} \sigma_{k,n}^2 \leq \sum_{k=1}^{\ell_n+1} \sigma_{k,n}^2 = s_n^2$, one obtains \eqref{Lind2}. 
This completes the verification of the Lindeberg condition and completes the proof of the lemma. 
\end{proof}

Having completed the preparatory steps, we are now ready to give the proof of Theorem \ref{thm:RWCRElim-k2}. 

\begin{proof}[Proof of Theorem \ref{thm:RWCRElim-k2}]
Let $\delta_n \to 0$ be as in Corollary \ref{cor-step2} and let 
\begin{equation}\label{bnform}
b_n^2 = \frac{\beta^2 s_{n,\delta_n,+}^2 + \tilde{s}_{n,\delta_n}^2}{s_n^2}.
\end{equation}
Note that since
\[
 \Var\left( \tilde{Z}_{T_{k,n}}^{(k)} \ind_{A_{k,n}} \right)
 \leq \E\left[ \left( \tilde{Z}_{T_{k,n}}^{(k)}  \right)^2\ind_{A_{k,n}} \right] 
 \leq \E\left[ \left( \tilde{Z}_{T_{k,n}}^{(k)}  \right)^2 \right] = \sigma_{k,n}^2,
\]
we have 
that $\tilde{s}_{n,\delta_n}^2 \leq s_{n,\delta_n,-}^2$, 
and because $s_n^2 = s_{n,\delta_n,+}^2 + s_{n,\delta_n,-}^2$ and $\beta<1$, it then follows that $b_n \leq 1$ for all $n$. 
Moreover, since we have shown that \eqref{tsndnrat} holds whenever $\liminf_{n\to\infty} \frac{s_{n,\delta_n,-}}{s_n} > 0$, we can also conclude that 
$\liminf_{n\to\infty} b_n \geq \beta$. 
Therefore, if we can prove that 
\begin{equation}\label{Glimbn}
 \frac{X_n - \E[X_n]}{b_n s_n} \underset{n\to\infty}{\Longrightarrow} \Phi, 
\end{equation}
then the conclusion of Theorem \ref{thm:RWCRElim-k2} will hold with $\beta_n = b_n \vee \beta$. 

We will prove  
\eqref{Glimbn}
by proving that every subsequence has a further subsequence that converges to a standard Gaussian. 
To this end, let $n_j \to \infty$ be a fixed subsequence and consider the following cases. 

\textbf{Case 1: $\liminf_{j\to\infty} \frac{s_{n_j,\delta_{n_j},-}}{s_{n_j}} = 0$.}
By passing to a further subsequence we can assume that $\lim_{j\to\infty} \frac{s_{n_j,\delta_{n_j},-}}{s_{n_j}} = 0$. It then follows from Corollary \ref{cor-step3} that $\frac{X_{n_j} - \E[X_{n_j} ]}{\beta s_{n_j}} \Rightarrow \Phi$. 
Since $\tilde{s}_{n,\delta_n} \leq s_{n,\delta_n,-}$ always holds, then the definition of $b_n$ and the assumption on the subsequence in this case implies that $\frac{b_{n_j}s_{n_j}}{\beta s_{n_j}} \to 1$ as $j\to \infty$, so that $\frac{X_{n_j} - \E[X_{n_j} ]}{b_{n_j} s_{n_j}} \Rightarrow \Phi$ as $j\to\infty$.

\textbf{Case 2:} $s_{n_j,\delta_{n_j},+} = 0$ for infinitely many $j\geq 1$.
In this case, by passing to a further subsequence we can assume that $s_{n_j,\delta_{n_j},+} = 0$ for all $j$, in which case 
$X_{n_j} - \E[ X_{n_j} ] = \mathcal{Z}_{n_j,\delta_{n_j}}^-$,  
$s_{n_j,\delta_{n_j},-} = s_n$, and $b_{n_j} s_{n_j} = \tilde{s}_{n_j,\delta_{n_j}}$ so that Lemma \ref{lem-smallparts} implies that 
\[
\frac{X_{n_j} - \E[ X_{n_j} ] }{b_{n_j} s_{n_j} } 
= \frac{\mathcal{Z}_{n_j,\delta_{n_j}}^-}{\tilde{s}_{n_j,\delta_{n_j}}}
\underset{j\to\infty}{\Longrightarrow} \Phi.
\]

\textbf{Case 3:} $\liminf_{j\to\infty} \frac{s_{n_j,\delta_{n_j},-}}{s_{n_j}} > 0$ and $s_{n_j,\delta_{n_j},+} > 0$ for all but finitely many $j$.
Since $s_{n_j,\delta_{n_j},+}>0$ for all $j$ large enough we can decompose 
\begin{equation}\label{gendec}
\frac{X_{n_j} - \E[ X_{n_j} ] }{b_{n_j} s_{n_j} } 
= \frac{\beta s_{n_j,\delta_{n_j},+} }{b_{n_j} s_{n_j}} \frac{\mathcal{Z}_{n_j,\delta_{n_j}}^+}{\beta s_{n_j,\delta_{n_j},+} } + \frac{\tilde{s}_{n_j,\delta_{n_j}}}{b_{n_j} s_{n_j}} \frac{\mathcal{Z}_{n_j,\delta_{n_j}}^-}{\tilde{s}_{n_j,\delta_{n_j}}}. 
\end{equation}
Corollary \ref{cor-step2} implies that $\frac{\mathcal{Z}_{n_j,\delta_{n_j}}^+}{\beta s_{n_j,\delta_{n_j},+} } \Rightarrow \Phi$, and 
Lemma \ref{lem-smallparts}
gives that $\frac{\mathcal{Z}_{n_j,\delta_{n_j}}^-}{\tilde{s}_{n_j,\delta_{n_j}}} \Rightarrow \Phi$ also. Also, the two terms on the right side of \eqref{gendec} are independent random variables and the squares of the coefficients of the two terms sum to 1 by the definition of $b_n$. This implies that the right side of \eqref{gendec} converges to $\Phi$ in distribution as $j\to\infty$. 
\end{proof}

A disadvantage to the above proof of Theorem \ref{thm:RWCRElim-k2} is that the formula for the scaling multiple $\beta_n$ depends on the choice of the sequence $\delta_n \to 0$ in Corollary \ref{cor-step2} which is non-explicit. 
The following lemma gives another sequence that is asymptotically equivalent to the scaling constants used in the proof above, but which has the advantage of not relying on the choice of $\delta_n$ and thus which can be used to compute the scaling constants $\beta_n$ for certain choices of cooling maps. 

\begin{lemma}
 The sequence $\beta_n$ in Theorem \ref{thm:RWCRElim-k2} can be chosen as 
\begin{equation}\label{tbnform}
\beta_n = \tilde{b}_n \vee \beta, 
\quad\text{where}\quad
\tilde{b}_n = \frac{\sum_{k=1}^{\ell_n+1} \Var\left( \tilde{Z}_{T_{k,n}} \mathbf{1}_{A_{k,n} }\right) }{s_n^2}, 
\end{equation}
and where the set $A_{k,n}$ is defined as in \eqref{tsndn}.
\end{lemma}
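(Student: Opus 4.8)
The plan is to show that the sequence $\tilde b_n$ in \eqref{tbnform} agrees, up to an additive $o(1)$ error, with the scaling multiple $b_n^2$ constructed in the proof of Theorem~\ref{thm:RWCRElim-k2} (see \eqref{bnform}), provided the auxiliary sequence $\delta_n\to0$ from Corollary~\ref{cor-step2} is chosen to decay slowly enough; the conclusion then follows from the convergence $\frac{X_n-\E[X_n]}{b_n s_n}\Rightarrow\Phi$ established there, together with Slutsky's theorem. Recall that $b_n^2=\big(\beta^2 s_{n,\delta_n,+}^2+\tilde s_{n,\delta_n}^2\big)/s_n^2$ with $\tilde s_{n,\delta_n}^2=\sum_{k\in I_{n,\delta_n}^-}\Var\big(\tilde Z_{T_{k,n}}\mathbf 1_{A_{k,n}}\big)$ and $s_{n,\delta_n,+}^2=\sum_{k\in I_{n,\delta_n}^+}\sigma_{k,n}^2$, whereas by definition $\tilde b_n s_n^2=\sum_{k=1}^{\ell_n+1}\Var\big(\tilde Z_{T_{k,n}}\mathbf 1_{A_{k,n}}\big)=\tilde s_{n,\delta_n}^2+\sum_{k\in I_{n,\delta_n}^+}\Var\big(\tilde Z_{T_{k,n}}\mathbf 1_{A_{k,n}}\big)$. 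Subtracting,
\[
s_n^2\big(\tilde b_n-b_n^2\big)=\sum_{k\in I_{n,\delta_n}^+}\Big(\Var\big(\tilde Z_{T_{k,n}}\mathbf 1_{A_{k,n}}\big)-\beta^2\sigma_{k,n}^2\Big),
\]
and since $\sum_{k\in I_{n,\delta_n}^+}\sigma_{k,n}^2\leq s_n^2$, it is enough to prove the uniform ratio estimate $\sup_{k\in I_{n,\delta_n}^+}\big|\Var(\tilde Z_{T_{k,n}}\mathbf 1_{A_{k,n}})/\sigma_{k,n}^2-\beta^2\big|\to0$.

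To establish this, I would first use the freedom in Corollary~\ref{cor-step2}: the sequence $\delta_n\to0$ there may be taken to decay as slowly as desired, so (recall $s_n\to\infty$) we may require $\delta_n\geq1/\log s_n$ for all large $n$. Then for every $k\in I_{n,\delta_n}^+$ we have $\sigma_{k,n}^2>\delta_n s_n^2\to\infty$, which by Corollary~\ref{k2-meanvar} forces $T_{k,n}\to\infty$ uniformly in $k$ and $T_{k,n}\log T_{k,n}\geq c\,\delta_n s_n^2$; an elementary computation then shows $\sqrt{T_{k,n}}\log^4 T_{k,n}\geq \frac{s_n}{\sqrt{\log s_n}}$ for all $n$ large, so that $A_{k,n}=\{\,|\tilde Z_{T_{k,n}}^{(k)}|\leq\sqrt{T_{k,n}}\log^4 T_{k,n}\,\}$. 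For such $k$, Lemma~\ref{cor-smalltsm} (applied with $m=T_{k,n}$) gives $\E\big[(\tilde Z_{T_{k,n}})^2\mathbf 1_{A_{k,n}}\big]\sim b^2\,T_{k,n}\log T_{k,n}$ while Corollary~\ref{k2-meanvar} gives $\sigma_{k,n}^2\sim(b^2+K_0 v)\,T_{k,n}\log T_{k,n}$, so the ratio of the truncated second moment to $\sigma_{k,n}^2$ tends to $b^2/(b^2+K_0 v)=\beta^2$ (this is the limit already recorded in the proof of Lemma~\ref{lem-smallparts}); moreover the discrepancy between $\Var(\tilde Z_{T_{k,n}}\mathbf 1_{A_{k,n}})$ and $\E[(\tilde Z_{T_{k,n}})^2\mathbf 1_{A_{k,n}}]$ equals $\E[\tilde Z_{T_{k,n}}\mathbf 1_{A_{k,n}^c}]^2=O\big(T_{k,n}/\log^8 T_{k,n}\big)=o(\sigma_{k,n}^2)$ by Lemma~\ref{cor-L1tail}. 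This yields the uniform ratio estimate, hence $\tilde b_n-b_n^2\to0$; since $\liminf_n b_n\geq\beta>0$ we also get $b_n/\big(\sqrt{\tilde b_n}\vee\beta\big)\to1$, and Slutsky's theorem applied to $\frac{X_n-\E[X_n]}{b_n s_n}\Rightarrow\Phi$ shows that the choice of $\beta_n$ in \eqref{tbnform} is admissible.

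The main obstacle is the uniformity of the convergence $\Var(\tilde Z_{T_{k,n}}\mathbf 1_{A_{k,n}})/\sigma_{k,n}^2\to\beta^2$ over $k\in I_{n,\delta_n}^+$. The delicate point is that $A_{k,n}$ truncates at level $\frac{s_n}{\sqrt{\log s_n}}\vee\sqrt{T_{k,n}}\log^4 T_{k,n}$, and for cooling intervals of intermediate length the first term can dominate; the truncation then lies above the natural scale $\sqrt{T_{k,n}}\log^4 T_{k,n}$ and a portion of the left-tail (slowdown) second moment, of order $K_0 v\,T_{k,n}\log T_{k,n}$, leaks into $\Var(\tilde Z_{T_{k,n}}\mathbf 1_{A_{k,n}})$ and spoils the clean $\beta^2$ limit. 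Forcing $\delta_n\to0$ slowly (so that $I_{n,\delta_n}^+$ contains only genuinely long intervals, for which the truncation level is automatically the natural one) removes this difficulty, and costs nothing since the proof of Theorem~\ref{thm:RWCRElim-k2} is valid for every admissible $\delta_n\to0$.
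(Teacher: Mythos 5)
Your proposal is correct and follows essentially the same route as the paper: reduce to the uniform estimate $\sup_{k\in I_{n,\delta_n}^+}\bigl|\Var(\tilde Z_{T_{k,n}}\mathbf 1_{A_{k,n}})/(\beta^2\sigma_{k,n}^2)-1\bigr|\to0$, choose $\delta_n\geq 1/\log s_n$ so that for $k\in I_{n,\delta_n}^+$ the truncation in $A_{k,n}$ is at level $\sqrt{T_{k,n}}\log^4 T_{k,n}$ and $T_{k,n}$ is uniformly large, and then invoke Lemma \ref{cor-smalltsm} together with Corollary \ref{k2-meanvar}. Your extra step comparing the truncated variance with the truncated second moment via Lemma \ref{cor-L1tail} is harmless but not needed, since Lemma \ref{cor-smalltsm} already gives the asymptotics for the truncated variance itself.
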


\begin{proof}
Comparing \eqref{tbnform} with \eqref{bnform} and \eqref{tsndn}, we see that it's enough to prove that 
\begin{equation}\label{tvarsc}
\lim_{n\to\infty} \sup_{k \in I_{n,\delta_n}^+} \left| \frac{\Var\left( \tilde{Z}_{T_{k,n}}^{(k)} \mathbf{1}_{A_{k,n} }\right)}{\beta^2 \sigma_{k,n}^2} - 1 \right| = 0, 
\end{equation}
where again $\delta_n$ is the sequence from Corollary \ref{cor-step2} (if $I_{n,\delta_n}^+ = \emptyset$ then the supremum in the display above is by convention taken to be zero).
Note that we are always free to pick the sequence $\delta_n\to 0$ slow enough so that $\delta_n \geq \frac{1}{\log s_n}$. If this is the case, then for $n$ large enough and $k \in I_{n,\delta_n}^+$ we have that 
\[
 \frac{s_n}{\sqrt{\log s_n}} \leq \sqrt{\delta_n s_n^2} \leq \sigma_{k,n} \leq C \sqrt{T_{k,n} \log T_{k,n} } \leq \sqrt{T_{k,n} } \log^4 T_{k,n}. 
\]
Therefore,  for $n$ large enough and $k \in I_{n,\delta_n}^+$ we have 
\[
\Var\left( \tilde{Z}_{T_{k,n}}^{(k)} \mathbf{1}_{A_{k,n} }\right)
= \Var\left( \tilde{Z}_{T_{k,n}}^{(k)} \mathbf{1}_{ |\tilde{Z}_{T_{k,n}}^{(k)}| \leq \sqrt{T_{k,n}} \log^4 T_{k,n} }\right), 
\quad \text{and}\quad 
T_{k,n} \geq \frac{s_n^2}{\log^5 s_n}.
\] 
From this, it follows that for $n$ large enough we have 
\[
\sup_{k \in I_{n,\delta_n}^+} \left| \frac{\Var\left( \tilde{Z}_{T_{k,n}}^{(k)} \mathbf{1}_{A_{k,n} }\right)}{\beta^2 \sigma_{k,n}^2} - 1 \right|
\leq 
\sup_{m\geq \frac{s_n^2}{\log^5 s_n}} \left| \frac{ \Var\left( \tilde{Z}_m \ind_{|\tilde{Z}_m| \leq \sqrt{m}\log^4 m } \right) }{\beta^2 \Var(Z_m) } - 1 \right|, 
\]
and then \eqref{tvarsc} follows from this together with Lemma \ref{cor-smalltsm}. 
\end{proof}

\section{Examples}\label{sec:ex}
In this section we consider specific cooling maps that display interesting/illustrative behaviour in the study of the limit distribution of RWCRE in the subbalistic ($\kappa \in(0,1)$) and in the Gaussian critical ($\kappa = 2$) regime.
To explore the features of RWCRE, we consider both regular cooling maps (maps for which $T_k = \tau_k - \tau_{k-1}$ admits an asymptotic behavior) as well as some cooling maps with more irregular behavior. 

\subsection{Sub-balistic regime}

For the examples in this subsection we will assume that the distribution $\mu$ on environments is $\kappa$-regular with $\kappa \in (0,1)$. 

\begin{example}[Polynomial cooling when $\kappa \in (0,1)$]\label{ex:k1poly}
Let $T_k \sim A k^\alpha$ for some constants $A,\alpha>0$. 
Since $T_k\to\infty$ we can use Corollary \ref{Cor:lambda_explicit} to determine the limiting distributions. It is easy to check for this example that $\max_k \tilde{\lambda}_{\tau,n}(k) \leq \frac{C}{\sqrt{\ell_n}} \to  0$, 
and thus we can conclude that
$\frac{X_n-\E[X_n]}{\sqrt{\Var(X_n)}} \Rightarrow \Phi$. 
Moreover, we can use Theorem \ref{thm:lp_cov} to replace the scaling by the standard deviation  of $X_n$ with a more explicit scaling in this case. Indeed, using $\Var(Z_n) \sim \sMit n^{2\kappa}$ we have in this case that
\[
\Var(X_n) = \sum_{k=1}^{\ell_n} \Var(Z^{(k)}_{T_k}) + \Var(Z^{(\ell_n +1)}_{n-\tau(\ell_n)} ) \sim \frac{\sMit^2 A^{2\kappa}}{2\alpha \kappa+1} \ell_n^{2\alpha \kappa+1},
\qquad \text{as } n\to\infty.
\]
Since $\ell_n \sim \left( \frac{\alpha+1}{A} \right)^{\frac{1}{\alpha+1}} n^{\frac{1}{\alpha+1}}$ we can then conclude that
\[
\frac{X_n-\E[X_n]}{\sigma_{A,\alpha} n^{\frac{2\alpha\kappa+1}{2(\alpha+1)}} }
\underset{n\to\infty}{\Longrightarrow} \Phi, 
\quad \text{where } 
\sigma_{A,\alpha}^2 = \frac{\sMit^2 A^{2\kappa} }{2\alpha\kappa+1} \left( \frac{\alpha+1}{A} \right)^{\frac{2\alpha\kappa+1}{\alpha+1}}. 
\]
Note that the scaling exponent $\frac{2\alpha\kappa+1}{2(\alpha+1)}$ converges to $\frac{1}{2}$ as $\alpha\to 0$ and to $\kappa$ as $\alpha \to \infty$ (if $\kappa=1/2$ then the scaling exponent is always to $1/2$ for all $\alpha>0$). 
\end{example}

\begin{example}[Exponential cooling when $\kappa \in (0,1)$]\label{ex:k1exp}
Let $T_k \sim Ce^{ck}$ for $C,c>0$. 
Again we can use Corollary \ref{Cor:lambda_explicit} to determine the limiting distributions.
In this case one can only obtain limiting distributions along certain subsequences, but the limiting distribution is always a sum of independent (normalized) Mittag-Leffler random variables. For simplicity we will only describe the limiting distribution along the subsequence $n_j = \tau(j)$.
For this choice of $n_j$ we have 
\[
(V_{n_j})^2 = \sum_{k=1}^j (T_k)^{2\kappa} 
\sim \sum_{k=1}^j (C e^{ck} )^{2\kappa} 
\sim \frac{(C e^c)^{2\kappa} }{e^{2c\kappa}-1} e^{2c\kappa j}, \quad \text{as } j\to \infty. 
\]
Since for $k\geq 1$ fixed and $j$ large enough we have that $\tilde{\lambda}^{\downarrow}_{\tau,n_j}(k) = \frac{(T_{j-k+1})^\kappa}{V_{n_j}}$, it follows that 
\[
\lim_{j\to\infty} \lambda^{\downarrow}_{\tau,n_j}(k) = \lambda_{c,*}(k):= \sqrt{(\theta_c^{-2}-1)}\, (\theta_c)^k, \quad \forall k\geq 1, \qquad \text{where } \theta_c = e^{-c\kappa}. 
\]
Since $\sum_{k\geq 1} (\lambda_{c,*}(k))^2 = 1$ we can conclude from Corollary \ref{Cor:lambda_explicit} that 
\begin{equation}\label{sslim-exp}
 \frac{X_{n_j} - \E[X_{n_j}]}{\sqrt{\Var(X_{n_j})}} \underset{j\to\infty}{\Longrightarrow} \left( \cMit \right)^{\otimes \lambda_{c,*}}. 
\end{equation}
We can also obtain more explicit centering and scaling terms for this example. Indeed, it follows from Theorem \ref{thm:lp_cov} that 
$\E[X_{n_j}] \sim \frac{\mu_{\mathfrak{M}} (C e^c)^\kappa}{e^{c\kappa}-1} e^{c\kappa j}$ and 
$\sqrt{\Var(X_{n_j})} \sim \frac{\sMit (C e^c)^\kappa }{\sqrt{e^{2c\kappa}-1}} e^{c\kappa j}$,
and since $n_j = \tau(j) \sim \frac{C e^c}{e^c-1} e^{cj}$ implies that $(C e^c)^\kappa e^{c\kappa j} \sim (e^c-1)^\kappa n_j^\kappa$ we can re-write these asymptotics as 
$\E[X_{n_j}] \sim \frac{\mu_{\mathfrak{M}} (e^c-1)^\kappa}{e^{c\kappa}-1} n_j^\kappa$ and 
$\sqrt{\Var(X_{n_j})} \sim \frac{\sMit (e^c-1)^\kappa }{\sqrt{e^{2c\kappa}-1}} n_j^\kappa$.
Since $\sum_{k\geq 1} \lambda_{c,*}(k) = \frac{\sqrt{e^{2c\kappa}-1}}{e^{c\kappa}-1}$, we have by Remark \ref{rem:replacemean} that we can remove the centering terms from both the left and right sides of \eqref{sslim-exp}. Finally, since $\frac{\sqrt{\Var(X_{n_j})}}{n_j^\kappa} \to \frac{\sMit(e^c-1)^\kappa}{\sqrt{e^{2c\kappa}-1}}$ 
we obtain the simplified form of the limiting distribution where the scaling is the same as in the RWRE case
\[
\frac{X_{n_j} }{ n_j^\kappa } 
\underset{j\to\infty}{\Longrightarrow}
\frac{(e^c-1)^\kappa }{\sqrt{e^{2c\kappa}-1}} \sum_{k=1}^\infty \lambda_{c,*}(k) \Mit^{(k)}
= (e^c-1)^\kappa \sum_{k=1}^\infty e^{-c \kappa k} \Mit^{(k)}.
\]

\end{example}

\begin{example}[super exponential cooling when $\kappa \in (0,1)$]
If $\log(T_k) \sim e^{ck}$ for some $c>0$, 
then all subsequential limits are a linear combination of one or two independent Mittag-Leffler random variables.
To give a specific example consider the cooling map with $\tau(j) = 2^{2^j}$ for $j\geq 1$. Then, fix a parameter $\theta \geq 0$ and consider the subsequence $n_j = \fl{(1+\theta) 2^{2^j}}$. Note that for $j$ large enough we have $\tau(j) \leq n_j < \tau(j+1)$, so that the representation in \eqref{basicdec} becomes $X_{n_j} \overset{\text{Law}}{=} \sum_{k=1}^j Z^{(k)}_{T_j} + Z^{(j+1)}_{\fl{\theta 2^{2^j}}}$. 
One can show
that only the last two terms in the sum on the right survive in the limiting distribution. 
Indeed, since the variance asymptotics in Theorem \ref{thm:lp_cov} imply that $\Var(X_n) \sim \Var(Z_{T_j}) + \Var(Z_{\fl{\theta 2^{2^j}}}) \sim \sMit^2 (1+\theta^{2\kappa}) n_j^{2\kappa}$, we can then apply Corollary \ref{Cor:lambda_explicit} to get 
\[
 \frac{X_{n_j}-\E[X_{n_j}] }{\sqrt{\Var(X_{n_j})}} \underset{j\to\infty}{\Longrightarrow} \frac{1}{\sqrt{1+\theta^{2\kappa}}} \left( \frac{ \Mit^{(1)}-\mu_{\mathfrak{M}}}{\sMit} \right) + \frac{\theta^\kappa }{\sqrt{1+\theta^{2\kappa}}} \left( \frac{ \Mit^{(2)}-\mu_{\mathfrak{M}}}{\sMit} \right). 
\]
Finally, since the mean and variance asymptotics in Theorem \ref{thm:lp_cov} imply that $\E[X_{n_j}] \sim \mu_{\mathfrak{M}}(1+\theta^\kappa)n_j^\kappa$ and 
$\sqrt{\Var(X_{n_j})} \sim \sMit \sqrt{1+\theta^{2\kappa}} n_j^\kappa$ we can rewrite the limiting distribution above as 
\[
 \frac{X_{n_j}}{n_j^\kappa} \underset{j\to\infty}{\Longrightarrow} 
 \Mit^{(1)} + \theta^\kappa \Mit^{(2)}. 
\]
\end{example}

\begin{example}[Mixtures of Mittag-Leffler and Gaussian when $\kappa \in (0,1)$]\label{ex:k1exp-mix}
The basic idea to build mixtures of Mittag-Leffler random variables with Gaussian  is to build a cooling map by interweaving a fast growing cooling map where the limiting distribution is a sum of Mittag-Leffler distributions with a slow growing cooling map where the limiting distribution is Gaussian. 
To give a specific example of this, let $\tau$ be the cooling map with cooling intervals given by 
\[
T_{2^i} = \left\lfloor 2^{(i-1)/(2\kappa)} \right\rfloor \quad \text{for } i\geq 1, \quad \text{and}\quad T_k = 1 \quad \text{if } k \notin \{2^i: \ i\in \N\}. 
\]
We will compute the limiting distribution along the subsequence $n_j = \tau(2^j)$. 
To this end, first note that from Theorem \ref{thm:lp_cov} that 
\begin{align*}
 \Var(X_{n_j}) &= \sum_{i=1}^j \Var\left(Z_{\fl{2^{(i-1)/(2\kappa)}}} \right) + (2^j-j) \Var(Z_1) \\
 &\sim \sum_{i=1}^j \sMit^2 2^{i-1} + 2^j \Var(Z_1) 
 \sim \left(\sMit^2 + \Var(Z_1) \right) 2^j, \quad \text{as } j\to \infty. 
 \end{align*}
Moreover, since the $k$-th largest cooling interval among the first $2^j$ cooling intervals is $T_{2^{j-k+1}} = \fl{2^{(j-k)/(2\kappa)}}$ we get that for any fixed $k\geq 1$,
\[
\lim_{j\to\infty} \lambda^\downarrow_{\tau,n_j}(k) 
= \lim_{j\to\infty} \sqrt{ \frac{ \Var\left(Z_{\fl{2^{(j-k)/(2\kappa)}}}\right) }{\Var(X_{n_j})} } 
= \sqrt{ \frac{\sMit^2}{\sMit^2 + \Var(Z_1)} } 2^{-k/2}
=: \lambda_*(k). 
\]
Since $a(\lambda_*) = \left( \frac{\Var(Z_1)}{\sMit^2 + \Var(Z_1)} \right)^{1/2} \in (0,1)$,
we get in this case that the limiting distribution is a mixture of sums of independent Mittag-Leffler random variables and an independent Gaussian. 
More precisely, 
\[
\frac{X_{n_j}-\E[X_{n_j}]}{\sqrt{\Var(X_{n_j})}}  
 \underset{j\to\infty}{\Longrightarrow}
\left( \cMit \right)^{\otimes \lambda_*}
+  \left( \frac{\Var(Z_1)}{\sMit^2 + \Var(Z_1)} \right)^{1/2} \Phi. 
\]
\end{example}

\begin{example}[Arbitrary mixtures of Mittag-Leffler and Gaussian when $\kappa \in (0,1)$]\label{ex:arbMLGmix}
 A natural question is whether or not given some $\lambda_* \in \ell^2$ with $\sum_{k\geq 1} \lambda_*(k)^2 \leq 1$ one can find a cooling map and a subsequence $n_j\to \infty$ such that one has a limiting distribution of the form $\left( \cMit \right)^{\otimes \lambda_*}
+ a(\lambda_*)\Phi$ as in \eqref{Lawsub}. In this example we give algorithms showing how this can indeed be done. 

Without loss of generality, we may always assume that $\lambda_*(k)$ is non-increasing in $k$. Our algorithm will be slightly different depending on whether or not $\lambda_*(k)$ is eventually zero. 
In both cases, however, we will use an iterative method to construct a cooling map $\tau$ along with a sequence $\{N_j\}_{j\geq 1}$, and then we will let $n_j = \tau(N_j)$ for $j\geq 1$. 
The cooling map will have the property that $\lim_{k\to\infty} T_k = \infty$ so that we may apply Corollary \ref{Cor:lambda_explicit} to identify the subsequential limiting distribution. 

\noindent\textbf{Case I: $\lambda_*(k) > 0$ for all $k\geq 1$.}
We begin by defining the sequence $\{N_j\}_{j\geq 1}$ by letting $N_0=0$ and then letting 
\[
 N_j = N_{j-1} + j + K_j, \qquad \text{where } \quad K_j = \left\lfloor \left( \frac{a(\lambda_*)}{\lambda_*(j)} \right)^2 \right\rfloor, \qquad j\geq 1.  
\]
Before constructing the cooling map, we recall the  
scaling factor $V_n$ defined in Corollary \ref{Cor:lambda_explicit} and let $\mathfrak{V}_0=1$ and
\begin{equation}\label{mfVjdef}
 \mathfrak{V}_j = V_{\tau(N_j)} = \sqrt{ \sum_{k\leq N_j} (T_k)^{2\kappa} }, \qquad j\geq 1. 
\end{equation}
We will now define the cooling intervals $\{T_k\}_{k\geq 1}$ as follows.
Since each integer $k\geq 1$ is in a unique interval $(N_{j-1},N_j]$ for some $j\geq 1$ we let 
\[
 T_k = 
 \begin{cases}
  \left\lceil \left(\frac{\mathfrak{V}_{j-1} \lambda_*(\ell)}{\lambda_*(j)} \right)^{1/\kappa} \right\rceil 
  &\text{if } k=N_{j-1}+\ell \text{ for some } \ell = 1,2,\ldots j \\
  \left\lceil \mathfrak{V}_{j-1}^{1/\kappa} \right\rceil & \text{if } N_{j-1} + j < k \leq N_j. 
 \end{cases}
\]

Now, it follows easily from \eqref{mfVjdef} and our assumption that $\lambda_*$ is non-increasing that for $j\geq \ell\geq 1$ we have 
$ \max_{k\leq N_{j-1}} T_k \leq \mathfrak{V}_{j-1}^{1/\kappa} \leq T_{N_{j-1} + j} \leq T_{N_{j-1}+\ell}$, 
and thus (recalling the definition of $\tilde{\lambda}_{\tau,n}$ in Corollary \ref{Cor:lambda_explicit})  we can conclude that 
$\tilde{\lambda}_{\tau,n_j}^\downarrow(\ell) = \tilde{\lambda}_{\tau,n_j}(N_{j-1}+\ell) = \frac{ (T_{N_{j-1}+\ell})^{\kappa} }{\mathfrak{V}_j}$
for $j\geq \ell$. Using this we can then conclude that for $\ell\geq 1$ fixed we have that 
\begin{align*}
 \lim_{j\to\infty} \widetilde{\lambda}_{\tau,n_j}^\downarrow (\ell)
 &= \lim_{j\to\infty} \frac{ \left\lceil \left(\frac{\mathfrak{V}_{j-1} \lambda_*(\ell)}{\lambda_*(j)} \right)^{1/\kappa} \right\rceil^{\kappa} }{\sqrt{\mathfrak{V}_{j-1}^2 + \sum_{i\leq j} \left\lceil \left(\frac{\mathfrak{V}_{j-1} \lambda_*(i)}{\lambda_*(j)} \right)^{1/\kappa} \right\rceil^{2\kappa} + K_j \left\lceil \mathfrak{V}_{j-1}^{1/\kappa} \right\rceil^{2\kappa} }} \\
 &= \lim_{j\to \infty} \frac{\lambda_*(\ell)}{\sqrt{ \lambda_*(j)^2 + \sum_{i\leq j} \lambda_*(i)^2 + a(\lambda_*)^2}} \\
 &= \frac{\lambda_*(\ell)}{\sum_{i\geq 1} \lambda_*(i)^2 + a(\lambda_*)^2} 
 = \lambda_*(\ell),  
\end{align*}
where in the last equality we used that $a(\lambda_*)^2 = 1 - \sum_{i\geq 1} \lambda_*(i)^2$. 

\noindent\textbf{Case II: there is a $k_0 \geq 1$ such that $\lambda_*(k) > 0 \iff k \leq k_0$.}
The algorithm is very similar in this case, but with the following changes. Now we define the sequence $\{N_j\}_{j\geq 1}$ by 
\[
 N_j = N_{j-1} + k_0 + K_j, \quad \text{where } \quad 
 K_j = \left\lfloor \left( j \, a(\lambda_*) \right)^2 \right\rfloor, \quad j\geq 1, 
\]
and for $k \in (N_{j-1},N_j]$ we let the cooling interval 
\[
 T_k = 
 \begin{cases}
  \left\lceil \left( j \mathfrak{V}_{j-1} \lambda_*(\ell) \right)^{1/\kappa} \right\rceil 
  &\text{if } k=N_{j-1}+\ell \text{ for some } \ell = 1,2,\ldots k_0 \\
  \left\lceil \mathfrak{V}_{j-1}^{1/\kappa} \right\rceil & \text{if } N_{j-1} + k_0 < k \leq N_j. 
 \end{cases}
\]

Next, since for  $j\geq 1/\lambda_*(k_0)$ we have 
\begin{equation} \label{earlyTk}
 \max_{k\leq N_{j-1}} T_k
 \leq \mathfrak{V}_{j-1}^{1/\kappa} 
 \leq \left( j \mathfrak{V}_{j-1} \lambda_*(k_0) \right)^{1/\kappa}
 \leq T_{N_{j-1} + k_0}
 \leq T_{N_{j-1}+\ell},\qquad \text{for } 1\leq \ell \leq k_0,  
 \end{equation}
it follows that for any $1 \leq \ell \leq k_0$ we have $\widetilde{\lambda}_{\tau,n_j}^\downarrow(\ell) = \widetilde{\lambda}_{\tau,n_j}(N_{j-1}+\ell)$ for all $j$ sufficiently large and thus
\begin{align*}
 \lim_{j\to\infty} \widetilde{\lambda}_{\tau,n_j}^\downarrow (\ell)
 &= \lim_{j\to\infty} \frac{\left\lceil \left( j \mathfrak{V}_{j-1} \lambda_*(\ell) \right)^{1/\kappa} \right\rceil^{\kappa} }{ \sqrt{ \mathfrak{V}_{j-1}^2 + \sum_{i\leq k_0} \left\lceil \left( j \mathfrak{V}_{j-1} \lambda_*(i) \right)^{1/\kappa} \right\rceil^{2\kappa}   + \left\lfloor \left( j a(\lambda_*) \right)^2 \right\rfloor \left\lceil \mathfrak{V}_{j-1}^{1/\kappa} \right\rceil^{2\kappa} } } \\
 &= \lim_{j\to\infty} \frac{\lambda_*(\ell)}{ \sqrt{ \frac{1}{j^2} + \sum_{i\leq k_0} \lambda_*(i)^2 + a(\lambda_*)^2}} \\
 &= \lambda_*(\ell). 
\end{align*}

Finally, we need to show that $\lim_{j\to\infty} \widetilde{\lambda}_{\tau,n_j}^\downarrow (k) = 0$ for all $k > k_0$. For this it is of course enough to show that $\lim_{j\to\infty} \widetilde{\lambda}_{\tau,n_j}^\downarrow (k_0 + 1) = 0$. 
To this end, it follows from \eqref{earlyTk} that
\[
 \widetilde{\lambda}_{\tau,n_j}^\downarrow (k_0 + 1)
 \leq \frac{\max_{k\leq N_{j-1}} (T_k)^\kappa}{\mathfrak{V}_j} \leq \frac{\mathfrak{V}_{j-1}}{\mathfrak{V}_j}, 
\]
and since it is easy to see that $\mathfrak{V}_j \sim j \mathfrak{V}_{j-1}$ as $j\to\infty$ then $\lim_{j\to\infty} \widetilde{\lambda}_{\tau,n_j}^\downarrow (k_0 + 1) = 0$. 
\end{example}

\subsection{Gaussian critical regime}

For the examples in this subsection we will assume that the distribution $\mu$ on environments is $2$-regular. The examples below demonstrate the various properties of the sequence of scaling constants $\beta_n$ in Theorem \ref{thm:RWCRElim-k2} that can be obtained by changing the cooling map $\tau$. 
Recall that the scaling constants $\beta_n$ can be given by the formula in \eqref{tbnform}. However, for this formula to be of practical use one needs some way of approximating the truncated variance terms involved. To this end, one can use the following result which follows from the tail bounds for $\tilde{Z}_n$ in Section \ref{sec:cente}. 

\begin{corollary}\label{cor-tvarlim}
Let $(Z_n)_{n\geq 0}$ be a RWRE with a $2$-regular distribution $\mu$ on environments. Then, 
\[
\lim_{n\to\infty} \sup_{x\geq \sqrt{n}\log^4 n} \left| \frac{\Var(\tilde{Z}_n \ind_{|\tilde{Z}_n| \leq x} )}{b^2 n\log n + 2K_0 v n \log\left( \frac{x \wedge (nv/2)}{\sqrt{n}} \right) }  - 1 \right| = 0. 
\]
\end{corollary}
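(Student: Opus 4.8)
The plan is to prove Corollary \ref{cor-tvarlim} by reducing the truncated variance $\Var(\tilde Z_n \ind_{|\tilde Z_n|\le x})$ to a truncated second moment and then computing that second moment using the right and left tail estimates for $\tilde Z_n$ in Section \ref{sec:cente}. First I would note that since $\E[\tilde Z_n] = 0$, we have $|\Var(\tilde Z_n \ind_{|\tilde Z_n|\le x}) - \E[\tilde Z_n^2 \ind_{|\tilde Z_n|\le x}]| = \E[\tilde Z_n \ind_{|\tilde Z_n|\le x}]^2 = \E[\tilde Z_n \ind_{|\tilde Z_n|> x}]^2$, and for $x \ge \sqrt n \log^4 n$ Lemma \ref{cor-L1tail} bounds this by $(Cn/x)^2 \le C n^2/(n\log^8 n) = Cn/\log^8 n = o(n\log n)$, so it is enough to prove the claimed asymptotics for $\E[\tilde Z_n^2 \ind_{|\tilde Z_n|\le x}]$. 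Writing $g_n(x) := b^2 n\log n + 2K_0 v n \log\left(\frac{x\wedge(nv/2)}{\sqrt n}\right)$ for the target, one checks that $g_n(x)$ is bounded below by a constant multiple of $n\log n$ uniformly in $x \ge \sqrt n \log^4 n$ (because $\log(x/\sqrt n) \ge \log(\log^4 n) \ge 0$), so an error that is $o(n\log n)$ uniformly in $x$ translates into the uniform ratio convergence.

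Next I would split $\E[\tilde Z_n^2 \ind_{|\tilde Z_n|\le x}]$ as a piece truncated at $\sqrt n \log^4 n$ plus the contribution from $\sqrt n \log^4 n < |\tilde Z_n| \le x$. For the first piece, Lemma \ref{cor-smalltsm} gives $\E[\tilde Z_n^2 \ind_{|\tilde Z_n|\le \sqrt n\log^4 n}] = b^2 n\log n (1+o(1))$, which supplies the $b^2 n\log n$ term. For the second piece, since $x\ge \sqrt n\log^4 n$ the right-tail estimate \eqref{rtail-cen} makes the contribution of $\{\tilde Z_n > \sqrt n\log^4 n\}$ exponentially negligible (this is essentially already contained in Corollary \ref{cor:rtail} applied with the mean centering), so only the left tail matters: I need
\[
\E\left[\tilde Z_n^2 \ind_{-x \le \tilde Z_n < -\sqrt n\log^4 n}\right] = 2K_0 v n \log\left(\frac{x\wedge(nv/2)}{\sqrt n}\right) + o(n\log n),
\]
uniformly in $x \ge \sqrt n\log^4 n$. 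Using the identity $\E[\tilde Z_n^2 \ind_{a<-\tilde Z_n\le b}] = a^2\P(\tilde Z_n<-a) - b^2\P(\tilde Z_n<-b) + \int_a^b 2t\,\P(\tilde Z_n<-t)\,\ud t$ with $a = \sqrt n\log^4 n$ and $b = x$, the boundary terms are controlled: $a^2\P(\tilde Z_n<-a) \le 2K_0 v n$ by \eqref{cprecise ltail} (which is $o(n\log n)$), and $b^2\P(\tilde Z_n<-b)$ is $O(n)$ for $b \le nv - \sqrt{n\log n}$ (again by \eqref{cprecise ltail}) and vanishes for $b \ge nv$, with the intermediate range $b\in[nv-\sqrt{n\log n}, nv]$ handled crudely by $b^2\P(\tilde Z_n<-b)\le (2n)^2 \cdot C\sqrt{\log n}/n^{3/2}$. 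For the integral, I would cut at $nv/2$: on $[a, \,x\wedge(nv/2)]$ the precise tail \eqref{cprecise ltail} gives $\P(\tilde Z_n<-t) \sim K_0(nv-t)t^{-2} \sim K_0 nv/t$ (the $-t$ relative to $nv$ is negligible when $t \le nv/2$), so $\int_a^{x\wedge(nv/2)} 2t\,\P(\tilde Z_n<-t)\,\ud t \approx 2K_0 v n \int_a^{x\wedge(nv/2)} t^{-1}\,\ud t = 2K_0 v n \log\left(\frac{x\wedge(nv/2)}{a}\right)$, and since $\log(a/\sqrt n) = 4\log\log n = o(\log n)$, we may replace $a = \sqrt n\log^4 n$ by $\sqrt n$ in the logarithm at cost $o(n\log n)$; on the remaining range $(nv/2, x]$ (nonempty only if $x > nv/2$) the tail \eqref{cprecise ltail} and the large-deviation bound from Theorem \ref{thm:Zn-bor} show $\P(\tilde Z_n < -t) \le C/n$ for $t\in(nv/2, nv-\sqrt{n\log n}]$ and is exponentially small for larger $t$, so that part of the integral is $O(n)$, consistent with the target not growing past $t = nv/2$.

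The main obstacle is obtaining these estimates \emph{uniformly} in $x$ over the whole range $x \ge \sqrt n\log^4 n$, including $x$ as large as $2n$; the pointwise asymptotics in $t$ from \eqref{cprecise ltail} are easy, but one must carefully check that the error terms coming from the approximation $\P(\tilde Z_n < -t) = K_0(nv-t)t^{-2}(1+o(1))$ — where the $o(1)$ is uniform over $\sqrt n\log^4 n \le t \le nv - \sqrt{n\log n}$ by \eqref{cprecise ltail} — integrate to $o(n\log n)$ after multiplication by $2t$, and that the various boundary and out-of-range contributions (the region $t > nv/2$, the region $t \in [nv-\sqrt{n\log n}, 2n]$, and the right tail $\tilde Z_n > \sqrt n\log^4 n$) are all genuinely lower order uniformly. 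Once these uniform bounds are assembled, summing the $b^2 n\log n$ from Lemma \ref{cor-smalltsm}, the $2K_0 v n \log(\frac{x\wedge(nv/2)}{\sqrt n})$ from the left-tail integral, and the $o(n\log n)$ errors, and dividing by $g_n(x) \ge c\,n\log n$, yields the uniform convergence of the ratio to $1$, completing the proof.
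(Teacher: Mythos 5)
Your proposal is correct and follows essentially the same route as the paper's proof in Appendix \ref{app:tvar}: reduce the truncated variance to the truncated second moment via Lemma \ref{cor-L1tail}, extract the $b^2 n\log n$ bulk from Lemma \ref{cor-smalltsm}, and produce the $2K_0 v n\log\left(\frac{x\wedge(nv/2)}{\sqrt{n}}\right)$ term by integrating the precise left-tail estimate \eqref{cprecise ltail} over the annulus, with the right tail made negligible by \eqref{rtail-cen}. The only organizational difference is at large $x$: the paper exploits monotonicity of $x\mapsto \E[(\tilde{Z}_n)^2\ind_{|\tilde{Z}_n|\le x}]$ together with $\Var(Z_n)\sim(b^2+K_0v)n\log n$ so that nothing beyond $x=nv/2$ ever has to be estimated, and it splits the work into a separate upper bound and lower bound, whereas you run a direct two-sided computation with boundary terms and must control the region $t\in(nv/2,x]$ explicitly. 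In doing the latter, two of your side remarks are inaccurate but harmless: $\P(\tilde{Z}_n<-b)$ does not vanish for $b\ge nv$ (only for $b$ beyond roughly $(1+v)n$), and for $t$ just above $nv-\sqrt{n\log n}$ the tail is polynomially, not exponentially, small; in both cases the crude monotonicity bound $\P(\tilde{Z}_n<-t)\le\P\left(\tilde{Z}_n<-(nv-\sqrt{n\log n})\right)\le C\sqrt{\log n}\,n^{-3/2}$, which you already invoke for the boundary term, covers the entire remaining range up to $2n$ and keeps these contributions at $o(n\log n)$, so the argument goes through as written.
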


The proof of Corollary \ref{cor-tvarlim} is straightforward (using similar methods as in the proofs of Lemmas \ref{cor-L2trunc} and \ref{cor-smalltsm}), but is rather tedious. 
Since Corollary \ref{cor-tvarlim} is needed only for the justifying the computations of $\beta_n$ in the examples below, we 
give its proof in Appendix \ref{app:tvar}.

The first two examples give families of cooling maps which show that 
we cannot change the condition in Theorem \ref{thm:RWCRElim-k2} that $\beta_n \in [\beta,1]$ to a smaller interval. 

\begin{example}[Polynomial cooling when $\kappa=2$]
If $T_k \sim A k^\alpha$ for some constants $A,\alpha>0$, we claim that the scaling constants $\beta_n$ can be chosen to be equal to a constant $\sigma_\alpha \in (\beta,1]$ which depends only on $\alpha>0$. 
More precisely, 
\begin{equation}\label{polybn}
\frac{X_n - \E[X_n]}{\sigma_\alpha \sqrt{\Var(X_n)} } \underset{n\to\infty}{\Longrightarrow} \Phi, 
\qquad 
\text{where } 
\sigma_\alpha^2 = 
 \begin{cases}
  1 & \text{if } \alpha \leq 1 \\
  \frac{b^2+\frac{K_0 v}{\alpha}}{b^2 + K_0 v} & \text{if } \alpha > 1. 
 \end{cases}
\end{equation}
(Note that $\sigma_\alpha \in (\beta,1]$ for all $\alpha>0$ with $\sigma_\alpha \to \beta$ as $\alpha \to \infty$.)
To simplify computations, we first determine the scaling constants along the subsequence $\tau(n)$ of cooling times. 
It follows from the variance asymptotics in Corollary \ref{k2-meanvar} that 
\begin{equation}\label{stn2}
s_{\tau(n)}^2 = \Var(X_{\tau(n)}) 
\sim (b^2 + K_0 v)A\alpha \sum_{k=1}^n k^\alpha \log k
\sim \frac{(b^2 + K_0 v) A \alpha}{\alpha+1} n^{\alpha+1} \log n. 
\end{equation}
A consequence of this is that $\max_{k\leq n} \sqrt{T_k} \log^4 T_k \leq \frac{s_{\tau(n)}}{\sqrt{\log s_{\tau(n)}}}$ for $n$ sufficiently large, and therefore using \eqref{tbnform} we have that 
\[
\beta_{\tau(n)}^2 \sim \frac{ \sum_{k=1}^n \Var\left( \tilde{Z}_{T_k} \ind_{|\tilde{Z}_{T_k}| \leq \frac{s_n}{\sqrt{\log s_n}}} \right) }{ \frac{(b^2 + K_0 v) A \alpha}{\alpha+1} n^{\alpha+1} \log n }. 
\]
Since it follows from \eqref{stn2} that $\frac{s_{\tau(n)}}{\sqrt{\log s_{\tau(n)}}} \sim C n^{\frac{\alpha+1}{2}}$ for some $C$, we can then use Corollary \ref{cor-tvarlim} to deduce that
\begin{align*}
 \beta_{\tau(n)}^2 
& \sim \frac{ \sum_{k=1}^n \left\{ b^2 T_k \log T_k + 2K_0 v T_k \log\left( \frac{ n^{(\alpha+1)/2}  \wedge T_k }{\sqrt{T_k}} \right) \right\} }{ \frac{(b^2 + K_0 v) A \alpha}{\alpha+1} n^{\alpha+1} \log n } \\
&\sim \frac{ \sum_{k=1}^n \left\{ b^2 \alpha k^\alpha \log k + K_0 v  k^\alpha \log\left( \frac{ n^{\alpha+1}  \wedge k^{2\alpha} }{k^{\alpha}} \right) \right\} }{ \frac{(b^2 + K_0 v)  \alpha}{\alpha+1} n^{\alpha+1} \log n }
\underset{n\to\infty}{\longrightarrow} 
\begin{cases}
1 & \text{if } \alpha \leq 1 \\
\frac{b^2+\frac{K_0 v}{\alpha}}{b^2 + K_0 v} & \text{if } \alpha > 1. 
\end{cases} 
\end{align*}
(Note that in the second line we are replacing $T^k$ by $k^\alpha$ instead of $A k^\alpha$ inside the logarithm of the second term since the multiplicative constant $A$ inside the logarithm doesn't change the asymptotics.)

Thus, we have justified the formula for the scaling constant $\sigma_\alpha$ in \eqref{polybn}, but only along the subsequence $\tau(n)$. 
To justify the general limiting distribution, we decompose 
\[
\frac{X_n-\E[X_n]}{\sigma_\alpha \sqrt{\Var(X_n)} }
= \frac{X_{\tau(\ell_n)} - \E[X_{\tau(\ell_n)}]}{\sigma_\alpha \sqrt{\Var(X_n)}} 
+ \frac{Z_{n-\tau(\ell_n)}^{(\ell_n+1)} - \E[Z_{n-\tau(\ell_n)}] }{\sigma_\alpha \sqrt{\Var(X_n)}}. 
\]
One can then check that $\Var(X_n) \sim \Var(X_{\tau(\ell_n)})$ and that $\Var(Z_{n-\tau(\ell_n)}) = o( \Var(X_n))$ so that the first term on the right converges to a standard Gaussian and the second term on the right converges to zero in distribution. 
Finally, note that for this example one can use Corollary \ref{k2-meanvar} to show that $\Var(X_n) \sim \frac{(b^2+K_0 v)\alpha}{\alpha+1} n\log n$, so that we can write $\frac{X_n - \E[X_n]}{c_\alpha \sqrt{n\log n}} \underset{n\to\infty}{\Longrightarrow} \Phi$ where $c_\alpha = \sigma_\alpha \sqrt{\frac{(b^2+K_0 v)\alpha}{\alpha+1}}$. However, since \eqref{errorratio} does not hold for polynomial cooling we cannot 
use Corollary \ref{k2-meanvar} to justify replacing the centering term $\E[X_n]$ by $nv$.

\end{example}

\begin{example}[Exponential cooling when $\kappa=2$]\label{ex:k2exp}
 Let $\tau$ be a cooling map with exponentially growing cooling intervals $T_k \sim e^{rk}$ for some $r>0$. 
 For this example the cooling intervals grow fast enough that in the decomposition of the variance 
 $\Var(X_n) = \sum_{k=1}^{\ell_n+1} \Var(Z_{T_{k,n}})$, 
 only the ``large" terms in the sum contribute to the asymptotics of the variance. 
 More precisely, using the notation from Section \ref{sec:kappa2} we have that $\lim_{\delta\to 0}\liminf_{n\to\infty} \frac{s_{n,\delta,+}^2}{s_n^2} = 1$, 
 or equivalently $\lim_{\delta\to 0}\limsup_{n\to\infty} \frac{s_{n,\delta,-}^2}{s_n^2} = 0$. 
 Then it follows from Corollary \ref{cor-step3} that for this example we have $\frac{X_n - \E[X_n]}{\beta \sqrt{\Var(X_n)} } \underset{n\to\infty}{\Longrightarrow} \Phi$. 
 Note that we can simplify the limiting distribution in this example using the asymptotics of the mean and variance from Corollary \ref{k2-meanvar}.  
 Indeed, for exponential cooling it can easily be checked that \eqref{errorratio} holds and also that 
 $\sum_k T_{k,n}\log(T_{k,n}) \sim n\log n$, so that we can conclude that 
 \[
  \frac{X_n - nv}{b\sqrt{n\log n}} \underset{n\to\infty}{\Longrightarrow} \Phi, 
 \]
 for this example. 
\end{example}

While the above two examples show that one cannot restrict the scaling constants to an interval smaller than $[\beta,1]$, these examples are all regular enough so that the scaling constant can be a fixed constant and doesn't need to oscillate with $n$. The following gives an explicit example of a cooling map where one cannot obtain a limiting distribution without letting $\beta_n$ depend on $n$. 
One can give somewhat simpler examples which demonstrate this oscillation of $\beta_n$, but the example below has $\liminf_{n\to\infty}\beta_n = \beta$ and $\limsup_{n\to\infty}\beta_n = 1$.

\begin{example}[Full oscillation of multiplicative scaling constant $\beta_n$] 
For $i\geq 1$ let $m_i = 2^{2^i}$ and $r_i = i m_i = i 2^{2^i}$. Then, let
$\tau$ be the cooling map given by 
\[
T_{r_i} = m_i, \quad i\geq 1, 
\quad\text{and}\quad 
T_k = 1 \quad \text{if } k \notin \{ r_i: i \geq 1 \}. 
\]
For a fixed $t \geq 0$ we will consider the distribution of the RWCRE along the subsequence $n_j=n_j(t) = \tau\left( r_j + \fl{t m_j \log(m_j) } \right)$. 
Note that for $j$ large enough we have $\tau(r_j) \leq n_j(t) < \tau(r_{j+1})$, and thus for $j$ large enough we can decompose $X_{n_j(t)} - \E[X_{n_j(t)}]$ as 
\begin{equation}\label{oscex-dec}
X_{n_j(t)} - \E[X_{n_j(t)}] = 
\sum_{i=1}^j \left( Z_{m_i}^{(r_i)} - \E[Z_{m_i}] \right) 
+ \sum_{\substack{ k\leq r_j + \fl{t m_j \log(m_j)}  \\ k\notin \{r_i: i\geq 1 \} }} (Z_1^{(k)} - \E[Z_1]).
\end{equation}
As $j\to \infty$, the variance of the first term on the right in \eqref{oscex-dec} is asymptotic to 
\[
(b^2 + K_0 v)\sum_{i=1}^j m_i \log(m_i) 
\sim (b^2 + K_0 v) m_j \log(m_j),
\]
while the variance of the second term on the right is 
\[
(r_j + \fl{t m_j \log(m_j)} - j) \Var(Z_1) \sim 
\begin{cases}
 r_j \Var(Z_1) & \text{if } t=0 \\
 t m_j \log(m_j) \Var(Z_1) & \text{if } t>0. 
\end{cases}
\]
Since the second term in \eqref{oscex-dec} is a sum of i.i.d.\ random variables, it converges in distribution to a standard Gaussian when scaled by its standard deviation, while for the first term in \eqref{oscex-dec} we can apply Theorem \ref{thm:RWCRElim-k2} to the cooling map $\tau'$ with increments $T'_{k} = m_k = 2^{2^k}$ to get that this sum converges to a standard Gaussian when scaled by $\beta$ times its standard deviation. 
From this we see that 
\[
\frac{ X_{n_j(t)} - \E[X_{n_j(t)}] }{\alpha_t \sqrt{\Var(X_{n_j(t)})} } \underset{j\to\infty}{\Longrightarrow} \Phi, 
\quad
\text{where } 
\alpha_t^2 
= \frac{b^2 + t\Var(Z_1)}{b^2 + K_0 v + t \Var(Z_1)}.
\]
Finally, note that $\alpha_0 = \beta$ and $\alpha_t \nearrow 1$ as $t\nearrow \infty$. This shows that in applying Theorem \ref{thm:RWCRElim-k2} to the cooling sequence $\tau$ in this example, not only does one need to let the scaling constant $\beta_n$ vary with $n$, but also that the sequence $\beta_n$ will continue to oscillate through the entire interval $[\beta,1]$. 
\end{example}

\appendix
\section{Technical results for the tail estimates}

In this appendix we collect some results for sums of i.i.d.\ random variables that are needed in Section \ref{sec:kappa2}.
Some of these results may be already known, but we include them here for completeness.

\begin{lemma}\label{lem:stable-log-factor}
Assume $\xi_1$ has mean zero, is bounded below by $-L$ for some $L>0$, and has right tail decay $P(\xi_1>x)=O(x^{-2})$. Then, there exists a constant $C>0$ that depends on the distribution of $\xi_1$ such that
\[
    E[e^{-\lambda\xi_1}]\leq e^{C\lambda^2|\log\lambda|}, \quad \text{for all } \lambda \in (0,1/e). 
\]
\end{lemma}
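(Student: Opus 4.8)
The plan is to expand $e^{-\lambda\xi_1}$ to second order and exploit the hypothesis $E[\xi_1]=0$. Since $\xi_1\ge -L$ the quantity $E[e^{-\lambda\xi_1}]$ is finite (it is bounded by $e^{\lambda L}$), so we may write
\[
E[e^{-\lambda\xi_1}] = 1 + E\big[e^{-\lambda\xi_1}-1+\lambda\xi_1\big],
\]
and since $1+x\le e^x$ it suffices to show the remainder term is at most $C\lambda^2|\log\lambda|$. I would split this expectation over the three events $\{\xi_1<0\}$, $\{0\le\xi_1\le t\}$, and $\{\xi_1>t\}$, where $t=t(\lambda)\ge 1$ is a truncation level to be chosen at the end.

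On $\{\xi_1<0\}$ we have $0\le -\lambda\xi_1\le \lambda L\le L$ (as $\lambda<1$), so Taylor's theorem gives $e^{-\lambda\xi_1}-1+\lambda\xi_1\le \frac{1}{2}e^L\lambda^2\xi_1^2\le \frac{1}{2}L^2 e^L\lambda^2$, using $\xi_1^2\le L^2$ on this event. On $\{0\le\xi_1\le t\}$ the elementary inequality $e^{-u}-1+u\le \frac{1}{2}u^2$ for $u\ge0$ (applied with $u=\lambda\xi_1$) gives a contribution of at most $\frac{1}{2}\lambda^2 E[\xi_1^2\ind_{0\le\xi_1\le t}]$; after enlarging the constant in the tail hypothesis we may assume $P(\xi_1>x)\le C'x^{-2}$ for \emph{all} $x\ge1$, and then a layer-cake computation yields $E[\xi_1^2\ind_{0\le\xi_1\le t}]=\int_0^t 2x\,P(\xi_1>x)\,\ud x\le 1+2C'\log t$. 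On $\{\xi_1>t\}$ we simply use $e^{-\lambda\xi_1}-1\le 0$, so the integrand is at most $\lambda\xi_1$, and $E[\xi_1\ind_{\xi_1>t}]=tP(\xi_1>t)+\int_t^\infty P(\xi_1>x)\,\ud x\le 2C't^{-1}$.

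Combining the three bounds,
\[
E\big[e^{-\lambda\xi_1}-1+\lambda\xi_1\big]\le \frac{1}{2}L^2 e^L\lambda^2 + \frac{1}{2}\lambda^2\big(1+2C'\log t\big) + 2C'\lambda^2 t^{-1}.
\]
Choosing $t=1/\lambda$ — which is $\ge e>1$ for $\lambda\in(0,1/e)$ — turns the middle term into $\frac{1}{2}\lambda^2+C'\lambda^2|\log\lambda|$ and the last term into $2C'\lambda^2$; since $|\log\lambda|>1$ on the relevant range, every pure $\lambda^2$ term is bounded by a constant multiple of $\lambda^2|\log\lambda|$, and the claimed bound follows with a constant $C$ depending only on $L$ and the tail constant. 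The computation is routine; the only point requiring care is that $E[\xi_1^2]$ may be infinite (the right tail sits exactly at the critical rate), which is why the truncation at level $t$ of order $1/\lambda$ is needed, and it is precisely this truncation that produces the $|\log\lambda|$ factor in the exponent.
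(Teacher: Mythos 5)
Your proof is correct and takes essentially the same route as the paper's: a second-order expansion of the Laplace transform with truncation at scale $1/\lambda$, where the $x^{-2}$ tail integrated over $[1,1/\lambda]$ produces the $|\log\lambda|$ factor (the paper merely shifts to $\hat{\xi}_1=\xi_1+L$ and bounds $E[e^{-\lambda\hat{\xi}_1}-1+\lambda\hat{\xi}_1]$ through the tail integral with $1-e^{-\lambda x}\le\min\{\lambda x,1\}$, splitting at the same two scales). Only a typo to fix: the last term in your displayed bound should be $2C'\lambda t^{-1}$ rather than $2C'\lambda^2 t^{-1}$, which with $t=1/\lambda$ indeed gives the $2C'\lambda^2$ you use in the conclusion.
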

\begin{proof}
Define $\hat{\xi}_1=\xi_1+L$ so that $\hat{\xi}_1$ is non-negative and $E[\hat{\xi}_1]=L$. 
We begin by noting that 
\begin{align*}
 e^{-\lambda L}E[e^{-\lambda\xi_1}]
    =E[e^{-\lambda\hat{\xi}_1}]
    &= 1 - \lambda L + E[e^{-\lambda\hat{\xi}_1}-1+\lambda \hat{\xi}_1]\\
    &= 1 - \lambda L + \lambda \int_0^\infty (1-e^{-\lambda x}) P( \hat{\xi}_1 > x ) \, \ud x \\
    &\leq 1 - \lambda L + \lambda \int_0^\infty \min\{ \lambda x, 1\} P( \hat{\xi}_1 > x ) \, \ud x. 
\end{align*}
Since $\lambda <1$,
bounding the probability inside the integral by $1$ when $x<1$ and by $Kx^{-2}$ when $x\geq 1$ we obtain 
\begin{align*}
 e^{-\lambda L}E[e^{-\lambda\xi_1}]
&\leq 1-\lambda L+\lambda^2\int_0^1 x \, \ud x +K\lambda^2\int_1^{\lambda^{-1}} x^{-1} \, \ud x +K\lambda \int_{\lambda^{-1}}^\infty x^{-2} \, \ud x\\
    &= 1-\lambda L+\frac{1}{2}\lambda^2+K\lambda^2|\log \lambda|+K\lambda^2\\
    &\leq \exp\left\{-\lambda L+\left(2K+\frac{1}{2}\right)\lambda^2|\log\lambda|\right\}, 
\end{align*}
where in the last inequality we also used that $\lambda \in (0,1/e)$ implies that $|\log \lambda| > 1$. 
Finally, the proof is completed by multiplying both sides of the above inequality by $e^{\lambda L}$. 
\end{proof}

\begin{corollary}\label{cor:lthtsum}
Let $\xi_1,\xi_2,\ldots$ be i.i.d.\ random variables which are bounded below and have right tail decay $P(\xi_1 > x) = O(x^{-2})$. Then, for any $a>0$ there exist constants $c,C'>0$ (depending on $a$) such that 
\[
 P\left( \sum_{i=1}^n \xi_i \leq - x \right) \leq C' e^{-c \frac{x^2}{n\log n}}, 
 \qquad \forall x \in (0,an].  
\]
\end{corollary}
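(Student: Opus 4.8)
The plan is to combine a Chernoff bound with the exponential-moment estimate of Lemma~\ref{lem:stable-log-factor}, after centering. First I would note that, since $\xi_1$ is bounded below and satisfies $P(\xi_1 > x) = O(x^{-2})$, both $E[\xi_1^-]$ and $E[\xi_1^+] = \int_0^\infty P(\xi_1 > x)\,\ud x$ are finite, so $\mu := E[\xi_1]$ exists. Replacing $\xi_i$ by $\xi_i - \mu$ preserves the hypotheses (bounded below, right tail $O(x^{-2})$), and since $\mu \ge 0$ in all the situations where the corollary is invoked, $P(\sum_{i=1}^n \xi_i \le -x) \le P(\sum_{i=1}^n (\xi_i - \mu) \le -x)$; thus it suffices to treat the case $E[\xi_1] = 0$, in which Lemma~\ref{lem:stable-log-factor} provides $E[e^{-\lambda \xi_1}] \le e^{C\lambda^2 |\log\lambda|}$ for all $\lambda \in (0, 1/e)$, with $C > 0$ depending only on the law of $\xi_1$.

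Fix $a > 0$ and write $S_n = \sum_{i=1}^n \xi_i$. For $x \le \sqrt{n\log n}$ the conclusion is immediate, since then $x^2/(n\log n) \le 1$, so $e^{-c x^2/(n\log n)} \ge e^{-1}$ whenever $c \le 1$, and the claim holds with any $C' \ge e$ because $P(S_n \le -x) \le 1$. For $\sqrt{n\log n} \le x \le an$ I would apply the Chernoff bound together with Lemma~\ref{lem:stable-log-factor}: for any $\lambda \in (0, 1/e)$,
\[
P(S_n \le -x) \le e^{-\lambda x}\, \left(E[e^{-\lambda \xi_1}]\right)^n \le \exp\left\{ -\lambda x + C n \lambda^2 |\log\lambda| \right\},
\]
and then make the choice $\lambda = \lambda(n,x) := \frac{x}{2Cn\log n}$. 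Since $x \le an$ this lies in $(0, 1/e)$ once $n$ is large enough (depending on $a$ and $C$), and since $x \ge \sqrt{n\log n}$ one has $\lambda \ge \frac{1}{2C\sqrt{n\log n}}$, hence $|\log\lambda| \le \log n$ for $n$ large. Substituting this choice of $\lambda$ gives
\[
-\lambda x + Cn\lambda^2|\log\lambda| \le -\frac{x^2}{2Cn\log n} + \frac{x^2 |\log\lambda|}{4Cn\log^2 n} \le -\frac{x^2}{2Cn\log n} + \frac{x^2}{4Cn\log n} = -\frac{x^2}{4Cn\log n},
\]
so that $P(S_n \le -x) \le \exp\{ -x^2/(4Cn\log n) \}$ for all $n$ sufficiently large and all $x \in [\sqrt{n\log n}, an]$. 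Combining the two ranges, the corollary follows with $c = \frac{1}{4C}$ and $C'$ taken at least $e$, enlarging $C'$ if needed to absorb the finitely many small values of $n$ (the estimate being needed only for $n$ large, as in every application in the paper).

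The crux is the calibration of $\lambda$. The $x^{-2}$ right tail is precisely heavy enough that the exponential moment carries the extra factor $|\log\lambda|$ (this is the content of Lemma~\ref{lem:stable-log-factor}), so the Cram\'er-type exponent $-\lambda x + C n \lambda^2 |\log\lambda|$ is, up to constants, optimised at $\lambda$ of order $x/(n\log n)$, which is exactly what produces a Gaussian-type bound with $n\log n$ in the denominator rather than the $n$ one would get for finite-variance summands. The only point requiring care is that the clean bound $|\log\lambda| \le \log n$ holds only when $x$ is at least of order $\sqrt{n\log n}$; but on the complementary range $e^{-c x^2/(n\log n)}$ is bounded below by a positive constant, so there the bound is trivial. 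I do not expect any other step to cause difficulty.
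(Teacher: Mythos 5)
Your proof is correct and follows essentially the same route as the paper: an exponential Chebyshev (Chernoff) bound combined with the moment-generating-function estimate of Lemma \ref{lem:stable-log-factor}. The only differences are cosmetic — you take the explicit choice $\lambda = \frac{x}{2Cn\log n}$ and dispose of the range $x\leq \sqrt{n\log n}$ trivially, whereas the paper calibrates $\lambda$ implicitly through $\lambda|\log\lambda| = \frac{x}{2Cn}$, bounds $|\log\lambda|\leq 2\log(\frac{2Cn}{x})$, and then extends from $x\leq \frac{2C}{e}n$ to $x\leq an$ by monotonicity in $x$; your centering reduction also correctly supplies the mean-zero hypothesis that Lemma \ref{lem:stable-log-factor} requires (and which is implicit in the paper's applications, where the summands are already centered).
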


\begin{proof}
First of all, for any $\lambda \in (0,1/e)$ we have from Lemma \ref{lem:stable-log-factor} that 
\[
P\left( \sum_{i=1}^n \xi_i \leq - x \right) \leq e^{-\lambda x} \left( E\left[ e^{-\lambda \xi_1}\right] \right)^n \leq \exp\left\{ -\lambda x + Cn  \lambda^2 |\log \lambda| \right\}. 
\]
It is not  easy to find a value of $\lambda$ that minimizes the upper bound on the right, but we can achieve a nearly optimal upper bound by choosing $\lambda$ such that $\lambda |\log \lambda| = \frac{x}{2Cn}.$
Since the function $\lambda \mapsto \lambda |\log \lambda|$ achieves its maximum of $1/e$ at $\lambda = 1/e$, we can find such a $\lambda \in (0,1/e)$ only if $x \leq \frac{2C}{e} n$. Thus, for now we will restrict ourselves to $x < \frac{2C}{e} n$ and will extend our bound to $x \leq an$ later.  
With this choice of $\lambda$ we then have 
\begin{equation}
\label{Snltbound1}
P\left( \sum_{i=1}^n \xi_i \leq - x \right) 
\leq \exp\left\{ \frac{-\lambda|\log\lambda| x + C n \lambda^2 |\log \lambda|^2}{|\log \lambda|} \right\}
= \exp\left\{ \frac{-x^2}{4Cn |\log \lambda|} \right\}.  
\end{equation}
Next, we claim that our choice of $\lambda$ above implies that 
\begin{equation}\label{loglub}
 |\log \lambda| \leq 2 \log\left( \frac{2Cn}{x} \right) . 
\end{equation}
To see this, note that $t^2 |\log(t^2)| = 2t (t |\log t|) < \frac{2}{e}t < t$ for all $t \in (0,1/e)$, and applying this with $t = \frac{x}{2Cn} < \frac{1}{e}$ yields that 
$\left( \frac{x}{2Cn} \right)^2 \left| \log\left( \frac{x}{2Cn} \right)^2 \right| < \frac{x}{2Cn} = \lambda |\log \lambda|$. The monotonicity of $\lambda |\log \lambda|$ on $(0,1/e)$ then implies that $1/e > \lambda > \left( \frac{x}{2Cn} \right)^2 $ which in turn implies the claim in \eqref{loglub}. 
Combining \eqref{Snltbound1} and \eqref{loglub} we get 
\[
P\left( \sum_{i=1}^n \xi_i \leq - x \right) 
\leq  \exp\left\{ \frac{-x^2}{4Cn \log\left( \frac{2Cn}{x} \right)  } \right\} \leq
 \exp\left\{ \frac{-x^2}{4Cn \log n } \right\}, \quad \text{for all } 2C \leq x \leq \frac{2C}{e}n, 
\]
and then by choosing $C'$ large enough we get $P\left( \sum_{i=1}^n \xi_i \leq - x \right) \leq C' \exp\left\{ \frac{-x^2}{8Cn \log n } \right\}$ for all $x \leq \frac{2C}{e} n$. 
Finally, we can extend this bound to $x\leq an$ by changing the coefficient in the exponent. Indeed, if $\frac{2C}{e}n < x \leq an$ then 
\begin{align*}
P\left( \sum_{i=1}^n \xi_i \leq - x \right) 
&\leq P\left( \sum_{i=1}^n \xi_i \leq - \frac{2C}{e} n \right) \\  
&\leq C' \exp\left\{  \frac{-\left(\frac{2Cn}{e}\right)^2}{4Cn \log n } \right\}
\leq C' \exp\left\{ -\frac{C}{e^2a^2} \frac{x^2}{n\log n} \right\}. 
\end{align*}
This completes the proof of the corollary. 
\end{proof}

\begin{lemma}\label{Snltail}
Assume that $\{\xi_i\}_{i\geq 1}$ are i.i.d., zero mean random variables such that $P(|\xi_i| > x) = \mathcal{O}(x^{-2})$, and let $S_n = \sum_{i=1}^n \xi_i$.  
Then, there exists a constant $C>0$ such that 
\[
P(|S_n| > t \sqrt{n \log n}) \leq \frac{C}{t^2 \log n} + \frac{C}{t^4} \qquad \forall t \leq \sqrt{\frac{n}{\log n}}. 
\]
\end{lemma}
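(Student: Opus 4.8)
The plan is to split the sum according to whether the individual summands are ``large'' or not, using a truncation level that balances the two error terms $t^{-2}(\log n)^{-1}$ and $t^{-4}$. Concretely, fix a truncation level $u = u(n,t) := t\sqrt{n/\log n}$, and write $\xi_i = \xi_i' + \xi_i''$, where $\xi_i' = \xi_i \ind_{|\xi_i| \le u}$ and $\xi_i'' = \xi_i \ind_{|\xi_i| > u}$. Let $S_n' = \sum_{i=1}^n \xi_i'$ and $S_n'' = \sum_{i=1}^n \xi_i''$, so that
\[
 P(|S_n| > t\sqrt{n\log n}) \le P\big(|S_n' - E[S_n']| > \tfrac{t}{3}\sqrt{n\log n}\big) + P\big(|S_n''| > \tfrac{t}{3}\sqrt{n\log n}\big) + \ind_{ |E[S_n']| > \frac{t}{3}\sqrt{n\log n} },
\]
and the three pieces are handled separately.

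\textbf{The truncated tail $S_n''$.} Since $P(|\xi_1| > x) = \mathcal{O}(x^{-2})$, we have $P(\xi_i \ne \xi_i') = P(|\xi_1| > u) \le C/u^2$, so by a union bound $P(S_n'' \ne 0) \le n C/u^2 = C (\log n)/t^2$. This is only $O(t^{-2})$ rather than $O(t^{-2}(\log n)^{-1})$, which is too weak for this term alone — so instead I would keep the event that \emph{at least two} of the $\xi_i''$ are nonzero, whose probability is $O\big((n/u^2)^2\big) = O(t^{-4})$, together with the event that exactly one $\xi_i''$ is nonzero but exceeds $\tfrac{t}{3}\sqrt{n\log n}$ in absolute value. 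On the latter event, $|\xi_i''| = |\xi_i| > \frac{t}{3}\sqrt{n\log n}$, which has probability $\le n \cdot C/(\frac{t}{3}\sqrt{n\log n})^2 = C/(t^2\log n)$. This gives $P(|S_n''| > \frac{t}{3}\sqrt{n\log n}) \le \frac{C}{t^2\log n} + \frac{C}{t^4}$, the exact shape we want.

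\textbf{The mean correction and the truncated sum $S_n'$.} For the mean, $|E[\xi_1']| = |E[\xi_1 \ind_{|\xi_1| \le u}]| = |E[\xi_1 \ind_{|\xi_1| > u}]| \le \int_u^\infty P(|\xi_1| > x)\,dx + u\,P(|\xi_1|>u) \le C/u$, using $E[\xi_1]=0$ and the tail bound; hence $|E[S_n']| \le Cn/u = C\sqrt{n\log n}/t \cdot (1/\sqrt{\cdot})$... more carefully, $n/u = \frac{1}{t}\sqrt{n\log n}$, so $|E[S_n']| \le \frac{C}{t}\sqrt{n\log n}$, which is \emph{not} automatically $\le \frac{t}{3}\sqrt{n\log n}$ when $t$ is small; this forces me to use the bound only for $t \ge \sqrt{3C}$ (a constant), absorbing small $t$ into a trivial bound since $\frac{C}{t^2\log n}$ exceeds $1$ there after enlarging $C$. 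For $S_n'$ itself, the variance is $\mathrm{Var}(\xi_1') \le E[(\xi_1')^2] = \int_0^{u^2} P(\xi_1^2 > s)\,ds \le \int_0^{u^2} \min\{1, C/s\}\,ds \le C(1 + \log(u^2)) \le C\log n$ (using $u \le n$), so by Chebyshev $P(|S_n' - E[S_n']| > \frac{t}{3}\sqrt{n\log n}) \le \frac{9 n \mathrm{Var}(\xi_1')}{t^2 n\log n} \le \frac{C}{t^2}$ — again too weak by a $\log n$ factor. To recover the $\log n$, I would instead bound $\mathrm{Var}(\xi_1')$ more sharply: the $\log$ growth in $E[(\xi_1')^2]$ comes from the range $s \in [1, u^2]$, and $\log(u^2) = \log(t^2 n/\log n) \le \log n$, giving exactly $\mathrm{Var}(\xi_1') \le C\log u \le C\log n$ — so Chebyshev actually gives $P(\cdot) \le \frac{C n \log n}{t^2 n \log n} = \frac{C}{t^2}$, still short. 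The fix is to truncate at a \emph{lower} level, $u = \sqrt{n}$ say, so that $\mathrm{Var}(\xi_1') \le C\log n$ but $n/u^2 = 1$, which ruins the $S_n''$ bound. I expect the correct choice, and the main obstacle, is to truncate at $u = \sqrt{n\log n}/t^{1/2}$ or to run a two-scale truncation (truncate at $\sqrt{n}$ for the variance-controlled part and separately control the moderately large summands in $[\sqrt n, t\sqrt{n/\log n}]$ by a first-moment bound), and to check that the moderately-large-summand contribution is $O(n/u) = O(t^{-1}\sqrt{n\log n})$ in $L^1$, i.e. negligible at scale $t\sqrt{n\log n}$. Once the truncation level is correctly tuned, all three pieces are routine; balancing them to land exactly on $\frac{C}{t^2\log n} + \frac{C}{t^4}$ is the delicate step, and I'd expect the proof to choose the truncation at height comparable to $\sqrt{n}$ for the Chebyshev part while the $t^{-4}$ term arises from requiring two jumps above $\sqrt{n}$ and the $t^{-2}(\log n)^{-1}$ term from one jump above $t\sqrt{n/\log n}$.
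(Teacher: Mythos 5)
There is a genuine gap here, and you flag it yourself: with a second-moment (Chebyshev) bound on the truncated sum you can never do better than $C/t^2$, because $\mathrm{Var}\big(\xi_1\ind_{|\xi_1|\le u}\big)\asymp \log u$, which is of order $\log n$ for every truncation level $u$ between $\sqrt{n}$ and $n$; and any truncation low enough to make this variance $o(\log n)$ (i.e.\ $u=n^{o(1)}$) makes the discarded part $\sum_i \xi_i\ind_{|\xi_i|>u}$ have expected size of order $n/u\gg t\sqrt{n\log n}$, so the large-jump piece can then no longer be controlled. Your proposed fixes therefore do not close the argument: at level $u=t\sqrt{n/\log n}$ the probability of two exceedances is $\big(Cn/u^2\big)^2=C(\log n)^2/t^4$, not $O(t^{-4})$; at level $u=\sqrt{n}$ the two-exceedance probability is only $O(1)$; and no single- or two-scale truncation rebalances the Chebyshev bulk term to produce the missing factor of $\log n$.

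The idea you are missing — and what the paper does — is to truncate at the target scale $u=t\sqrt{n\log n}$ itself, so that the union bound for a single untruncated exceedance already gives $nP(|\xi_1|>u)=O\big(\tfrac{1}{t^2\log n}\big)$, and then to bound the truncated, centered sum by Markov's inequality with the \emph{fourth} moment rather than the second. Writing $\bar\xi_i$ for the centered truncated variables, a Rosenthal-type expansion gives $E\big[\big(\sum_i\bar\xi_i\big)^4\big]\le C\big(n\,E[\xi_1^4\ind_{|\xi_1|\le u}]+n^2E[\xi_1^2\ind_{|\xi_1|\le u}]^2\big)\le C\big(nu^2+n^2\log^2 u\big)$; dividing by $(t\sqrt{n\log n})^4=t^4n^2\log^2 n$ and using $u=t\sqrt{n\log n}\le n$ (this is exactly where the restriction $t\le\sqrt{n/\log n}$ enters) yields $\tfrac{C}{t^2\log n}+\tfrac{C}{t^4}$. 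So the $t^{-4}$ term arises from the square of the truncated variance inside the fourth moment, not from a two-jump event, and the $t^{-2}(\log n)^{-1}$ term comes from the single-exceedance union bound together with the $nE[\xi_1^4\ind_{|\xi_1|\le u}]$ term. Your treatment of the mean correction (it is $O(n/u)$, absorbed once $t$ is bounded below by a constant multiple of $1/\sqrt{\log n}$, with small $t$ trivial after enlarging $C$) is fine and matches the paper.
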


\begin{proof}
First of all, note that by choosing $C>0$ large enough we can assume that $t> 1/\sqrt{\log n}$. 
Next, note that 
\begin{align*}
P(|S_n| > t\sqrt{n\log n}) 
&\leq n P\left( |\xi_1| > t \sqrt{n\log n} \right) + P\left( \left| \sum_{i=1}^n \xi_i \ind_{|\xi_i| \leq t \sqrt{n\log n}} \right| > t \sqrt{n\log n} \right). 
\end{align*}
Since the assumption on the tail of $|\xi_1|$ implies that the first term on the right is $\mathcal{O}\left( \frac{1}{t^2 \log n} \right)$, it remains only to bound the second probability for $1/\sqrt{\log n} \leq t\leq \sqrt{n/\log n}$. To this end, first note that 
\[
\left| E\left[ \xi_i \ind_{|\xi_i| \leq x} \right] \right| = 
\left| E\left[ \xi_i \ind_{|\xi_i| > x} \right] \right| 
\leq  E\left[ |\xi_i| \ind_{|\xi_i| > x} \right] = \mathcal{O}( x^{-1} ), 
\]
where the first equality is because $E[\xi_i]=0$ and the last equality follows from the assumptions on the tails of $\xi_i$. Therefore, there exists a constant $L>0$ such that 
\[
\left| E\left[ \sum_{i=1}^n \xi_i \ind_{|\xi_i| \leq t \sqrt{n\log n}} \right] \right|
\leq n \left| E\left[ \xi_i \ind_{|\xi_i| \leq t \sqrt{n\log n}} \right] \right| 
\leq \frac{t}{2}\sqrt{n\log n}
\qquad \text{for all } t > L/\sqrt{\log n}. 
\]
Letting $\bar{\xi}_{i,t,n} = \xi_i \ind_{|\xi_i|\leq t \sqrt{n\log n}} - E\left[\xi_i \ind_{|\xi_i|\leq t \sqrt{n\log n}}\right]$, 
we can conclude that for $t>L/\sqrt{\log n}$ we have 
\begin{align*}
& P\left( \left| \sum_{i=1}^n \xi_i \ind_{|\xi_i| \leq t \sqrt{n\log n}} \right| > t \sqrt{n\log n} \right) 
\leq P\left( \left| \sum_{i=1}^n \bar{\xi}_{i,t,n} \right| > \frac{t\sqrt{n\log n}}{2} \right) \\
&\leq \frac{4}{t^4 n^2 (\log n)^2} E\left[ \left( \sum_{i=1}^n \bar{\xi}_{i,t,n} \right)^4 \right] \\
&\leq \frac{C}{t^4 n^2(\log n)^2} \left\{ n E\left[ \xi_i^4 \ind_{|\xi_i| \leq t\sqrt{n\log n}} \right] + n^2 E\left[ \xi_i^2 \ind_{|\xi_i| \leq t\sqrt{n\log n}} \right]^2  \right\}.
\end{align*}
Since the tail decay assumptions imply that 
$ E\left[ \xi_i^4 \ind_{|\xi_i| \leq x} \right]  = \mathcal{O}( x^2 )$
and 
$E\left[ \xi_i^2 \ind_{|\xi_i| \leq x} \right] = \mathcal{O}( \log x ) $, we can conclude that for $n$ large enough and $t \in [1/\sqrt{\log n}, \sqrt{n/\log n}]$ we have 
\begin{align*}
  P\left( \left| \sum_{i=1}^n \xi_i \ind_{|\xi_i| \leq t \sqrt{n\log n}} \right| > t \sqrt{n\log n} \right) 
  &\leq  \frac{C}{t^2\log n} + \frac{C}{t^4 (\log n)^2} E\left[ \xi_i^2 \ind_{|\xi_i| \leq n} \right]^2 \\
   &\leq  \frac{C}{t^2\log n} + \frac{C}{t^4}.
\end{align*}
\end{proof}

\section{Precise truncated variance asymptotics}\label{app:tvar}

\begin{proof}[Proof of Corollary \ref{cor-tvarlim}]
 First of all, we claim that it is enough to prove the claimed asymptotics for the truncated second moment instead of the truncated variance. That is, we claim it is enough to prove 
 \begin{equation}\label{tsm-prec}
 \lim_{n\to\infty} \sup_{x\geq \sqrt{n}\log^4 n} \left| \frac{\E\left[ (\tilde{Z}_n)^2 \ind_{|\tilde{Z}_n| \leq x}  \right] }{b^2 n\log n + 2K_0 v n \log\left( \frac{x \wedge (nv/2)}{\sqrt{n}} \right) }  - 1 \right| = 0. 
 \end{equation}
To see that it is enough to prove \eqref{tsm-prec}, note that Lemma \ref{cor-L1tail} implies for $n$ sufficiently large and $x\geq \sqrt{n}\log^4 n$ that
\begin{align*}
 \left| \Var(\tilde{Z}_n \ind_{|\tilde{Z}_n| \leq x} ) - \E\left[ (\tilde{Z}_n)^2 \ind_{|\tilde{Z}_n| \leq x}  \right] \right|
 = \E\left[ \tilde{Z}_n \ind_{|\tilde{Z}_n| \leq x}  \right]^2
 = \E\left[ \tilde{Z}_n \ind_{|\tilde{Z}_n| > x}  \right]^2 
 \leq \frac{C n^2}{x^2}. 
\end{align*}
Using this, the statement of the lemma follows easily from \eqref{tsm-prec}. 

The advantage of \eqref{tsm-prec} rather than the original statement in the Lemma is that the truncated second moment $\E\left[ (\tilde{Z}_n)^2 \ind_{|\tilde{Z}_n| \leq x}  \right]$ is monotone in $x$ whereas the truncated variance is not. 
In particular, since $\E\left[ (\tilde{Z}_n)^2 \ind_{|\tilde{Z}_n| \leq x}  \right] \leq \Var(Z_n) \sim (b^2 + K_0 v)n\log n$ then we need only to get good upper bounds on the truncated second moment when $x \in [\sqrt{n}\log^4 n, nv/2]$. 
For such $x$ we then have from Lemma \ref{cor-L2trunc} for $M$ fixed and $n$ sufficiently large that
\begin{align}
\E\left[ (\tilde{Z}_n)^2 \ind_{|\tilde{Z}_n| \leq x}  \right]
&\leq \E\left[ (\tilde{Z}_n)^2 \ind_{|\tilde{Z}_n| \leq M\sqrt{n\log n}}  \right] + \E\left[ (\tilde{Z}_n)^2 \ind_{M\sqrt{n\log n} < |\tilde{Z}_n| < \sqrt{n}\log^4 n }  \right] \label{tsm-ub1}  \\
&\qquad + \E\left[ (\tilde{Z}_n)^2 \ind_{\sqrt{n}\log^4 n \leq |\tilde{Z}_n| \leq x }  \right] \label{tsm-ub2}.
\end{align}
By Lemma \ref{cor-L2trunc} we can bound the second expectation in the last line by $C n \log\log n + \frac{Cn\log n}{M^2}$, while the Bounded Convergence Theorem implies the first expectation is asymptotic to $ b^2 E[\Phi^2\ind_{|\Phi|\leq M}] n\log n$ as $n\to\infty$. Thus, by first choosing $M$ large and then $n$ large enough we get that for any $\e>0$ there exists an $n_0 = n_0(\e)$ such that the two terms on the right in \eqref{tsm-ub1} can be bounded above by $(1+2\e)b^2 n\log n$ for all $n\geq n_0$. For the last term in \eqref{tsm-ub2}, it follows from \eqref{rtail-cen} and \eqref{cprecise ltail} that for any $\e>0$ there is an $n_1 = n_1(\e)$ such that $\P(|\tilde{Z}_n| > t) \leq \frac{(1+\e)K_0 v n}{t^2}$ for all $t \in [\sqrt{n}\log^4 n, nv/2]$. 
Applying this bound we get for all $n$ large enough (depending on $\e$) that
\begin{align*}
\E\left[ (\tilde{Z}_n)^2 \ind_{\sqrt{n}\log^4 n \leq |\tilde{Z}_n| \leq x }  \right]
&\leq n \log^8 n \P\left( |\tilde{Z}_n| > \sqrt{n}\log^4 n \right) + \int_{\sqrt{n}\log^4 n}^{x} 2t \P(|\tilde{Z}_n| > t) \, \ud t \\
&\leq (1+\e)K_0 v n + 2(1+\e) K_0 v n \log\left( \frac{x}{\sqrt{n}\log^4 n} \right)  \\
&\leq (1+2\e)2 K_0 v n \log\left( \frac{x}{\sqrt{n}} \right). 
\end{align*}
Since $\e>0$ is arbitrary this completes the proof of the needed upper bound for $\E[(\tilde{Z}_n)^2 \ind_{|\tilde{Z}_n|\leq x} ]$.

For the lower bound on $\E[(\tilde{Z}_n)^2 \ind_{|\tilde{Z}_n|\leq x} ]$ we note that this truncated second moment is non-decreasing in $x$ and so it's enough to only give the necessary lower bounds for $x \in [\sqrt{n}\log^4 n, vn/2]$.
Using Lemma \ref{cor-smalltsm} we can bound the truncated second moment below by $\E[(\tilde{Z}_n)^2 \ind_{|\tilde{Z}_n|\leq \sqrt{n}\log^4 n} ] \geq (1-2\e)b^2 n\log n$ for any $\epsilon>0$ and $n$ sufficiently large.
This is a good enough lower bound for $x \in [\sqrt{n}\log^4 n, \sqrt{n}e^{\sqrt{\log n}} ]$, but it
remains to get a good lower bound for $x \in [\sqrt{n}e^{\sqrt{\log n}}, vn/2]$. 
For such $x$ we can begin
by noting that Lemma \ref{cor-smalltsm} implies that for any $\e>0$ and $n$ sufficiently large we have
\[
\E\left[ (\tilde{Z}_n)^2 \ind_{|\tilde{Z}_n| \leq x}  \right]
\geq (1-2\e)b^2 n\log n + \E\left[ (\tilde{Z}_n)^2 \ind_{\sqrt{n}\log^4 n \leq |\tilde{Z}_n| \leq x }  \right].
\]
For the second term on the right, it follows from \eqref{cprecise ltail} that $\P(|\tilde{Z}_n| \geq t) \geq \P(\tilde{Z}_n \leq -t) \geq (1-\e)K_0 (nv-t)t^{-2}$ for all $t \in [\sqrt{n}\log^4 n, nv/2]$ and $n$ sufficiently large. 
Therefore, for $n$ sufficiently large we have 
\begin{align*}
 \E\left[ (\tilde{Z}_n)^2 \ind_{\sqrt{n}\log^4 n \leq |\tilde{Z}_n| \leq x }  \right]
 &\geq \int_{\sqrt{n}\log^4 n}^x 2t \P(|\tilde{Z}_n| > t ) \, \ud t - x^2 \P(|\tilde{Z}_n| > x) \\
 &\geq \int_{\sqrt{n}\log^4 n}^x 2 (1-\e)K_0 (nv-t)t^{-1} \, \ud t - (1+\e)K_0 nv \\
 &\geq 2(1-2\e)K_0 v n \log\left( \frac{x}{\sqrt{n}} \right), 
\end{align*}
where in the last inequality we used that $x \geq \sqrt{n}e^{\sqrt{\log n}}$. 
\end{proof}

\providecommand{\bysame}{\leavevmode\hbox to3em{\hrulefill}\thinspace}
\providecommand{\MR}{\relax\ifhmode\unskip\space\fi MR }
\providecommand{\MRhref}[2]{%
  \href{http://www.ams.org/mathscinet-getitem?mr=#1}{#2}
}
\providecommand{\href}[2]{#2}





\begin{thebibliography}{10}

\bibitem{AP16}
Sung~Won Ahn and Jonathon Peterson, \emph{Oscillations of quenched slowdown
  asymptotics for ballistic one-dimensional random walk in a random
  environment}, Electron. J. Probab. \textbf{21} (2016), Paper No. 16, 27.
  \MR{3485358}

\bibitem{Ash00}
R.B. Ash and C.A. Doleans-Dade, \emph{Probability and measure theory}, Elsevier
  Science, 2000.
 
\bibitem{ABF18}
L. Avena, O. Blondel, and A. Faggionato,
\emph{Analysis of random walks in dynamic random environments via {$L^2$}-perturbations},
Stochastic Process. Appl.
\textbf{128} (2018), no. 10, 3490--3530.  \MR{3849817}

  
\bibitem{ACdCdH20}
Luca Avena, Yuki Chino, Conrado da~Costa, and Frank den Hollander.
\newblock Random walk in cooling random environment: recurrence versus
  transience and mixed fluctuations.
\newblock {\em Ann. Inst. Henri Poincar\'e{} Probab. Stat.}, 58(2):967--1009,
  2022.  

\bibitem{ACP21}
Luca {Avena}, Conrado {da Costa}, and Jonathon {Peterson}, \emph{{Gaussian,
  stable, tempered stable and mixed limit laws for random walks in cooling
  random environments}}, arXiv e-prints (2021), arXiv:2108.08396.
  \newblock To appear in {\em Ann. Inst. Henri Poincar\'e{} Probab. Stat.}
  
 \bibitem{AveHol19}
Luca Avena and Frank~den Hollander, \emph{Random walks in cooling random
  environments}, Sojourns in Probability Theory and Statistical Physics - III
  (Singapore) (Vladas Sidoravicius, ed.), Springer Singapore, 2019, pp.~23--42.

\bibitem{BZ06}
Antar Bandyopadhyay and Ofer Zeitouni,
\emph{Random walk in dynamic {M}arkovian random environment},
ALEA Lat. Am. J. Probab. Math. Stat. \textbf{1} (2006), pp.~205--224. \MR{2249655}

\bibitem{BMP00}
  Carlo Boldrighini, Robert Minlos, and Alessandro Pellegrinotti,
\emph{Almost-sure central limit theorem for a Markov model of random
  walk in dynamical random environment.}  Probab Theory Relat Fields
\textbf{109}, (1997), 245–273. 
  
\bibitem{BD18}
Dariusz Buraczewski and Piotr Dyszewski, \emph{Precise large deviations for
  random walk in random environment}, Electron. J. Probab. \textbf{23} (2018),
  Paper no. 114, 26. \MR{3885547}

  

\bibitem{CGZ00}
Francis Comets, Nina Gantert, and Ofer Zeitouni, \emph{Quenched, annealed and
  functional large deviations for one-dimensional random walk in random
  environment}, Probability Theory and Related Fields \textbf{118} (2000),
  no.~1, 65--114.

\bibitem{DPZ96}
Amir Dembo, Yuval Peres, and Ofer Zeitouni, \emph{Tail estimates for
  one-dimensional random walk in random environment}, Comm. Math. Phys.
  \textbf{181} (1996), no.~3, 667--683. \MR{1414305}

\bibitem{Durret}
Rick Durrett, \emph{Probability---theory and examples}, Cambridge Series in
  Statistical and Probabilistic Mathematics, vol.~49, Cambridge University
  Press, Cambridge, 2019, Fifth edition of [ MR1068527]. \MR{3930614}

\bibitem{Fel71}
William Feller, \emph{An introduction to probability theory and its
  applications. {V}ol. {II}.}, Second edition, John Wiley \& Sons Inc., New
  York, 1971.

\bibitem{FGP10}
Alexander Fribergh, Nina Gantert, and Serguei Popov, \emph{On slowdown and
  speedup of transient random walks in random environment}, Probab. Theory
  Related Fields \textbf{147} (2010), no.~1-2, 43--88. \MR{2594347}

\bibitem{GZ98}
Nina Gantert and Ofer Zeitouni, \emph{Quenched sub-exponential tail estimates
  for one-dimensional random walk in random environment}, Comm. Math. Phys.
  \textbf{194} (1998), no.~1, 177--190. \MR{1628294}

\bibitem{HdHdSST15}
Marcelo Hil\'ario, Frank den Hollander, Renato Soares dos Santos,
             Vladas Sidoravicius, and Augusto Teixeira,
\emph{Random walk on random walks},
Electronic Journal of Probability,
\textbf{20},(2015),no.~95, 1--35.  
  
\bibitem{HKT20}
Marcelo R. Hil\'ario, Daniel Kious, and Augusto Teixeira,
\emph{Random walk on the simple symmetric exclusion process},
Comm. Math. Phys.
\textbf{379} (2020), no.~1, 61--101. \MR{4152267}

\bibitem{HS15}
Fran\c cois Huveneers and Fran\c cois Simenhaus, 
\emph{Random walk driven by the simple exclusion process},
Electron. J. Probab.
\textbf{20} (2015), Paper no. 105, 42. \MR{3407222}

\bibitem{dHol00}
Frank den Hollander, \emph{Large deviations}, FIM Series, American Mathematical
  Society, 2000.

\bibitem{KKS75}
H.~Kesten, M.~V. Kozlov, and F.~Spitzer, \emph{{A limit law for random walk in
  a random environment}}, Compositio Math. \textbf{30} (1975), 145--168.

\bibitem{Petrov}
V.~V. Petrov, \emph{Sums of independent random variables}, Springer-Verlag, New
  York-Heidelberg, 1975, Translated from the Russian by A. A. Brown, Ergebnisse
  der Mathematik und ihrer Grenzgebiete, Band 82. \MR{0388499}

\bibitem{Sinai}
Ya.~G. Sina\u{\i}, \emph{The limit behavior of a one-dimensional random walk in
  a random environment}, Teor. Veroyatnost. i Primenen. \textbf{27} (1982),
  no.~2, 247--258. \MR{657919}

\bibitem{S75}
Fred Solomon, \emph{Random walks in a random environment}, Ann. Probab.
  \textbf{3} (1975), no.~1, 1--31.

\bibitem{SZ99}
Alain-Sol Sznitman and Martin Zerner, \emph{{A law of large numbers for random
  walks in random environment}}, Ann. Probab. \textbf{27} (1999), no.~4,
  1851--1869. \MR{1742891}

\bibitem{Yon19}
Yongjia {Xie}, \emph{{Functional Weak Limit of Random Walks in Cooling Random
  Environment}}, Electron. Commun. Probab. \textbf{25} (2020), 1--14.

\bibitem{Z04}
O.~Zeitouni, \emph{Random {W}alks in {R}andom {E}nvironment}, Lecture Notes in
Mathematics \textbf{1837} (2004), 1--121.




  

\end{thebibliography}
\end{document}


